\def\S{\mathcal{S}}
\def\T{T\wedge\tau_N^n}
\def\L{\mathcal{L}}
\def\I{\mathcal{I}}
\def\F{\mathcal{F}}
\def\B{\mathcal{B}}
\def\D{\mathcal{D}}
\def\Y{\mathrm{Y}}
\def\E{\mathcal{E}}
\def\P{\mathcal{P}}
\def\G{\mathcal{G}}
\def\T{\mathcal{T}}
\def\Lp{L^{p'}(I';L^p(I))}
\def\X{\mathbb{X}}
\def\V{\mathfrak{B}}
\def\U{\mathcal{U}}
\def\X{\mathcal{X}}
\def\Y{\mathcal{Y}}
\def\Z{\mathcal{Z}}
\def\I{\mathcal{I}}
\def\OMT{\Omega_T}
\def\BV{ B^\theta_{p'}(0,T;B^s_p(\Omega))}
\def\Sf{S_{k,k'}(f)}
\def\Ssf{S^*_{k,k'}(f)}
\def\M{\mathcal{M}}
\def\N{\mathcal{N}}
\def\arraystretch{1.15}
\author{Shiv Mishra \thanks{``Department of Mathematics, Indian Institute of Technology Roorkee, Roorkee 247667, India."\\
		E-mail: {shiv\_m@ma.iitr.ac.in}, {arbaz@ma.iitr.ac.in}. \\
		\textbf{Funding:} ``Shiv Mishra is supported by UGC, India."}
	\and
	Arbaz Khan \footnotemark[1]}
\begin{document}
	\title{A priori error analysis of consistent PINNs for parabolic PDEs}
\date{Date: \today}
\maketitle
\begin{abstract}
	We present a new a priori analysis of a class of collocation methods for parabolic PDEs that rely only on pointwise data of force term \( f \), boundary data \( g \), and initial data \( u_0 \). Under Besov regularity assumptions, we characterize the optimal recovery rate of the solution \( u \) based on sample complexity. We establish error bounds by constructing a new consistent loss function \( \L^* \) that effectively controls the approximation error. This loss incorporates contributions from the interior, boundary, and initial data in a discretized form and is designed to reflect the true PDE structure. Our theoretical results show that minimizing \( \L^* \) yields near-optimal recovery under suitable regularity and sampling. Novel practical variants of \( \L^* \) are discussed, and numerical experiments confirm the framework’s effectiveness.
\end{abstract}

\begin{keywords}
	collocation methods; physics-informed neural networks; optimal recovery; consistent PINNs
\end{keywords}

\begin{AMS}
	35B45, 35K20, 65M15, 41A25
%
\end{AMS}

\section{Introduction}	
The parabolic partial differential equations (PDEs), such as the heat equation and its variants are fundamental in modelling diffusion-dominated phenomena in physics, biology, and engineering. Reliable numerical methods for such problems require not only accurate approximations but also rigorous control over approximation errors. A well-established theory of convergence results is already available for traditional numerical schemes, such as finite difference or finite element methods but  physics-informed neural networks (PINNs) are still under development, have very few theoretical findings and have several challenges, including optimization difficulties, inconsistent residual formulations, and generalization issues. Recently, by using the given physical laws in the loss function of a neural network, PINNs have gained attention as a mesh-free, data-driven approach for solving PDEs and developing a flexible framework to integrate both observed data and model constraints. Their ability to handle complex geometries, sparse data, and high-dimensional problems have made them attractive across various domains of science and engineering \cite{ cuomo2022scientific, karniadakis2021physics}. 

In this paper, we discuss about a parabolic equation with nonhomogeneous Dirichlet boundary condition over the domain $\Omega_T:= \Omega\times (0,T]$, where $\Omega\subset \mathbb{R}^d$ is a bounded domain with a Lipschitz boundary $\partial\Omega$  and $\Sigma := \partial\Omega\times (0,T]$ is its lateral boundary. The model is given by
\begin{align}\label{1.1}
	\begin{cases}
		u_t(x,t)- \Delta u(x,t) &= f(x,t) \quad \text{in } \Omega_T,\\ 
		\hspace{1cm}u(x,t) &= g(x,t) \quad \text{on } \Sigma,\\ 
		\hspace{1cm}u(x,0) &= u_0(x) \quad \text{in} \, \Omega\times \{t=0\},
	\end{cases}
\end{align}
where $f$ is the given force term, $g$ is boundary data and $u_0$ is the initial value of $u$ at $t=0.$ This classical parabolic initial-boundary value problem models time-evolving diffusion processes with prescribed boundary and initial conditions. The primary goal is to accurately approximate the solution \( u \) from given data \( f \), \( g \), and \( u_0 \), which are typically only available at discrete locations in practical applications.

\subsection{Literature survey}
The numerical schemes for solving parabolic PDEs depend on classical methods such as finite difference, finite element, and spectral techniques~\cite{thomee2006galerkin} for a long time. These methods have available stability and convergence theory but are computationally expensive for high-dimensional domains or with complex geometries. 
There are some neural network-based methods like the deep Ritz method~\cite{yu2018deep} and the finite neuron method~\cite{xu2020finite} have been introduced,  which overcome these challenges and aim to approximate PDE solutions using neural architectures with variational principles~\cite{lu2022priori, lu2021priori, muller2022error, siegel2023greedy}. 

Among them, PINNs introduced by Raissi et al.~\cite{karniadakis}, have attracted attention for solving PDEs and showing strong empirical performance~\cite{cai2021physics, cuomo2022scientific, karniadakis2021physics}. However, the theoretical understanding of PINNs remains limited. Recently, there has been some work towards this objective, as shown in references \cite{gazoulis2023stability, shin2023error, zeinhofer2024unified}. Shin et al.~\cite{SYDK} gave convergence results for elliptic PDEs, Mishra et al.~\cite{MSMR} studied generalization using statistical learning tools, and Lu et al.~\cite{lu} proposed a residual minimization framework that includes convergence for parabolic PDEs under Sobolev regularity. Additionally, Akram et al.~\cite{akram2025errorestimatesviscousburgers} derived error bounds for the viscous Burgers equation under various Sobolev norms. Although for analyses exact boundary condition enforcement or require specific architectures~\cite{berg2018unified, jagtap2020conservative} is assumed, and they typically do not provide convergence rates in terms of network parameters or collocation points, especially for deterministic sampling. To develop  the best possible approximations under regularity assumptions~\cite{dahlkeoptimal, krieg2022recovery, novak2006function}, an alternative way is proposed with optimal recovery theory. DeVore et al.~\cite{devore1989optimal, devore1993constructive} established recovery rates under Besov regularity for elliptic PDEs, and proposed regularity framework that has also been extended to parabolic problems in the works of Lunardi and Amann~\cite{MR1242579, amann1997operator, amann2009anisotropic, lunardi2012analytic}.

Recent developments include convergence analysis for elliptic PDEs in terms of both the number of collocation points and the size of the neural network under general Besov smoothness assumptions~\cite{BVPS}. However, there is currently no comprehensive study providing similar error estimates for PINNs applied to parabolic PDEs in this setting. This motivates our work, where we develop a rigorous a priori analysis of consistent PINNs using optimal recovery theory.

\subsection{Principal contributions}
This paper introduces a novel framework of consistent PINNs (CPINNs) for parabolic PDEs, specifically for the heat equation with nonhomogeneous Dirichlet boundary conditions. Unlike existing PINN approaches, under the CPINN framework, we construct a loss function $\L^*$ consistent with the given PDE whose minimization over expressive neural networks achieves near-optimal recovery. To the best of our knowledge, there is no a priori analysis exists for time-dependent PDEs in terms of collocation points under Besov regularity. We are the first ones to provide a rigorous error analysis and establish theoretical recovery guarantees under the Besov model class assumptions in space as well as time for the heat equation. The novelty of this paper lies in characterizing the approximation performance of a function in the space-time domain using Besov model classes and deriving the lower bounds and convergence rates in both interior and boundary norms. Furthermore, we define an error bound for the solution from the proposed consistent loss.
\subsection{Outline of paper}
The rest of the paper is structured as follows: Section \ref{Sec_2} gives Sobolev and Besov spaces with the numerical framework of collocation methods. The approximation properties, followed by the optimal recovery framework with theoretical proofs are given in sections \ref{sec_2} and \ref{sec_3}. In section \ref{Sec_5} we introduce the consistent loss function along with the discrete norms and establish error control and convergence results. Section \ref{Sec_6} provides supporting numerical examples, and section \ref{Sec_7} concludes with a summary and future directions. Additional technical details are provided in the appendix.
\section{Function spaces and numerical framework}\label{Sec_2}
\subsection{Function spaces}
In this section, we first introduce the function spaces required for the analysis. $W^{k,p}(\Omega)$ denotes the standard Sobolev spaces for scalar valued functions with norms $\|\cdot\|_{W^{k,p}(\Omega)}$ for $k \ge 0$ and $1 \le p \le \infty$. For $k = 0$, we write $W^{0,p}(\Omega)=L^p(\Omega)$ with norm $\|\cdot\|_{p}$, and for $p=2$, we write $W^{k,2}(\Omega)=H^{k}(\Omega)$. Furthermore, we define $H^1_0(\Omega) := \{v \in H^1(\Omega) \ |\ v|_{\partial\Omega} = 0\}$, $H^{-1}(\Omega)$ as the dual space of $H^1_0(\Omega)$ and $H^{1/2}(\partial\Omega)$ is defined as the trace of functions in $H^1(\Omega)$.
Now, we introduce the following spaces:
\begin{itemize}
	\item $H^{1/2}(\Omega) := \left\{ v \in L^2(\Omega) \ \middle| \ \int_{\Omega} \int_{\Omega} \frac{|v(x) - v(y)|^2}{|x - y|^{d + 1}} \, dx \, dy < \infty \right\}$.
	\item $L^2(0,T; H^1(\Omega)) := \left\{ u : [0,T] \to H^1(\Omega) \ \middle| \ \int_0^T \|u(t)\|_{H^1(\Omega)}^2 \, dt < \infty \right\}$.
	\item $H^{\frac{1}{2},\frac{1}{4}}(\Sigma) := L^2(0,T;H^{\frac{1}{2}}(\partial\Omega))\cap H^{\frac{1}{4}}(0,T;L^2(\partial\Omega))$.
\end{itemize}
\textbf{Besov space:} 
Let \( 0 < \theta < \infty \), \( 1 \leq p',q' < \infty \), and  $E$ be a Banach space over $\Omega$. Then we introduce Besov spaces  \cite[section~7.2]{AH} on \( J =[0,T] \) as 
\begin{align*}
	\B^{\theta}_{p'q'}(J;E) = \left\{\hspace{-0.5mm} f\hspace{-0.5mm}\in W^{[\theta]}_{q'}(J;E)\hspace{-0.5mm} \mid  |f|_{B^{\theta}_{p'q'}} \hspace{-1mm}= \hspace{-1mm} \Bigg( \hspace{-1mm}\int_0^{T} \hspace{-1.5mm}\bigg( \int_0^{T} \hspace{-0.5mm}\frac{|\Delta^{r'}_{h'} f(t)|_E^{p'}}{h'^{\theta p'}} \, dt \bigg)^{\frac{q'}{p'}} \frac{dh'}{h'} \Bigg)^{\frac{1}{q'}} \hspace{-1mm}< \infty \right\} ,
\end{align*}
where $\Delta^{r'}_{h'}$ denotes forward difference of an integer order ${r'>\theta}$ and step length $h'$ in time. The corresponding norm on space is defined as 
\begin{align}
	\|f\|_{B^\theta_{p'q'}(J;E)} = \|f\|_{W^{[\theta]}_{p'}}+ |f|_{B^{\theta}_{p'q'}}.
\end{align}
For \( q' = \infty \), the integral with respect to \({dh'}/{h'} \) is to be replaced by \( \sup_{0 < h' < T} \) and space is denoted as $B^{\theta}_{p'}(J;E)$.
From the definition of Besov space, we get
\begin{align}
	|f|^{p'}_{B^\theta_{p'}(0,T;B^s_p(\Omega))}
	\asymp \sup_{k'\geq 0}\sup_{k\geq 0} \frac{\omega_{r,r'}(f,2^{-k}\times 2^{-k'})^{p'}_{L^{p'}(0,T;L^p(\Omega))}}{2^{-k'\theta p'}}\ 2^{ksp'},
\end{align}
where \(\omega_{r,r'}(f,b\times b')_{L^{p'}(0,T;L^p(\Omega))}:= \sup_{|h'|\leq b'}\sup_{|h|\leq b}\|\Delta^{r}_h\Delta^{r'}_{h'} f(x,t)\|_{L^{p'}(0,T;L^p(\Omega))}\),  \(b,b'>0\), defined as the modulus of smoothness of $f$. It follows that a function $f\in B^\theta_{p'}(0,T;B^s_p(\Omega))$ if and only if 
\begin{align}
	\omega_{r,r'}(f,2^{-k}\times 2^{-k'})_{L^{p'}(0,T;L^p(\Omega))} \leq 2^{-(ks+k'\theta)p'} 	|f|^{p'}_{B^\theta_{p'}(0,T;B^s_p(\Omega))}.
\end{align}

To ensure the existence and uniqueness of a solution to \eqref{1.1}, we need to apply assumptions on $f, g,$ and $u_0$. Let us suppose  $f \in L^2(0,T;H^{-1}(\Omega)), g \in H^{1/2,1/4}(\Sigma)$ and $u_0 \in L^2(\Omega)$. As discussed in \cite[Section~15.5]{lions1972}, under these assumptions there exists a unique solution $u \in V$ of \eqref{1.1}, where
$$V := \{v\mid v\in L^2(0,T;H^1(\Omega)),\, v_t \in L^2(0,T;H^{-1}(\Omega))\},$$ and $u$ satisfies the estimate
\begin{align}\label{1.3}
	c &\left( \| f \|_{L^2(0,T;H^{-1}(\Omega))} + \| g \|_{ H^{1/2,1/4}(\Sigma)} + \|u_0\|_{L^2(\Omega)}\right) 
	\leq \| u \|_{L^2(0,T;H^1(\Omega))} \\ \notag &+ \|u_t\|_{L^2(0,T;H^{-1}(\Omega))} 
	\leq C  \left( \| f \|_{L^2(0,T;H^{-1}(\Omega))} + \| g \|_{ H^{1/2,1/4}(\Sigma)} + \|u_0\|_{L^2(\Omega)}\right),
\end{align}
where the constants $c, C$ depend on $\Omega$. Thus, for the theoretical loss function
\begin{align}
	\mathcal{L}_T(v) := \| f + \Delta v - v_t\|_{L^2(0,T;H^{-1}(\Omega))} + \| g - v \|_{H^{1/2,1/4}(\Sigma)} + \|u-u_0\|_{L^2(\Omega)},
\end{align}
minimizing this loss over the whole of $V$ has $u$ as its unique solution.
\subsection{Numerical framework of collocation methods and PINNs}
Neural networks are increasingly used as nonlinear approximation spaces for solving PDEs such as \eqref{1.1}. Given a fixed-architecture class \( \N_n \) with \( n \) parameters, an approximate solution \( \hat{u} \in \N_n \) is computed by minimizing a norm \( \| \cdot \|_X \) based on sampled input data \( f \), \( g \), and \( u_0 \) over the space-time domain \( \Omega_T \) as
\begin{align*}
	\begin{cases}
		f = (f_{11},f_{12}, \dots, f_{1\hat m}, f_{21},\dots f_{\tilde m\hat m}), \ f_{i,j} := f(x_i,t_j), \ i = 1, \dots, \tilde m,\ j = 1, \dots \hat{m};\\
		g = (g_{11},g_{12} \dots, g_{1\hat{m}},g_{21},\dots g_{\bar{m}\hat{m}}), \ \hspace{1.6mm}g_{i,j} := g(x_i,t_j), \ i = 1, \dots, \bar{m},\  j = 1, \dots \tilde{m};\\
		u_0= (u_{10}, \dots, u_{\tilde m0}), \quad \hspace{2.3cm}u_i := u(x_i,0), \hspace{2mm}  i= 1, \dots,\tilde  m;  
	\end{cases}
\end{align*}
where \( (x_i, t_j) \in \Omega_T \) represent the sample locations. These points are referred to as data sites and are denoted by
\begin{align*}
	\begin{cases}
		\mathcal{X} &:= \{(x_1,t_1),(x_1,t_2),\dots ,(x_1,t_{\hat m}), \dots (x_{\tilde m}, t_{\hat m})\},\\
		\mathcal{Y} &:= \{(x_1,t_1),(x_1,t_2),\dots ,(x_1,t_{\hat m}), \dots (x_{\bar m}, t_{\hat m})\},\\
		\mathcal{Z} &:= \{(x_1,t_0),(x_2,t_0),\dots ,(x_{\tilde m},t_0)\}.
	\end{cases}
\end{align*}  

PINNs is the numerical procedure to find a $\hat{u}\in \N_n$ which ‘fits the data’. A commonly adopted approach is to define $\hat{u}$ as one of the elements of the set
\begin{align}\label{1.5}
	u \in\mathop{\arg\min}\limits_{S \in \N_n} \mathcal{L}(S), \quad \\\notag
	\text{where,}\ \mathcal{L}(S) := \frac{1}{N_{int}} \sum_{i=1}^{N_{int}} \left[ \Delta S(y_i) + f(y_i) -S_t(y_i)\right]^2 
	&+ \frac{\lambda_1}{N_{sb}} \sum_{i=1}^{N_{sb}} \left[ S(y_i) - g(y_i) \right]^2+\frac{\lambda_2}{N_{tb}} \sum_{i=1}^{N_{tb}} \left[ S(y_i) - u_0(y_i) \right]^2,
\end{align}
with $\lambda_1,\lambda_2$ being tuning parameters, $y_i=(x,t)_i$ be data sites, $N_{int} = \tilde m\times \hat m, N_{sb} = \bar m \times \hat m$, and $N_{tb} = \tilde m$. To simplify the analysis, we will assume $\lambda_1,\lambda_2=1$  throughout the paper. 
We identify a more suitable loss function, \( \mathcal{L}^* : V \to \mathbb{R} \), for use in collocation based methods. The squared value of the loss function is given as 
\begin{align*}
	\mathcal{L}_{sq}^*(v) &:=\frac{1}{\hat m}\sum_{j = 1}^{\hat m}\hspace{-1mm}\left[\frac{1}{\tilde m}\sum_{i = 1}^{\tilde m}|f(x_i,t_j) + \Delta v(x_i,t_j) -v_t(x_i,t_j) |^\gamma \right] ^{\frac{2}{\gamma}}\hspace{-2mm}+ \hspace{-1mm}\frac{1}{\tilde m} \sum_{j=1}^{\tilde{m}} |v(x_j,t_0) - u_0|^2  \\
	& \hspace{-1cm}+   \frac{2}{\bar m\hat{m}} \sum_{j= 1}^{\hat{m}}\sum_{i=1}^{\bar m} \left| g(x_i,t_j)  - v(x_i,t_j)  \right|^2
	+ \frac{1}{\hat{m}\bar{m}^2}\sum_{l=1}^{\hat{m}}\sum_{i\neq j}^{\bar{m}}\frac{ | [g - v](x_i,t_l)- [g - v](x_j,t_l)|^2}{|(x_i,t_l)-(x_j,t_l)|^d} \\
	& \hspace{4.5cm}+  \frac{1}{\hat{m}^2\bar{m}}\sum_{j\neq l}^{\hat{m}}\sum_{i}^{\bar{m}}\frac{ | [g - v](x_i,t_l)- [g - v](x_i,t_l)|^2}{|(x_i,t_j)-(x_i,t_l)|^{3/2}},
\end{align*}
where  \( \gamma \) denotes the smallest number for which the embedding \( L^\gamma(\Omega) \hookrightarrow H^{-1}(\Omega) \) holds. The loss function \( \mathcal{L}^* \) is designed to be consistent with recovering \( u \) in the target space \( V \), making it the appropriate choice for minimization, the resulting method is referred to as consistent PINNs (CPINNs).

\section{Domain decomposition and polynomial interpolation}\label{sec_2}
Functions in Besov spaces can be characterized via piecewise polynomial approximation, which can be efficiently constructed using interpolation. For the domain $\Omega_T = (0,1)^d \times (0,T]$, we define a tensor-product grid $G_{r,r'} := \left\{\left(\frac{j_1}{r-1}, \cdots ,\frac{j_d}{r-1}, \frac{j_{d+1}}{r'-1}\right)\right\}$ over $\bar{\Omega}_T = [0,1]^d\times [0,T],$ where $ j_i\in \{0,1, \cdots ,r-1\}, \ i = 1, \cdots, d, \ j_{d+1} \in \{0,1, \cdots ,r'-1\},$ for $r,r'\in \mathbb{N},\ r,r'>1$ and apply a simplicial (Kuhn-Tucker) decomposition of $\bar{\Omega}_T$ into simplices $E \times I'$, enabling interpolation based approximation using only discrete data $(f_i), (g_i), (u_{0i})$. Let $\T_{k,k'}:= \T_k\times \I_{k'}$ be simplicial decomposition of $\bar{\Omega}_T$.

For any simplex \(E \times I'\), the number of grid points equals the dimension of the polynomial space defined as $\P_{r,r'}^{d+1} := \left\{\sum_{k'<r'}^{}\sum_{|k|_1<r}^{}a_{k,k'}x^kt^{k'}, \ a_{k,k'} \in \mathbb{R},\right\},$ where $x^k:= x_1^{k_1}\cdots x_d^{k_d}, k := (k_1, \dots, k_d), \ k_j\geq 0, \ |k|_1 = \sum_{j = 1}^{d}k_j,$  and  $k'\geq 0.$ 

As discussed in \cite{sudirham2006space}, polynomial interpolation using elements of \(\mathcal{P}_{r,r'}\) at the grid points in a reference simplex yields a bounded projection. Let $\D_k$ and $\I_{k'}$ be decomposition of spatial and time domain in dyadic cubes. For simplex \(E \times I'\in \T_{k,k'}\), obtained via affine transformation from the reference simplex, the Lagrange interpolant
\begin{align*}
	L_{E,I'}(f) \hspace{-0.7mm}:= \hspace{-1mm}\sum_{i,j} f(x_i, t_j)\, \phi_i(x)\, \phi_j(t),\quad \|L_{E,I'}(f)\|_{C(\bar{T} \times \bar{I}')} \hspace{-0.5mm} \leq  \hspace{-0.5mm}\Upsilon_r(E)\Upsilon_{r'}(I') \|f\|_{C(\bar{E} \times \bar{I}')},
\end{align*}
where $ \Upsilon_r:= \|\sum_{i = 1}^{n_r}|\phi_{i,E}(x)|\|_{C(\bar{E})}$, $\Upsilon_{r'}:= \|\sum_{j = 1}^{n_{r'}} |\phi_{j,I'}(t)| \|_{C(\bar{E}, I')}$ are the corresponding Lebesgue constants and $(x_i, t_j) \in G_{r,r'} \cap (E \times I')$. Moreover, for any polynomial \(P \in \mathcal{P}_{r,r'}\), the interpolant satisfies the approximation estimate
\[
\|f - L_{E,I'}(f)\|_{C(\bar{E} \times \bar{I}')} \leq (1 + \Upsilon_r(E)\Upsilon_{r'}(I')) \|f - P\|_{C(\bar{E} \times \bar{I}')}.
\]

Let $S^*_{k,k'}(f)$ denote the piecewise polynomial interpolant defined by
\begin{align}\label{2.16}
	S^*_{k,k'}(f):= \sum_{E\times I'\in\T_{k,k'}}^{}L_{E,I'}(f)\chi_E\chi_{I'}:=\sum_{I'\in\I_{k'}}^{}\sum_{I\in\D_{k}}^{}L_{I,I'}(f)\chi_I\chi_{I'},
\end{align}
where \( \chi_E,\,  \chi_{I'}\) are the associated characteristic functions. The following theorem estimates the accuracy of \(S^*_{k,k'}\).
\begin{theorem},\label{$thm_2.2$}
	Let \(f\in \mathfrak{B} := B^{\theta}_{p'}(0,T;B^s_p(\Omega))\)  with \(s>d/p,\, \theta>1/p'\)  and \(S^*_{k,k'}\) be the interpolation operator. Then
	\begin{enumerate}[(i)]
		\item The approximation error in $L^{\tau'}(0,T;L^\tau(\Omega))$ norm for \(1\leq p \leq \tau,\ 1\leq p' \leq \tau'\),
		\begin{align}\label{2.17}
			\|f-S^*_{k,k'}(f)\|_{L^{\tau'}(0,T;L^\tau(\Omega))}\leq C|f|_{ \mathfrak{B} }\,2^{-(k(s-\frac{d}{p}+ \frac{d}{\tau}) +k'(\theta -\frac{1}{p'}+\frac{1}{\tau'}))},
		\end{align}
		\item For \(1 \leq p, p' \leq 2\), the approximation error in \(L^2(0,T;H^1(\Omega))\) norm satisfies,
		\begin{align}\label{2.18}
			\|f-S^*_{k,k'}(f)\|_{L^2(0,T;H^1(\Omega))}\leq C|f|_{ \mathfrak{B} }\,2^{-\big(k(s-1-\frac{d}{p}+ \frac{d}{2})+k'(\theta -\frac{1} {p'}+\frac{1}{2})\big)},
		\end{align}
		\item For \(1\leq p,p'\leq \infty,\)  there is a continuous representative \(\tilde f \in  \text{Lip} (s-d/p, \theta-1/p')\) such that \(f=\tilde f\ a.e.\). i.e.,
		\begin{align}\label{2.19}
			\omega_{r,r'}(\tilde f, b\times b')_{C(\OMT)}\leq C|f|_{ \mathfrak{B}}\,b^{(s-\frac{d}{p})}b'^{(\theta-\frac{1}{p'})}, \quad b,b'>0,
		\end{align}
	\end{enumerate}
	where $C$ be a constant independent from \(f, k\) and $k'$.
\end{theorem}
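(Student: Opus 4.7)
My plan is to reduce all three statements to local polynomial approximation on each element of $\T_{k,k'}$ and then assemble global bounds using the modulus-of-smoothness characterization of the Besov seminorm stated right after the definition of $\mathfrak{B}$ in Section \ref{Sec_2}.

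For part (i), on a single space-time piece $E\times I'$ of diameters $\lesssim 2^{-k}$ and $\lesssim 2^{-k'}$, I exploit the polynomial reproduction of $L_{E,I'}$: for any $P\in\P_{r,r'}^{d+1}$, $f-L_{E,I'}(f)=(f-P)-L_{E,I'}(f-P)$. Combining Whitney's theorem (the best $L^{p'}(I';L^p(E))$ polynomial approximation is bounded by the mixed modulus $\omega_{r,r'}(f,2^{-k}\times 2^{-k'})_{L^{p'}(I';L^p(E))}$) with a Nikolskii/Markov inequality applied to the polynomial residue $L_{E,I'}(f-P)$ to change integrability from $L^{p'}(L^p)$ to $L^{\tau'}(L^\tau)$ yields a local bound with the expected volume factors $|E|^{1/\tau-1/p}|I'|^{1/\tau'-1/p'}$. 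Summing over the partition (using the standard sub-additivity of the local moduli after raising to the $p'$-th power) and plugging in the global Besov estimate $\omega_{r,r'}(f,2^{-k}\times 2^{-k'})_{L^{p'}(L^p)}\leq 2^{-(ks+k'\theta)}|f|_{\mathfrak{B}}$ gives exactly the claimed rate $2^{-(k(s-d/p+d/\tau)+k'(\theta-1/p'+1/\tau'))}$.

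Part (ii) follows the same template with $\tau=\tau'=2$, except that an extra spatial derivative must be taken. Locally, $\nabla(f-L_{E,I'}(f))$ is bounded by the best-polynomial error $\|f-P\|_{H^1(E\times I')}$ (estimated via the $H^1$-valued Whitney inequality, which lowers the spatial smoothness by one) plus a Markov inverse inequality $\|\nabla L_{E,I'}(f-P)\|_{L^2(E)}\lesssim 2^k\|L_{E,I'}(f-P)\|_{L^2(E)}$ on the polynomial residue. Both contributions produce the same extra factor $2^k$, yielding $k(s-1-d/p+d/2)+k'(\theta-1/p'+1/2)$ as stated. Part (iii) is a consequence of the same equivalence: since $s-d/p>0$ and $\theta-1/p'>0$, a tensor-product Besov embedding gives a continuous representative $\tilde f$, and the mixed sup-norm modulus is controlled by the $L^{p'}(L^p)$ modulus at the price of a Sobolev/Marchaud gain $b^{d/p}b'^{1/p'}$ in the shifts; combined with the Besov bound above this yields $b^{s-d/p}b'^{\theta-1/p'}$.

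The main obstacle is the tensor-product (mixed) modulus of smoothness: the Whitney and Nikolskii inequalities must be applied jointly in $(x,t)$ rather than iteratively, since iterating loses one variable's smoothness when the other is treated first. Concretely, one needs a joint mixed Whitney inequality $\inf_{P\in\P_{r,r'}^{d+1}}\|f-P\|_{L^{p'}(I';L^p(E))}\lesssim\omega_{r,r'}(f,\text{diam}(E)\times|I'|)_{L^{p'}(I';L^p(E))}$, which is the tensor-product extension of the classical one-variable Whitney theorem and requires a careful mixed-difference argument. Once this is in place, the bookkeeping across elements is standard.
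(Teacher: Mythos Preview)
Your single-scale local argument for (i) has a genuine gap. The Lagrange operator $L_{E,I'}$ is only bounded $C\to C$; there is no estimate $\|L_{E,I'}(g)\|_{L^{p'}(I';L^p(E))}\lesssim\|g\|_{L^{p'}(I';L^p(E))}$ that you could feed into a Nikolskii step. After splitting $f-L_{E,I'}(f)=(f-P)-L_{E,I'}(f-P)$, the only control on the second piece is $\|L_{E,I'}(f-P)\|_\infty\lesssim\|f-P\|_\infty$, and the sup-norm of $f-P$ on $E\times I'$ is \emph{not} bounded by the local $L^{p'}(L^p)$ modulus at the single scale $2^{-k}\times 2^{-k'}$. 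The non-polynomial piece $f-P$ is equally problematic: it is not a polynomial, so Nikolskii does not supply the volume factor $|E|^{1/\tau-1/p}|I'|^{1/\tau'-1/p'}$ you claim, and H\"older runs the wrong way when $\tau\ge p$.

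The paper resolves this by telescoping across all finer dyadic levels. For the best-approximant $S_{k,k'}$ it writes $f-S_{k,k'}=\sum_{j>k}\sum_{j'>k'}R_{j,j'}$ with each $R_{j,j'}$ piecewise polynomial on $\D_j\times\I_{j'}$; Nikolskii then applies legitimately to $R_{j,j'}$, and the geometric sums converge precisely because $s>d/p$, $\theta>1/p'$. The comparison $S_{k,k'}-S^*_{k,k'}$ is handled by a second telescoping that converts local $L^{p'}(L^p)$ errors into the pointwise bound $\|f-P_{I,I'}\|_{L^\infty(I\times I')}\lesssim\sum_{j\ge k}\sum_{j'\ge k'}2^{jd/p+j'/p'}\tilde w_{r,r'}(f,2^{-j}\times2^{-j'})_{L^{p'}(I';L^p(I))}$; this is exactly the bridge from $L^p$-moduli to the sup-norm (required because interpolation samples point values) that your outline skips. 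Part (ii) is then obtained by the same mechanism: the inverse inequality is applied to the telescoping pieces $R^*_{j,j'}:=S^*_{j,j'}-S^*_{j-1,j'}-S^*_{j,j'-1}+S^*_{j-1,j'-1}$, which are genuinely piecewise polynomial, and the resulting series is summed in $L^2(0,T;H^1(\Omega))$. Your proposed ``$H^1$-valued Whitney'' on the non-polynomial term $f-P$ is not available under the sole hypothesis $f\in\mathfrak{B}$.
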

\begin{proof}
	\textbf{Case (i):}
	By Theorem \ref{thm_12.2}, we need to prove only
	\begin{align}\label{12.26}
		\|\Sf - \Ssf\|_{L^{\tau'} (0,T;L^\tau(\Omega))} \hspace{-1mm}\leq  \hspace{-0.9mm}C \|f\|_{ \mathfrak{B} } 2^{-k(s - \frac{d}{p} + \frac{d}{\tau})- k'(\theta-\frac{1}{p}+\frac{1}{\tau'})},
	\end{align}
	where \( C \) depends on \( r, r', d \) but not on \( f, k, k' \). For each \( I \times I' \in \D_k \times \I_{k'} \), let \( P_{I,I'} \in \P_{r,r'} \) be the best \( L^p \) approximation to \( f \) and define \( \Sf = \sum_{I', I} P_{I,I'} \chi_I \chi_{I'} \). Then for \( E \subset I \),
	\begin{align*}
		\|\Ssf - \Sf\|_{L^\infty(E\times I')} \leq \|L_{E,I'} (f - P_{I,I'})\|_{L^\infty(E\times I')}\leq C \|f - P_{I,I'}\|_{L^\infty(I\times I')}.
	\end{align*}
	Thus, by Theorem \ref{thm_12.2}, we get
	\begin{align}\label{12.28}
		\|\Ssf - \Sf\|_{L^\infty(I\times I')}&\leq C \|f-P_{I,I'}\|_{L^\infty(I\times I')}\notag\\&\leq C \|f\|_{ \mathfrak{B} } 2^{-\left(k(s - \frac{d}{p})+k' (\theta- \frac{1}{p'})\right)},
	\end{align}
	hence 
	\[
		\|\Ssf - \Sf\|_{L^\infty(\OMT)} \leq C \|f\|_{ \mathfrak{B} } 2^{-\left(k(s - \frac{d}{p})+k' (\theta- \frac{1}{p'})\right)},
	\]
		proving \eqref{12.26} for \( \tau = \infty \). To extend to \( p \leq \tau < \infty \), \( p' \leq \tau' < \infty \), fix \( (x,t) \in I \times I' \in \D_k \times \I_{k'} \) and let \( J_j \times J_{j'} \) denote dyadic cubes containing \( (x,t) \). Define \( Q_{j,j'} := P_{J_{j+1},J_{j'+1}} - P_{J_j,J_{j'}} \). Then, as in \eqref{12.21},
	\begin{align}\label{12.30}
		|f(x,t) - P_{I,I'} (x,t)| &\leq \sum_{j' \geq k'}\sum_{j \geq k} \|Q_{j.j'}\|_{L^\infty(J_{j'+1}\times J_{j+1})}\notag \\ 
		&\leq C  \sum_{j' \geq k'} \sum_{j \geq k} 2^{\frac{jd}{p}+\frac{j'}{p}} \|f - P_{J_j,J_{j'}}\|_{L^{p'}(J_{j'};L^p(J_j))}.
	\end{align}
	Using the modulus of smoothness, this yields
	\begin{align}\label{12.31}
		\|f-P_{I,I'}\|_{L^\infty(I\times I')}\leq C\sum_{j' \geq k'}\sum_{j \geq k}2^{\frac{jd}{p}+\frac{j'}{p'}}\tilde \omega_{r,r'}(f,2^{-j}\times 2^{-j'})_{\Lp}.
	\end{align}
	Then, for \( \beta \in (0, s - \frac{d}{p}) \), \( \beta' \in (0, \theta - \frac{1}{p'}) \), and from \eqref{12.28} and \eqref{12.31},
	\begin{align}\label{12.35}
		\|\Sf -& \Ssf\|_{L^\infty(I\times I')}\leq C\|f - P_{I,I'}\|_{L^\infty(I\times I')} \leq C  
		\Bigg\{  \sum_{j' \geq k'}\sum_{j \geq k} 2^{-(j\beta+j'\beta ') q'} \Bigg\}^{ \frac{1}{q'}}  \notag\\
		&\Bigg\{  \sum_{j' \geq k'}\sum_{j \geq k} \left[ 2^{\left(j(\frac{d}{p'}+\beta)+j'(\frac{d}{p'}+\beta ')\right)} \tilde w_{r,r'} (f, 2^{-j}\times  2^{-j})^{p'}_{\Lp} \right]^{p'} \Bigg\}^{ \frac{1}{p'}}.
	\end{align}
	Now summing over all \( I \times I' \in \D_k \times \I_{k'} \), and using subadditivity of \( \tilde\omega \), we get
	\begin{align*}
		\|\Sf - \Ssf\|^{p'}_{L^{p'}(0,T';L^p(\Omega))} &\leq C\zeta\sum_{I' \in \I_{k'}}\sum_{I \in \D_k}\|\Sf - \Ssf\|^{p'}_{L^\infty(I\times I'))} \\
		&\leq C\ \zeta \sum_{j \geq k} \sum_{j \geq k} 2^{(jd+j')} w_{r,r'} (f, 2^{-j}\times 2^{-j'})^{p'}_{L^p (0,T;L^p(\Omega))} \\ 
		&\leq C |f|^{p'}_{\mathfrak{B} } \zeta \sum_{j' \geq k'}\sum_{j\geq k} 2^{(jd+j')} 2^{-(js+j'\theta)p'} 
		\leq C |f|^{p'}_{\mathfrak{B} } 2^{-(ks+k'\theta)p'},
	\end{align*}
	where \( \zeta = 2^{-(kd + k')} \) . This proves \eqref{12.26} for \( \tau = p, \tau' = p' \). For general \( \tau \in (p,\infty), \tau' \in (p',\infty) \)
	\begin{align}\label{12.33}
		|\Sf - \Ssf|^{\tau'} &\leq |\Sf - \Ssf|^{\tau' - p'} |\Sf - \Ssf|^{p'}\notag \\ 
		&\hspace{-0.1cm}\leq C |f|^{\tau' - p'}_{\mathfrak{B}} 2^{-[k(s - \frac{d}{p})+k'(\theta-\frac{1}{p'})](\tau' - p')} |\Sf - \Ssf|^{p'}.
	\end{align}
	Integrating this expression completes the proof
	\begin{align}\label{12.34}
		\|\Sf - \Ssf\|^{\tau'}_{L^{\tau'} (0,T;L^\tau(\Omega))} \leq C |f|^{\tau'}_{\mathfrak{B}} 2^{-[k(s - \frac{d}{p} + \frac{d}{\tau})+ k'(\theta- \frac{1} {p'} + \frac{1}{\tau'})] \tau'}.
	\end{align}
	
	\textbf{Case (ii):}
	Since \( B^\theta_{p'q'}(0,T;B^s_{pq}(\Omega)) \subset B^\theta_{p'}(0,T;B^s_p(\Omega)) := B^\theta_{p'\infty}(0,T;B^s_{p\infty}(\Omega)) \), it is enough to prove the result for \( f \in \BV \), i.e., when \( q = q' = \infty \). From case (i), we know for $k,k' \geq 0$
	\begin{align}
		\|f - \Ssf\|_{L^2(0,T;L^2(\Omega))} \leq C |f|_{\mathfrak{B}}2^{-k(s - \frac{d}{p} + \frac{d}{2}) - k'(\theta - \frac{1}{p'} + \frac{1}{2})}.\label{12.37}
	\end{align}
	Define
	\begin{align}\label{12.38}
		R^*_{k,k'}(f) := \Ssf - S^*_{k-1,k'}(f) - S^*_{k,k'-1}(f) + S^*_{k-1,k'-1}(f), \quad k,k'\geq 1.
	\end{align}
	Then, by \eqref{12.37},
	\begin{align}\label{12.39}
		\|R^*_{k,k'}(f)\|_{L^2(0,T;L^2(\Omega))} \leq C |f|_{\mathfrak{B}} 2^{-k(s - \frac{d}{p} + \frac{d}{2}) - k'(\theta - \frac{1}{p'} + \frac{1}{2})}.
	\end{align}
	Since \( R^*_{k,k'}(f) \) is piecewise polynomial on each \( T \times I' \in \T_{k,k'} \), inverse inequalities yield
	\begin{align*}\label{12.41}
		\|R^*_{k,k'}(f)\|_{L^2(0,T;H^1(\Omega))} &\leq C 2^k \|R^*_{k,k'}(f)\|_{L^2(0,T;L^2(\Omega))}\\&\leq C |f|_{\mathfrak{B}} 2^{-k(s - 1 - \frac{d}{p} + \frac{d}{2}) - k'(\theta - \frac{1}{p'} + \frac{1}{2})}.
	\end{align*}
	Finally, since
	\[
	f - S^*_{k,k'}(f) = \sum_{j>k} \sum_{j'>k'} R^*_{j,j'}(f),
	\]
	for \( s - \frac{d}{p} + \frac{d}{2} > 1 \), \( \theta - \frac{1}{p'} + \frac{1}{2} > 0 \), the series converges in \( L^2(0,T;H^1(\Omega)) \).
	
	\textbf{Case (iii):}
	Let \( f \in \BV \) and \( \tilde f \) as in \eqref{12.23}, with \( f = \tilde f \) a.e. by Theorem~\ref{thm_12.2}. Consider local cubes \( J \times J' \subset \Omega_T \) of size \( \ell_J \leq rb, \ \ell_{J'} \leq r'b' \). Using approximation as in \eqref{12.22} and \eqref{12.23}, there exists \( P_{J,J'} \in \P_{r,r'} \) such that
	\[
	|\tilde f(x,t) - P_{J,J'}(x,t)| \leq C |f|_{\mathfrak{B}} b^{s - \frac{d}{p}} b'^{\theta - \frac{1}{p'}}.
	\]
	It follows that  
	\[
	|\Delta^r_h \Delta^{r'}_{h'} \tilde f(x,t)| \leq C |f|_{\mathfrak{B}} b^{s - \frac{d}{p}} b'^{\theta - \frac{1}{p'}},
	\]
	uniformly in \( (x,t) \), which proves continuity and \eqref{2.19}.
\end{proof}

\section{Optimal Recovery}\label{sec_3}
In this section, we examine whether the data \( (f, g, u_0) \) at fixed sites \( \X, \Y, \Z \) are sufficient to approximate \( u \) in the \( V \) norm within an accuracy \( \varepsilon \). This leads to an optimal recovery (OR) problem, with the answer depending on the number and placement of the data points, and the regularity of \( f, g, u_0 \). We assume that \( f, g, u_0 \) are continuous and lie in Besov spaces that compactly embed into spaces of continuous functions, ensuring \( u \in V \). Specifically, if \( U(\B) \) denotes the unit ball in the Besov space \( \B \), we assume
\begin{align*}
	u_0 \in \mathcal{M} &:= U(\B), \quad B = B^{ \check{s}}_{ \check{q}}(L^{ \check{p}}(\Omega)), \quad 1 \leq  \check{q},  \check{p} \leq \infty, \quad  \check{s} > d/ \check{p},\\
	f \in \F &:= U(\B), \quad \B = B^\theta_{p'q'}(0,T; B^s_{pq}(\Omega)),\ \ s > d/p, \, \theta > 1/p',\  p, q, p', q' \in [1,\infty],\\
	g \in \mathcal{G} &:=\text{Tr}(U(\B)), \quad B :=B^{\bar \theta}_{\bar p'\bar q'}( 0,T;B^{\bar{s}}_{\bar p\bar q}(\partial\Omega)),\ \bar{s} > d/\bar{p},\ \bar{\theta} > 1/\bar{p}', \ 1 \leq \bar{p}, \bar{q} \leq 2,
\end{align*}
with $1 \leq  \bar q', \bar p' \leq \infty $ so that $\M$ compactly embeds into $L^2(\Omega)$, \( \F \) compactly embeds into \( C(\Omega_T),\) and $\mathcal{G}$ as compact subset of $H^{1/2,1/4}(\Sigma)$ embeds into $C(\Sigma)$. 
According to these presumptions, the function $u$ that we wish to numerically approximate is a part of the set
\[
\mathcal{U} := \left\{ u \in C(\Omega) \, : \, u \text{ satisfies } \eqref{1.1} \text{ with } f \in \mathcal{F}, \, g \in \mathcal{G} \text{ and } u_0 \in\M \right\}.
\]
Given the data \( f := (f_1, \dots, f_{N_{\text{int}}}) \), \( g := (g_1, \dots, g_{N_{\text{sb}}}) \), and \( u_0 := (u_1, \dots, u_{N_{\text{tb}}}) \), the available information about the functions \( f \in \mathcal{F} \), \( g \in \mathcal{G} \), and \( u_0 \in \mathcal{M} \) is encoded in the corresponding data sets
\begin{align*}
	\begin{cases}
		\mathcal{F}_{\text{data}} &:= \left\{ f \in \mathcal{F} \, : \, f_i = \hat{f}_i, \, i = 1, \dots, N_{\text{int}} \right\}, \\
		\mathcal{G}_{\text{data}} &:= \left\{ g \in \mathcal{G} \, : \, g_i = \hat{g}_i = u(z_i), \, i = 1, \dots, N_{\text{sb}} \right\}, \\
		\mathcal{M}_{\text{data}} &:= \left\{ u_0 \in \mathcal{M} \, : \, u_{i,0} = \hat{u}_{i,0}, \, i = 1, \dots, N_{\text{tb}} \right\}.
	\end{cases}
\end{align*}
Thus,  the target function $u$ lies within the set
\begin{align*}
	\mathcal{U}_{\text{data}} &:= \left\{ u \in \mathcal{U} \, \middle| \, 
	\begin{aligned}
		u_t - \Delta u(y_i) &= \hat{f}_i, && i = 1, \dots, N_{int}, \\
		\hspace{-2mm}u(z_i) &= \hat{g}_i, && i = 1, \dots, N_{sb}, \\
		\hspace{-2mm}u(x_i) &= \hat{u}_{i,0}, && i = 1,\dots,N_{tb}.
	\end{aligned}
	\right\}\\
	&:= \left\{ u: u \text{ is a solution to } \eqref{1.1} \text{ with } f \in \mathcal{F}_{\text{data}}, \, g \in \mathcal{G}_{\text{data}}, u_0 \in \mathcal{M}_{\text{data}} \right\}.
\end{align*}
We aim to determine how accurately the condition \( u \in \mathcal{U}_{\text{data}} \) characterizes or determines the function \( u \).
The recovery rate depends not only on the data sites \( \mathcal{X}, \mathcal{Y}, \mathcal{Z} \), but also on the values \( (f, g, u_0) \) assigned at those sites. To obtain uniform estimates, we fix the data sites and define the \textit{uniform optimal recovery rate} as
\[
R^*(\mathcal{U}, \mathcal{X}, \mathcal{Y}, \mathcal{Z})_X := \sup_{u \in \mathcal{U}} R(\mathcal{U}_{\text{data}}(u))_X,
\]
which measures the worst-case error over the class \( \mathcal{U} \). If we prescribe a data budget \( m = |\mathcal{X}| + |\mathcal{Y}| + |\mathcal{Z}| \), the optimal recovery error under this constraint is
\[
R^*_m(\mathcal{U})_X := \inf_{\substack{\mathcal{X} \subset \Omega_T, \, \mathcal{G} \subset \Sigma,\, \mathcal{Z} \subset \Omega_0 \\ |\mathcal{X}| + |\mathcal{Y}| + |\mathcal{Z}| = m}} R^*(\mathcal{U}, \mathcal{X}, \mathcal{G}, \mathcal{Z})_X, \quad m \geq 2.
\]
Similarly, for recovering \( f \in \mathcal{F} \), we define
\[
R^*(\mathcal{F}, \mathcal{X})_X := \sup_{f \in \mathcal{F}} R(\mathcal{F}_{\text{data}}(f))_X, \quad 
R^*_m(\mathcal{F})_X := \inf_{\mathcal{X} \subset \Omega_T, \, |\mathcal{X}| = N_{int}} R^*(\mathcal{F}, \mathcal{X})_X.
\]
Analogous definitions apply for the recovery of \( g \) and \( u_0 \). All of these optimal recovery errors depend on the norm \( \| \cdot \|_X \) used to measure the error.

\subsection{Optimal recovery of $f$} 
Let \( \F := U(\B) \), where \( \B = B^{\theta}_{p'q'}(0,T;B^s_{pq}(\Omega)) \) with \( 1 \leq p, q \leq \infty \), \( s > d/p \), and \( \theta > 1/p' \). Define the tensor product grid \( G^{r,r'}_{k,k'} \subset \bar{\Omega}_T = [0,1]^d \times [0,T] \) for \( r, r' \geq 2 \) as
\begin{align}\label{3.8}
	G^{r,r'}_{k,k'}:= \{y_1, \cdots , y_m\}\subset \bar \Omega_T, m = (r2^k)^d(r'2^{k'}),
\end{align}  
with spatial and temporal spacing \( h = 2^{-k}(r-1)^{-1} \), \( h' = 2^{-k'}(r'-1)^{-1} \), respectively gives  the optimal rate \( R^*(\F, \X)_X \asymp m^{-\alpha_X} \). 

\begin{theorem}\label{thm_3.1}
	Let \( \Omega_T = (0,1)^d \times (0,T] \) and \( \F := U(\B) \), where \( \B\) is any Besov space \(B^{\theta}_{p'q'}(0,T; B^s_{pq}(\Omega)) \) with \( s > d/p,\ \theta > 1/p' \), and \( 1 \leq p, q, p', q' \leq \infty \). Then for \( m = \tilde{m} \times \hat{m} \), the optimal recovery rate \( R^*_m(\F)_X \) satisfies:
	\begin{enumerate}[(i)]
		\item For \( X = C(\Omega_T) \): 
		\begin{align}\label{3.9}
			R^*_m(\F)_X \asymp \tilde{m}^{-\alpha_C} \hat{m}^{-\beta_C},\quad \alpha_C := \frac{s}{d} - \frac{1}{p},\ \beta_C := \theta - \frac{1}{p}.
		\end{align}
		\item For \( X = L^{\tau'}(0,T; L^\tau(\Omega)) \), \( \tau, \tau' \geq 1 \): 
		\begin{align}\label{3.10}
			R^*_m(\F)_X \asymp \tilde{m}^{-\alpha_\tau} \hat{m}^{-\beta_\tau},\quad \alpha_\tau := \frac{s}{d} - \left[\frac{1}{p} - \frac{1}{\tau} \right]_+,\ \beta_\tau := \theta - \left[\frac{1}{p} - \frac{1}{\tau'} \right]_+.
		\end{align}		
		\item For \( X = L^2(0,T; H^1(\Omega)) \), \( 1 \leq p \leq 2 \) and \(1\leq p' \leq \infty\): 
		\begin{align}\label{3.11}
			R^*_m(\F)_X \asymp \tilde{m}^{-\alpha_H} \hat{m}^{-\beta_H},\ \alpha_H := \frac{s - 1}{d} - \left[\frac{1}{p} - \frac{1}{2} \right],\, \beta_H := \theta - \left[\frac{1}{p} - \frac{1}{2} \right].
		\end{align}
		\item For \( X = L^2(0,T; H^{-1}(\Omega)) \), \( 1 < p, p' \leq \infty \): 
		\begin{align}\label{3.12}
			R^*_m(\F)_X \asymp \tilde{m}^{-\alpha_{-1}} \hat{m}^{-\beta_H},\quad \alpha_{-1} := \frac{s}{d} - \left[\frac{1}{p} - \frac{1}{\delta} \right]_+,\ \frac{1}{\delta} := \frac{1}{2} + \frac{1}{d}.
		\end{align}
	\end{enumerate}
	The constants of equivalence are independent of \( m \).
\end{theorem}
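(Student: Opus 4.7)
Every claim asserts an asymptotic $R^*_m(\F)_X\asymp \tilde m^{-\alpha}\hat m^{-\beta}$, so I would prove an upper bound via an explicit sampling/reconstruction scheme and a matching lower bound valid for every placement of $m$ samples. The four target norms are handled in parallel, with the $H^{-1}$ case reduced to an $L^\delta$ case by Sobolev embedding.

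\textbf{Upper bound.} Take the tensor-product grid $G^{r,r'}_{k,k'}$ from \eqref{3.8} with $\tilde m=(r2^k)^d$, $\hat m=r'2^{k'}$, and recover $f$ by the interpolant $S^*_{k,k'}(f)$ of \eqref{2.16}. In the range $\tau\geq p$, $\tau'\geq p'$, case~(i) is an immediate consequence of \eqref{2.17} once one substitutes $2^k\asymp\tilde m^{1/d}$, $2^{k'}\asymp\hat m$; when $\tau<p$ or $\tau'<p'$, the embedding $L^p\hookrightarrow L^\tau$ on the bounded factor collapses the corresponding term and produces the $[1/p-1/\tau]_+$ cutoff. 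Case~(iii) is \eqref{2.18} read off directly. Case~(iv) follows by combining case~(ii) at $\tau=\delta$, $\tau'=2$ with the Sobolev embedding $L^\delta(\Omega)\hookrightarrow H^{-1}(\Omega)$, $1/\delta=1/2+1/d$.

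\textbf{Lower bound via bump functions.} Fix any $\X\subset\bar\Omega_T$ with $|\X|=\tilde m\hat m$ and partition $\bar\Omega_T$ into $\asymp\tilde m\hat m$ product cells of size $h\times h'$ with $h\asymp\tilde m^{-1/d}$, $h'\asymp\hat m^{-1}$. In the \emph{single-bump regime} $\tau\geq p$, $\tau'\geq p'$, pigeonhole produces at least one empty cell $I\times I'$ in which I place a tensor-product bump
\[
\Phi(x,t)\;=\;c\,\varphi\!\left(\tfrac{x-x_I}{h}\right)\psi\!\left(\tfrac{t-t_{I'}}{h'}\right),\qquad c=h^{s-d/p}(h')^{\theta-1/p'},
\]
with fixed smooth $\varphi,\psi$, so that $\|\Phi\|_{\V}\asymp 1$ by the tensor-product Besov scaling. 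Since $\pm\Phi\in\F$ agree on $\X$ (both vanish identically there), any recovery errs by at least $\|\Phi\|_X$, which direct scaling identifies as $h^{s-d/p+d/\tau}(h')^{\theta-1/p'+1/\tau'}$ for (i),(ii), with the analogous $H^1$ modification for (iii) and a duality step for the $H^{-1}$-case (iv). Substituting $h,h'$ recovers the upper-bound exponents.

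\textbf{Main obstacle.} The delicate regime is $\tau<p$ (or $\tau'<p'$), where a single bump decays too fast and yields a lower bound strictly \emph{weaker} than the stated $\tilde m^{-s/d}$ (resp.\ $\hat m^{-\theta}$). The remedy is a \emph{multi-bump} construction: with the cell scales fixed as above, at least $N\asymp\tilde m\hat m$ product cells remain empty, and I set $\Phi=\sum_{J\in\mathcal{J}}\phi_J$ as a sum of the same template bump placed in all of them. Disjointness of supports at a common scale lets the tensor-product Besov seminorm add in $\ell^p\otimes\ell^{p'}$ while the $L^{\tau'}(L^\tau)$ norm adds in $\ell^{\tau'}\otimes\ell^\tau$; normalizing $\|\Phi\|_{\V}\asymp 1$ introduces a factor $N_s^{-1/p}N_t^{-1/p'}$, and evaluating $\|\Phi\|_X$ produces the additional $N_s^{1/\tau-1/p}N_t^{1/\tau'-1/p'}$. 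With $N_s\asymp\tilde m$, $N_t\asymp\hat m$ the exponents telescope to $\tilde m^{-s/d}\hat m^{-\theta}$, exactly matching $\alpha_\tau,\beta_\tau$; in the mixed regimes one chooses $N_s=1$ or $N_t=1$ to interpolate between single- and multi-bump rates. The one genuinely technical step is verifying that the tensor-product Besov seminorm of a disjoint-support same-scale family factors cleanly across the space and time variables; once that is in place, the remaining bookkeeping is direct scaling together with an appeal to Theorem~\ref{$thm_2.2$}.
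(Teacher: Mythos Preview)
Your plan matches the paper's proof almost exactly: upper bounds via the interpolant $S^*_{k,k'}$ and Theorem~\ref{$thm_2.2$}, lower bounds via a single scaled bump in an empty cell when $\tau\ge p,\tau'\ge p'$, and a multi-bump sum in the complementary regime. Two points deserve attention.

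First, for the multi-bump Besov control you propose to use $\ell^p\otimes\ell^{p'}$ additivity of the seminorm directly; the paper instead reduces to $p=p'=\infty$ (using $U(B^s_{\infty q})\subset U(B^s_{pq})$ on a bounded domain) and then bounds $\omega_{r,r'}(\eta,b\times b')_\infty$ by hand, cf.~\eqref{12.63}--\eqref{12.64}. Your route is cleaner in spirit but the ``tensor-product additivity'' you flag as the technical step is genuinely delicate for the mixed space $B^\theta_{p'}(0,T;B^s_p(\Omega))$, because the empty cells $I\times I'$ need not form a product set $\mathcal J_s\times\mathcal J_t$; the paper's $p=\infty$ reduction sidesteps this entirely.

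Second, there is a real gap in case~(iv) when $p>\delta$. Your multi-bump argument is written for $L^{\tau'}(L^\tau)$ targets, and the embedding $L^\delta\hookrightarrow H^{-1}$ only gives \emph{upper} bounds on the $H^{-1}$ norm, so you cannot read off the $H^{-1}$ lower bound from the $L^\delta$ one. The paper handles this sub-case separately (see \eqref{12.71}): with the multi-bump $\eta\ge 0$ normalized in $B^s_{\infty q}$, one pairs against a single \emph{global} test function $v\in H^1_0$ that is bounded below on $\Omega_0$, obtaining $\|\eta\|_{L^2(0,T;H^{-1})}\gtrsim\int\eta\, v\gtrsim\|\eta\|_{L^1(\Omega_0\times[T/4,3T/4])}$, after which the $L^1$ mass of the bump sum gives $\tilde m^{-s/d}\hat m^{-\theta}$. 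You should add this step.
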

%
\begin{proof}
	We present the proof for \( d \geq 2 \) for simplicity, it is divided into two parts, upper and lower bound.
	
	\textbf{Proof of upper bounds:} 
	It is sufficient to prove the upper bounds for \( \V = B^\theta_{p'}(0,T;B^s_p(\Omega))\). We construct \( m \) data sites as tensor grids \( G^{r,r'}_{k,k'} \subset \bar\Omega_T \). For other \( m \), monotonicity of \( R^*_m \) ensures the bounds hold.\\
	\textbf{Cases (i) and (ii):} Using the simplicial partition \( \T_{k,k'}(\Omega_T) \), we build from the data \( (f_i) \) a continuous piecewise polynomial approximant, as shown previously.
	\begin{align}\label{12.42}
		S = S^*_{k,k'}(f) = \sum_{E\times I'\in \mathcal{T}_{k,k'}}^{}L_{E,I'}(f)\chi_T\chi_{I'} = \sum_{I'\in \I_{k'}}^{}\sum_{I\in \D_{k}}^{}L_{I,I'}(f)\chi_I\chi_{I'}.
	\end{align}
	Let \( L_{E,I'}(f) \in P_{r,r'} \) be interpolant of \( f \) on \( (E\times I') \cap G_{k,k'} \). As shown in \eqref{2.17}, for \( \tau \geq p \),
	\[\|f - S\|_{L^{\tau'}(0,T;L^\tau(\Omega))} \leq |f|_{\V}\, 2^{-(kd\alpha_\tau + k'\beta_\tau)} \leq |f|_{\V}\, \tilde m^{-\alpha_\tau} \hat m^{-\beta_\tau}.\]
	This also holds for \( \tau < p \) by embedding results. Since \( S \) depends only on \( (f_i) \), every \( f \in \F_{\text{data}} \) lies at distance \( C \tilde m^{-\alpha_\tau} \hat m^{-\beta_\tau} \) of \( S \), follows the proof for cases (i) and (ii).\\
	\textbf{Case (iii):} This is similar to case (ii) except now we use theorem \eqref{$thm_2.2$}.\\
	\textbf{Case (iv):} We shall need the following lemma.
	\begin{lemma}\label{lemma_12.4}
		Let \( \OMT = (0,1)^d \times (0,T] \), with \( d \geq 3 \), and let \( \gamma = \delta(d) \) as in \eqref{3.12}. Then
		\[
		\|\tilde f\|_{L^2(0,T;H^{-1}(\Omega))} \leq C \|\tilde f\|_{L^2(0,T;L^\gamma(\Omega))},
		\]
		for all \( \tilde f \in L^2(0,T;L^\gamma(\Omega)) \), where \( C \) depends only on \( d \). For \( d = 2 \) and \( 1 < \tau \leq \infty \), we have
		\[
		\|\tilde f\|_{L^2(0,T;H^{-1}(\Omega))} \leq C \frac{\tau}{\tau - 1} \|\tilde f\|_{L^2(0,T;L^{\tau}(\Omega))}.
		\]
	\end{lemma}
	
	\begin{proof}
	Proof will follow with an integration over [0,T] from, \cite[ Lemma~12.4]{BVPS}.
	\end{proof}
	We now prove the upper bound for case (iv). Since $\alpha_{-1}$ remains constant for $p \geq \gamma$, it suffices to consider $p \leq \gamma$. For $p > \gamma$, the set $U(B^\theta_{p'q'}(0,T;B^s_{pq}(\Omega)))$ is contained in $U(B^\theta_{p'\infty}(0,T;B^s_{\gamma\infty}(\Omega)))$, so the upper bound reduces to the case $\mathcal{F} = U(B^\theta_{p'q'}(0,T;B^s_{pq}(\Omega)))$ with $p \leq \gamma$. For $p \leq \gamma$, we have $U(B^\theta_{p'q'}(0,T;B^s_{pq}(\Omega)))$ $\subset U(B^\theta_{p'\infty}(0,T;B^s_{p\infty}(\Omega)))$. Thus, shows only for $\mathcal{F} = U(\V)$ with $p \leq \gamma$.

	Let \( f \in \mathcal{F} := U(\V) \) with \( s > d \), \( p \leq \gamma \), and \( p' \leq 2 \). Since \( f \in C(\Omega_T) \), we define \( S \) from \eqref{12.42}. From \eqref{2.17}  and Lemma \ref{lemma_12.4} we obtain
	\begin{align*}\label{12.50}
		\|f-S\|_{L^2(0,T;H^{-1}(\Omega))}\leq C\|f - S\|_{L^2(0,T;L^\gamma(\Omega))}
		\leq|f|_{\V}\, 2^{-(kd\alpha_{-1}+k'\beta_{-1})}.
	\end{align*}
	Since \( S \) depends only on the data, every \( g \in \mathcal{F}_{\text{data}}(f) \) satisfies \(\|g-S\|_{L^2(0,T;H^{-1}(\Omega))} \leq C \tilde{m}^{-\alpha_{-1}}\hat{m}^{-\beta_{-1}}\), bounding the recovery rate by \( C m^{-\alpha_{-1}}\hat{m}^{-\beta_{-1}} \). Then \( f \) extends this to \( R^*(\mathcal{F}) \), completing the upper bound proof for case (iv) .

	\textbf{Proof of lower bounds: } To prove the lower bounds in Theorem~\ref{thm_3.1}, we construct a function \( \eta \in \F \) such that  \( \eta(x_i,t_j) = 0 \  i=1,\dots,\tilde{m},\ j=1,\dots,\hat{m}\) and \( \|\eta\|_X \geq c \tilde{m}^{-\alpha_\tau} \hat{m}^{-\beta_\tau} \). Since both \( \eta \) and the zero function satisfy the same data, this yields the desired lower bound for \( R^*(\F)_X \). To construct such an \( \eta \), let \( \varphi \in C^\infty(\mathbb{R}^d \times \mathbb{R}^+) \), non-negative, supported in \( \Omega \times (0,T] \), and satisfying
	\begin{align}\label{12.57}
		\|\varphi\|_{L^\infty} = 1, \quad \varphi(x,t) \geq \frac{1}{2} \text{ on } \Omega_0 \times (T/4, 3T/4), \quad \text{with} \  \Omega_0 := [1/4, 3/4]^d.
	\end{align}
	
	We take \( \varphi(x,t) = \phi(x_1)\cdots\phi(x_d)\phi(t) \), where \( \phi \) is a suitable univariate function, and choose \( \varphi \) depends on  \( s, \theta, p, q, p', q' \) to minimize the norm
	\begin{align}\label{12.58}
		M := \|\varphi\|_{B^\theta_{p'q'}(0,T;B^s_{pq}(\Omega))}.
	\end{align}
	For any cuboid \( I \times I' \subset \Omega \times (0,T] \), let \( \xi_I, \xi_{I'} \) be its smallest vertices and \( \ell_I, \ell_{I'} \) the side lengths. Define
	\begin{align}\label{12.59}
		\varphi_{I,I'}(x,t) := \ell_I^{s - \frac{d}{p}}\ell_{I'}^{\theta - \frac{1}{p'}} \varphi\left(\frac{x - \xi_I}{\ell_I}, \frac{t - \xi_{I'}}{\ell_{I'}}\right),
	\end{align}
	supported in \( I \times I' \), vanishing on its boundary, and satisfying
	\begin{align}\label{12.60}
		\|\varphi_{I,I'}\|_{B^\theta_{p'q'}(0,T;B^s_{pq}(\Omega))} = M.
	\end{align}
	To lower-bound \( R^*_m(\F)_X \), we may assume \( m = 2^{kd}2^{k'} \). Given any \( m \) samples \( S_m \subset \Omega \times (0,T] \), construct a tensor grid \( G^{2,2}_{k+2,k'+2} \) with spacing \( 2^{-(k+2)} \) in space and \( 2^{-(k'+2)} \) in time. This grid has \( 4^{d+1}m \) cuboids, at least \( C_d m = (4^{(d+1)} - 1)m \) of them, denoted \( \mathcal{I}(S_m) \), contain no samples and have volume \( \gtrsim m^{-1} \). For each \( I \times I' \in \mathcal{I}(S_m) \), define $	\eta_{I,I'} := M^{-1} \varphi_{I,I'}.$
	Each function \( \eta_{I,I'} \in \mathcal{F} \) and vanishes at all data sites. These functions \( \eta_{I,I'} \) will be used to establish the lower bounds in Theorem~\ref{thm_3.1}.
	\medskip
	
	\textbf{Case (i):} Let us fix \( s, \theta, p, q, p', q' \), take any cuboid \( I \times I' \in \mathcal{I} \) and define \( \eta := \eta_{I,I'} \). Then \( \eta \) vanishes at all data sites. Thus we get
	\[
	\|\eta\|_{L^\infty(\Omega_T)} = M^{-1} \ell_I^{s - \frac{d}{p}} \ell_{I'}^{\theta - \frac{1}{p'}} \geq c \tilde m^{-\alpha_C} \hat m^{-\beta_C}.
	\]
	Hence, \( R^*(\mathcal{F})_{C(\Omega_T)} \geq c \tilde m^{-\alpha_C} \hat m^{-\beta_C} \), proves the lower bound in case (i).
	
	\medskip
	
	\textbf{Case (ii):} We consider two subcases based on values of \( \tau, \tau' \).
	
	\textit{Case \( \tau \geq p \), \( \tau' \geq p' \):}  
	Let \( \eta = \eta_{I,I'} \) for any fixed cuboid \( I \times I' \in \mathcal{I} \). Then, using the scaling properties of \( \varphi_{I,I'} \),
	\begin{align*}
		\|\eta\|_{L^{\tau'}(0,T;L^\tau(\Omega))} &= M^{-1} \|\varphi_{I,I'}\|_{L^{\tau'}(0,T;L^\tau(\Omega))} \geq c M^{-1} \ell_I^{s - \frac{d}{p} + \frac{d}{\tau}} \ell_{I'}^{\theta - \frac{1}{p'} + \frac{1}{\tau'}} \\&\geq c \tilde m^{-\left( \frac{s}{d} - \frac{1}{p} + \frac{1}{\tau} \right)}\hat{m}^{-\left( \theta- \frac{1}{p'} + \frac{1}{\tau'} \right)}= c \tilde m^{-\alpha_\tau}\hat{m}^{-\beta_{\tau'}}.\\
	\end{align*}
	Hence, \( R^*(\F)_{L^{\tau'}(0,T;L^\tau(\Omega))} \geq c \tilde m^{-\alpha_\tau} \hat m^{-\beta_{\tau'}} \).
	
	\textit{Case \( \tau < p < \infty \), \( \tau' < p' < \infty \):}  
	It suffices to prove the bound for \( p = p' = \infty \). Define $\eta := \kappa \sum_{I \times I' \in \mathcal{I}} \eta_{I,I'},$ with \( \kappa \) chosen so that \( \eta \in U(B^\theta_{\infty q'}(0,T;B^s_{\infty q}(\Omega))) \). Each \( \eta_{I,I'} \) has disjoint support and satisfies $\|\eta\|_{L^\infty(\Omega_T)} \leq M 2^{-ks} 2^{-k'\theta}.$ Furthermore, for \( r := \lceil s \rceil + 1, \ r' := \lceil \theta \rceil + 1 \), the modulus of smoothness of $\eta$ satisfies
	\begin{equation}\label{12.63}
		\omega_{r,r'}(\eta, b\times b')_\infty \leq M \kappa 2^{-ks-k'\theta} \min(1, 2^{kr} b^r) \min(1, 2^{k'r'} b'^r).
	\end{equation}
	Following \cite{BVPS}, we estimate the Besov norm via integral splitting, which gives
	\begin{align}\label{12.64}
		\int_0^T \left( b'^{-\theta} \omega_{r,r'}(\eta, b\times b')_{B^s_{\infty q}(\Omega)} \right)^{q'} \frac{db'}{b}
		\leq \kappa^{q'} C^{q'}, 
	\end{align}
	for constant \( C \), implying \( \kappa \geq c \). Since \( \eta \geq \kappa 2^{-ks - k'\theta} \) on a set of positive measure, and $\|\eta\|_{L^{\tau'}(0,T;L^\tau(\Omega))} \geq c 2^{-ks - k'\theta} \geq c \tilde{m}^{-s/d} \hat{m}^{-\theta},$ which completes the proof for case (ii).
	
	\medskip
	
	\textbf{Case (iii):}  
	Assume \( p, p' \leq 2 \) and take \( \eta = \eta_{I,I'} \).Then
	\begin{align}\label{12.65}
		\|\eta\|_{L^2(0,T;H^1(\Omega))} &= M^{-1} \|\varphi_{I,I'}\|_{L^2(0,T;H^1(\Omega))} \geq c M^{-1} \ell_I^{s - 1 - \frac{d}{p} + \frac{d}{2}}\ell_{I'}^{\theta - \frac{1}{p'} + \frac{1}{2}} \notag \\&\geq c \tilde m^{-\left( \frac{s}{d} - \frac{1}{d} - \frac{1}{p} + \frac{1}{2} \right)}\hat m^{-\left( \theta - \frac{1}{p'} + \frac{1}{2} \right)}, 
	\end{align}
	which gives the desired lower bound for \( R^*(\mathcal{F})_{L^2(0,T;H^1(\Omega))} \) for any fixed \( I \times I' \in \mathcal{I} \).
	
	\medskip
	
	\textbf{Case (iv):}  
	Assume \( p \leq \gamma \). For any \( I \times I' \in \mathcal{I} \), define \( \eta = \eta_{I,I'} \in \mathcal{F} \), which vanishes at all data sites and satisfies
	\begin{align}\label{12.66}
		\eta(x,t) \geq c \tilde m^{- \left( \frac{s}{d} + \frac{1}{p} \right)}\hat m^{- \left( \theta + \frac{1}{p'} \right)}, \quad (x,t) \in I_0\times I_0',
	\end{align}
	where $I_0\times I_0' = [\xi_I - \ell_I /4, \xi_I + \ell_I /4]^d\times [\xi_{I'} - \ell_{I'} /4, \xi_{I'} + \ell_{I'} /4]$. To estimate \( \|\eta\|_{L^2(0,T;H^{-1}(\Omega))} \), define test function \( v \in L^2(0,T;H_0^1(\Omega)) \) supported in \( I \times I' \) by
	\begin{align}\label{12.67}
		v(x,t) := c \ell_I^{1 - d/2} \ell_{I'}^{-1/2}  \, \varphi(\ell_I^{-1}(x - \xi_I), \ell_{I'}^{-1}(t - \xi_{I'})),
	\end{align}
	with \( \|v\|_{L^2(0,T;H^{-1}(\Omega))} = 1 \), and satisfying
	\begin{align}\label{12.68}
		v(x,t) \geq c \ell_I^{1 - d/2}&\ell_{I'}^{-1/2} \geq c \tilde m^{-1/d + 1/2}\hat{m}^{1/2}, \quad (x,t) \in I_0\times I_0'.
	\end{align}
	Then,
	\begin{align} \label{12.69}
				\|\eta\|_{L^2(0,T;H^{-1}(\Omega))} &\geq \int_{0}^{T}\int_{\Omega} \eta(x,t) v(x,t) \, dx\, dt \geq  \int_{I_0'}\int_{I_0} \eta(x,t) v(x,t) \, dx\,dt \notag\\
		&\geq c \tilde m^{-s/d + 1/p} \tilde m^{-1/d + 1/2}\hat{m}^{-\theta+1/p'+1/2} |I_0|\, |I_0'|,
	\end{align}
	which proves the desired lower bound for \( R^*(\mathcal{F})_{L^2(0,T;H^{-1}(\Omega))} \).
	
	For the case \( p > \gamma \), let \( \eta \in \mathcal{F} = U(B^\theta_{p'q'}(0,T;B^s_{\infty q}(\Omega))) \) vanish at data sites. Define \( v = c\varphi \) such that \( \|v\|_{L^2(0,T;H^{-1}(\Omega))} = 1 \). Since \( v \geq c \) on \( \Omega_0 \times [T/4, 3T/4] \), and \( \eta \geq 0 \), we compute
	\begin{align}\label{12.71}
		\|\eta\|_{L^2(0,T;H^{-1}(\Omega))} &\geq \int_{0}^{T}\int_{\Omega} \eta(x,t) v(x,t) \, dx\,dt \geq \int_{T/4}^{3T/4}\int_{\Omega_0} \eta(x,t) v(x,t) \, dx\,dt \notag\\
		&\geq \int_{T/4}^{3T/4}\int_{\Omega_0} \eta(x,t) \, dx\,dt\geq \kappa \sum_{I\times I' \in \mathcal{I}_0} \int_{I'}\int_{I} \eta_{I,I'}(x,t) \, dx\,dt\notag\\ 
		&\geq c 2^{-ks} 2^{-kd}2^{-k'\theta}2^{-k'} \#(\mathcal{I}_0), 
	\end{align}
	where $\mathcal{I}_0$ is the set of $I\times I' \in \mathcal{I}$ with $I\times I' \subset \Omega_0\times [T/4,3T/4]$. The number of such cuboids are at least
	\[
	\#(\mathcal{I}_0) \geq 2^{(d+1)} m - m = (2^{(d+1)} - 1)m,
	\]
	with $m = 2^{kd}2^{k'}$. Substituting into \eqref{12.71} gives
	\begin{align}
		\|\eta\|_{L^2(0,T;H^{-1}(\Omega))} \geq c \tilde m^{-s/d}\hat{m}^{-\theta},
	\end{align}
	which implies $R^*(\mathcal{F})_{L^2(0,T;H^{-1}(\Omega))} \geq c \tilde m^{-\alpha_{-1}}\hat{m}^{-\beta_{-1}}$ when $p > \gamma$, since $1/p - 1/\gamma < 0$. This concludes the proof of lower bound in (iv), and thus the proof of Theorem \ref{thm_3.1}.
\end{proof}
\subsection{Optimal recovery of $g$}
In this section, we analyze the optimal recovery rate of functions \( g \in \mathcal{G} \) in the norm \( H^{1/2,1/4}(\Sigma) \). Let \( \text{Tr} = \text{Tr}_{\Sigma} \) be the trace operator from \( \Omega_T \) onto \( \Sigma \), and define the model class
\[
\mathcal{G} := \{ g = \text{Tr}(v) : \|v\|_{\overline \B} \leq 1 \}, \quad \overline{\B} := B^{\bar\theta}_{\bar{p}'\bar{q}'} (0,T; B^{\bar{s}}_{\bar{p} \bar{q}}(\Omega)),
\]
where \( \bar{s} > d/\bar{p},\, \bar{\theta}>1/\bar{p}' \), ensuring continuity and compact embedding into \( V \) for \( 1\leq\bar{p}, \bar{p}'\leq 2 \) and \( d\geq2 \). We define \( \overline{G}^{r,r'}_{k,k'} := G^{r,r'}_{k,k'} \cap \Sigma \), the lateral boundary grid of size \( \bar{m}\times\hat{m} = 2d [r 2^k]^{d - 1}[r'2^{k'}] \asymp 2^{k(d-1)}2^{k'} \). For recovery, we construct the interpolant
\begin{align}\label{3.18}
	\overline S_{k,k'}(g) := \sum_{j=1}^{\hat{m}} \sum_{i=1}^{\bar{m}} g(x_i,t_j) \bar{\phi}_i(x)\bar{\phi}_j(t),
\end{align}
where \( \bar{\phi}_i\bar{\phi}_j \) are traces of the Lagrange elements \( \phi_i\phi_j \). If \( v \in \overline{\B} \) with \( \text{Tr}(v) = g \), then \( \text{Tr}(S^*_{k,k'}(v)) = \overline{S}_{k,k'}(g) \). 

\begin{theorem}\label{thm_3.2}
	Let \( \overline{\B} = B^{\bar\theta}_{\bar{p'}\bar{q'}}(0,T; B^{\bar{s}}_{\bar{p}\bar{q}}(\Omega)) \) with \( \bar{s} > d/\bar{p} \), \( 1 \leq \bar{p}, \bar{p}' \leq 2 \), \( 1 \leq \bar{q}, \bar{q}' \leq \infty \), and define the model class $	\mathcal{G} := \{ \text{Tr}(v) : \|v\|_{\overline{\B}} \leq 1 \}.$ Then the optimal recovery rate in \( H^{1/2,1/4}(\Sigma) \) satisfies
	\begin{align}\label{3.19}
	R_m^*(\mathcal{G})_{H^{1/2,1/4}(\Sigma)} \asymp \bar{m}^{-\bar\alpha} \hat{m}^{-\beta_H}, \quad \bar m, \hat{m} \geq 1, 
	\end{align}
	with \( \bar\alpha := \frac{\bar{s} - 1}{d - 1} - \frac{d}{d - 1} \left( \frac{1}{\bar{p}} - \frac{1}{2} \right)\) and \( \beta_H \) as in \eqref{3.11}, and constants independent of \( \bar{m}, \hat{m} \). The rate, achieved by using \( \Y = \overline{G}^{r,r'}_{k,k'} \), provided \( r > \max(\bar{s},1) \), \( r' > \max(\bar{\theta},1) \), \(k\geq k'\) and approximating \( g \) from \( S_{k,k'}(g) \).
\end{theorem}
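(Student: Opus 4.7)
The proof parallels Theorem~\ref{thm_3.1}, splitting into matching upper and lower bounds. The novelty is that everything is transported to the trace space through the operator $\mathrm{Tr}$ acting on $\overline{\B}$-valued functions, and the ambient sampling set is the lateral slice $\Y=\overline G^{r,r'}_{k,k'}$ of size $\bar m\hat m\asymp 2^{k(d-1)}2^{k'}$.

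\textbf{Upper bound.} For any $g\in\mathcal{G}$ choose a lift $v\in\overline{\B}$ with $\mathrm{Tr}(v)=g$ and $\|v\|_{\overline{\B}}\leq 1$. By construction \eqref{3.18}, the boundary interpolant satisfies $\overline S_{k,k'}(g)=\mathrm{Tr}(S^*_{k,k'}(v))$ and depends only on the samples $g(x_i,t_j)=v(x_i,t_j)$, since the lateral nodes of $G^{r,r'}_{k,k'}$ coincide with $\Y$. Applying the trace inequality $\|\mathrm{Tr}(w)\|_{H^{1/2,1/4}(\Sigma)}\leq C\|w\|_{L^2(0,T;H^1(\Omega))}$ to $w=v-S^*_{k,k'}(v)$ and invoking Theorem~\ref{$thm_2.2$}(ii) (whose hypotheses $1\leq\bar p,\bar p'\leq 2$ are met) yields
\begin{equation*}
\|g-\overline S_{k,k'}(g)\|_{H^{1/2,1/4}(\Sigma)}\leq C|v|_{\overline{\B}}\,2^{-k(\bar s-1-d/\bar p+d/2)-k'(\bar\theta-1/\bar p'+1/2)}.
\end{equation*}
Substituting $\bar m\asymp 2^{k(d-1)}$, $\hat m\asymp 2^{k'}$ and taking the infimum over lifts $v$ produces the target upper bound $\bar m^{-\bar\alpha}\hat m^{-\beta_H}$ on $R^*_m(\mathcal{G})_{H^{1/2,1/4}(\Sigma)}$.

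\textbf{Lower bound.} Given any placement $\Y\subset\Sigma$ of $\bar m\hat m$ data sites, I build a trial function $\eta\in\mathcal{G}$ vanishing on $\Y$ whose $H^{1/2,1/4}(\Sigma)$ norm realises the claimed rate. Refine the lateral grid to spacing $2^{-(k+2)}$ in space and $2^{-(k'+2)}$ in time; this produces $\asymp 2^{(k+2)(d-1)+k'+2}\gg\bar m\hat m$ boundary-adjacent cuboids $I\times I'\subset\bar\Omega\times(0,T]$, where $I$ has one face on $\partial\Omega$ with $\ell_I=2^{-(k+2)}$ and $\ell_{I'}=2^{-(k'+2)}$. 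A counting argument as in the proof of Theorem~\ref{thm_3.1} selects one such cuboid whose lateral face is disjoint from $\Y$. Choose a fixed smooth bump $\varphi\in C^\infty_c$ supported in $[0,1]^d\times[0,1]$ that is non-zero on $\{x_1=0\}\times[1/4,3/4]^{d-1}\times[1/4,3/4]$, and define $\varphi_{I,I'}$ by the scaling \eqref{12.59}. Then $\|\varphi_{I,I'}\|_{\overline{\B}}\leq M$ as in \eqref{12.60}, so $\eta:=\mathrm{Tr}(M^{-1}\varphi_{I,I'})$ lies in $\mathcal{G}$ and vanishes at every data site of $\Y$. A direct scaling of the rescaled boundary bump then gives
\begin{equation*}
\|\eta\|_{H^{1/2,1/4}(\Sigma)}\gtrsim \ell_I^{\bar s-d/\bar p+(d-2)/2}\,\ell_{I'}^{\bar\theta-1/\bar p'+1/2}\asymp\bar m^{-\bar\alpha}\hat m^{-\beta_H}.
\end{equation*}

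\textbf{Main obstacle.} The technical heart is the scaling computation in the lower bound. The norm $H^{1/2,1/4}(\Sigma)=L^2(0,T;H^{1/2}(\partial\Omega))\cap H^{1/4}(0,T;L^2(\partial\Omega))$ is anisotropic, so the rescaled tensor bump contributes two competing seminorms --- $\ell_{I'}^{1/2}\ell_I^{(d-2)/2}$ from the spatial $H^{1/2}$ term and $\ell_{I'}^{1/4}\ell_I^{(d-1)/2}$ from the temporal $H^{1/4}$ term. Verifying that under the standing regime $k\geq k'$ (equivalently $\ell_{I'}\geq\ell_I$, so that $\ell_{I'}^{1/4}\ell_I^{-1/2}\geq 1$) the former dominates, and that its combined exponent matches precisely $\bar\alpha$ after conversion via $\bar m\asymp 2^{k(d-1)}$, is the nontrivial bookkeeping tying the construction to the claimed rate. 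This scaling analysis also clarifies the hypotheses: $1\leq\bar p,\bar p'\leq 2$ is exactly what is needed to apply Theorem~\ref{$thm_2.2$}(ii) in the upper bound, while $k\geq k'$ is what guarantees that the spatial $H^{1/2}$-term drives the lower bound.
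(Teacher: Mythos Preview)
Your lower bound construction is essentially the paper's: place a rescaled bump adjacent to a data-free face and compute the two anisotropic seminorms of its trace. That part is fine.

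The upper bound, however, has a genuine gap. The inequality you invoke,
\[
\|\mathrm{Tr}(w)\|_{H^{1/2,1/4}(\Sigma)}\leq C\|w\|_{L^2(0,T;H^1(\Omega))},
\]
is false: $L^2(0,T;H^1(\Omega))$ gives no temporal regularity, so its trace lies only in $L^2(0,T;H^{1/2}(\partial\Omega))$, not in $H^{1/4}(0,T;L^2(\partial\Omega))$. The correct parabolic trace estimate, and the one the paper uses, is
\[
\|g-\overline S_{k,k'}(g)\|_{H^{1/2,1/4}(\Sigma)}\leq \|v-S^*_{k,k'}(v)\|_{L^2(0,T;H^1(\Omega))}+\|v-S^*_{k,k'}(v)\|_{H^1(0,T;H^{-1}(\Omega))}.
\]
The second term is the missing ingredient. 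Estimating it via the analogue of Theorem~\ref{$thm_2.2$} produces an extra loss of one temporal derivative, i.e.\ a factor $2^{k'}$, which is then traded against the spare spatial factor $2^{-k}$ coming from $H^{-1}$ versus $H^1$. That trade is exactly where the hypothesis $k\geq k'$ enters: it is needed in the \emph{upper} bound to make the $H^1(0,T;H^{-1}(\Omega))$ term no worse than $\bar m^{-\bar\alpha}\hat m^{-\beta_H}$.

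Consequently, your diagnosis in the final paragraph is inverted. The condition $k\geq k'$ is not what makes the spatial $H^{1/2}$ seminorm dominate in the lower bound --- that seminorm already delivers the rate $\bar m^{-\bar\alpha}\hat m^{-\beta_H}$ on its own, and since $\|\cdot\|_{H^{1/2,1/4}(\Sigma)}$ dominates each of its two pieces, no comparison between them is needed for the lower bound. The role of $k\geq k'$ is purely in the upper bound, to absorb the temporal derivative lost when controlling the $H^{1/4}$-in-time part of the trace norm.
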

\begin{proof}
	To establish the upper bound in \eqref{3.19}, we start with the trace norm inequality
	\begin{align}
		\|g-\bar{S}_{k,k'}\|_{H^{1/2,1/4}(\Sigma)}\leq \|v-S_{k,k'}\|_{L^2(0,T;H^1(\Omega))} + \|v-S_{k,k'}\|_{H^1(0,T;H^{-1}(\Omega))}
	\end{align}
	We first estimate the error in the \( L^2(0,T;H^1(\Omega)) \) norm
	\begin{align}\label{12.73}
		\|v-S_{k,k'}\|_{L^2(0,T;H^{1}(\Omega))}&\leq C[2^{kd}]^{-\alpha_H} [2^{k'}]^{-\beta_H}\notag\\
		&\leq C\,2^{-kd\left(\frac{\bar s-1}{d}-\left(\frac{1}{\bar p}-\frac{1}{2}\right)\right)}2^{-k'\left(\bar\theta - \left(\frac{1}{\bar p'}-\frac{1}{2}\right)\right)} \notag\\& \leq C\,2^{-k(d-1)\left(\frac{\bar s-1}{d-1}-\frac{d}{(d-1)}\left(\frac{1}{\bar p}-\frac{1}{2}\right)\right)}2^{-k'\left(\bar \theta - \left(\frac{1}{\bar p'}-\frac{1}{2}\right)\right)} \leq C\,\bar m^{-\bar\alpha}\hat{m}^{-\beta_H}.	
	\end{align}
	Next, we consider \( H^1(0,T;H^{-1}(\Omega)) \) norm. Using the approach as in Theorem \ref{$thm_2.2$}, for $k\geq k'$
	\begin{align}
		\|v-S_{k,k'}\|_{H^1(0,T;H^{-1}(\Omega))}&\leq C\,2^{-kd\left(\frac{\bar s}{d}-\left(\frac{1}{\bar p}-\frac{1}{\delta}\right)\right)} 2^{-k'\left(\bar \theta - 1- \left(\frac{1}{\bar p'}-\frac{1}{2}\right)\right)}\notag\\
		&\leq C\,2^{-k(d-1)\left(\frac{\bar s-1}{d-1}-\frac{d}{(d-1)}\left(\frac{1}{\bar p}-\frac{1}{\delta}\right)\right)}2^{-k'\left(\bar \theta - \left(\frac{1}{\bar p'} -\frac{1}{2}\right)\right)} \leq  C\,\bar m^{-\check\alpha}\hat{m}^{-\beta_H},	
	\end{align}
    where \( \check{\alpha} = \frac{\bar s-1}{d-1} - \frac{d}{d-1} \left( \frac{1}{\bar p} - \frac{1}{\delta} \right) \). Combining both estimates, we obtain
	\begin{align}\label{12.74}
		R_m(g)_{H^{1/2,1/4}(\Sigma)}\leq C\, \{\bar m^{-\bar\alpha}\hat{m}^{-\beta_H} + \bar m^{-\check\alpha} \hat{m}^ {-\beta_H}\} \leq C\,\bar m^{-\bar\alpha}\hat{m}^{-\beta_H},
	\end{align}
	where \( \bar{m} = 2d[(r-1)2^k]^{d-1} \) and \( \hat{m} = 2^{k'} \). Since the choice of \( g \in \mathcal{G} \) was arbitrary, it follows $	R^*_m(\G)_{H^{1/2,1/4}(\Sigma)}\leq C\, \bar m^{-\bar\alpha}\hat{m}^{-\beta_H}.$ By monotonicity of \( R_m^* \), this estimate holds for all \( m \), thereby completing the proof of the upper bound in \eqref{3.19}. We now prove the lower bound in \eqref{3.19}. Due to the monotonicity of \( R^*_m(\G) \) in \( H^{1/2,1/4}(\Sigma) \), it suffices to show the inequality for \( m = 2^{k'}(2^{k(d-1)} - 1) \), for any non-negative integer \( k \), using a similar argument to Theorem \ref{thm_3.1}.
	
	Let \( \Y := \{(x_i, t_j)\} \subset \partial\Omega \times (0,T] \) be data sites with \( \bar m = 2^{k(d-1)} - 1 \) and \( \hat m = 2^{k'} \), and consider the face  \( F := \{ (x,t) \in \Omega_T : x \cdot e_1 = 0, e_1 = (1, 0, \ldots, 0) \in \mathbb{R}^d\} \). Among the dyadic cubes in \( \mathcal{D}_{k,k'}(F) \), there exist a cube \( \bar J \times \bar J' \) contains no data sites from \( \Y \). Let \( J \times J' \in \mathcal{D}_{k,k'}(\Omega_T) \) be the cube having \( \bar J \times \bar J' \) as a face. If \((\xi'_{J}, \xi'_{J'}) \) is the center of \( J\times J'\), then the point \( (\xi'_{J}, \xi'_{J'}) - (2^{-k-1}, 0, \ldots, 0) \) is the center of \( \bar J\times \bar{J'} \). Define
	\begin{align}\label{12.76}
		v(x,t) := M^{-1} \phi_{J,J'}\left((x,t) - (2^{-k-1}, 0, \ldots, 0)\right),
	\end{align}
	and set $\eta := T_{\Sigma} v,$ so that \( \eta \in \G \) and vanishes at all data sites \( \Y \). We now show that $\|\eta\|_{H^{1/2,1/4}(\Sigma)} \geq c \bar m^{-\bar\alpha}\hat{m}^{-\beta_H}$ . Let \( \eta_0 \) denote the trace of \( \phi \) on \( x \cdot e_1 = 1/2 \), define $	\bar M := |\eta_0|_{L^2(0,T;H^{1/2}(\partial\Omega))} > 0, \ \tilde M := |\eta_0|_{H^{1/4}(0,T;L^2(\partial\Omega))} >0.$ Then
	\begin{align}\label{12.80}
		|\eta|_{L^2(0,T;H^{1/2}(\partial\Omega))} &= \bar M M^{-1} 2^{-k(\bar{s} - d/\bar{p})} 2^{k/2} 2^{-k(d-1)/2} 2^{-k'(\bar{\theta}-1/\bar{p}')}2^{-k'/2}\\\notag
		&= \bar M M^{-1}2^{-k(d-1) \left(\frac{\bar{s}-1}{d-1}-\frac{d}{(d-1)}\left(\frac{1}{\bar p}-\frac{1}{2}\right)\right)} 2^{-k'\left(\theta-\left (\frac{1}{\bar p'}-\frac{1}{2}\right)\right)}\geq c \bar m^{-\bar\alpha}\hat{m}^{-\beta_H}.
	\end{align}
	Similarly,
	\begin{align}
		|\eta|_{H^{1/4}(0,T;L^2(\partial\Omega))} &= \tilde M M^{-1} 2^{-k(\bar{s} - d/\bar{p})} 2^{-k(d-1)/2} 2^{-k'(\bar{\theta} -1/\bar{p'})} 2^{k'/4}2^{-k'/2}\\ \notag
		&= \tilde M M^{-1} 2^{-k(d-1)\left(\frac{\bar s-1}{d-1}-\frac{d}{d-1}\left(\frac{1}{\bar p}-\frac{1}{2}\right)\right)} 2^{-\frac{k}{2}} 2^{-k' \left( \bar \theta-\left(\frac{1}{\bar p'}-\frac{1}{2}\right)\right)}2^{-\frac{k'}{4}}\geq c\bar{m}^{-\bar\alpha-\frac{1}{2}} \hat{m}^{-\beta_H-\frac{1}{4}}.
	\end{align}
	Combining both components of the \( H^{1/2,1/4}(\Sigma) \) norm gives the lower bound.
	\begin{align}
		R^*_m(\G)_{H^{1/2,1/4}(\Sigma)}&\geq c\, \{\bar m^{-\bar\alpha}\hat{m}^{-\beta_H} + \bar m^{-\bar\alpha}\hat{m}^{-\beta_H}\bar{m}^{-\frac{1}{2}}\hat{m}^{-\frac{1}{4}}\}\\ &\geq c\,\bar m^{-\bar\alpha}\hat{m}^{-\beta_H}.\notag
	\end{align}
\end{proof}
\subsection{Optimal recovery of $u_0$}
Let \( \mathcal{M} := U(\B_0) \), with \( \B_0 = B^{\check{s}}_{p q}(\Omega) \), \( \check{s} > d/\check{p} \). For \( r \geq 2 \), define the initial grid \( G_{k,r} = \{(x_i,0)\}_{i=1}^{\tilde{m}} \subset \Omega \times \{0\} \), where \( \tilde{m} = (r 2^k)^d \). The optimal recovery rate in norm \( X \) then satisfies $R^*_{\tilde{m}}(\mathcal{M})_X \asymp \tilde{m}^{-\alpha_X},$
with the upper bound achieved on \( G_{k,r} \), and the lower bound holding for any set of \( \tilde{m} \) data points. See \cite[Section~3.1]{BVPS} for the proof.
\begin{theorem}\label{thm_u0}
	Let \( \Omega = (0,1)^d \), and \( \M := U(\B_0) \) with \( \B_0 = B^{\check{s}}_{\check{p}\check{q}}(\Omega) \), where \( \check{s} > d/\check{p} \), \( 1 \leq \check{p} \leq 2 \), \( 1 < \check{q} \leq \infty \). Then the optimal recovery rate in \( L^2(\Omega) \) satisfies
	\begin{align}\label{u_0}
		R^*_{\tilde{m}}(\M)_{L^2(\Omega)} \asymp \tilde{m}^{-\alpha}, \quad \tilde m \geq 1, \quad \alpha := \frac{\check{s}}{d} - \left( \frac{1}{\check{p}} - \frac{1}{2} \right),
	\end{align}
	with constants of equivalence independent of \( \tilde m \).
\end{theorem}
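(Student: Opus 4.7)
The plan is to mirror the argument of Theorem \ref{thm_3.1}, case (ii), specialized to the stationary setting where time plays no role. Since $u_0$ depends only on the spatial variable, the tensor-product grid collapses to a purely spatial one and the Besov smoothness $B^{\check s}_{\check p \check q}(\Omega)$ controls recovery by itself. Both halves of the $\asymp$ relation in \eqref{u_0} are then established by adapting, respectively, the spatial direct estimate of Theorem \ref{$thm_2.2$}(i) and the bump-function lower-bound construction from Theorem \ref{thm_3.1}. As in earlier proofs it suffices to treat $\B_0 = B^{\check s}_{\check p \infty}(\Omega)$ for the upper bound (since $B^{\check s}_{\check p \check q} \hookrightarrow B^{\check s}_{\check p \infty}$) and to place test bumps in the smaller space for the lower bound.

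For the upper bound I would fix $r > \max(\check s, 1)$ and take the data sites to be the spatial tensor grid $G_{k,r} \subset \bar\Omega$ with $\tilde m = (r 2^k)^d$ points. On the associated simplicial decomposition $\D_k$ of $\bar\Omega$ I define the piecewise Lagrange interpolant $S^*_k(u_0) := \sum_{I \in \D_k} L_I(u_0)\chi_I$, exactly as in \eqref{2.16} but without the temporal factor. The stationary analogue of Theorem \ref{$thm_2.2$}(i), taken with $\tau = 2$ (valid since $\check p \leq 2$ implies $\check p \leq \tau$), yields
$$\|u_0 - S^*_k(u_0)\|_{L^2(\Omega)} \leq C\,|u_0|_{\B_0}\,2^{-kd\bigl(\check s/d - (1/\check p - 1/2)\bigr)} \leq C\,\tilde m^{-\alpha}.$$
Because $S^*_k(u_0)$ is determined entirely by the sampled values $u_0(x_i)$, every competing $w \in \M_{\text{data}}$ obeys $\|w - S^*_k(u_0)\|_{L^2(\Omega)} \leq C \tilde m^{-\alpha}$, giving $R^*_{\tilde m}(\M)_{L^2(\Omega)} \leq C \tilde m^{-\alpha}$ along the subsequence $\tilde m = (r 2^k)^d$; monotonicity of $R^*_{\tilde m}$ in $\tilde m$ extends the bound to all $\tilde m \geq 1$.

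For the matching lower bound I would use only the spatial factor of the bump-function device from \eqref{12.57}--\eqref{12.60}. Given any $\tilde m$ data sites in $\Omega$, partition $\Omega$ into dyadic cubes of sidelength $\asymp \tilde m^{-1/d}$; pigeonholing guarantees that a fixed positive fraction of them, a family $\mathcal{I}$, contains no data site. For any $I \in \mathcal{I}$ set $\eta_I(x) := M^{-1} \ell_I^{\check s - d/\check p} \varphi((x - \xi_I)/\ell_I)$, where $\varphi$ is the spatial part of the test bump, normalized so that $\eta_I \in U(\B_0)$. Taking $\eta = \eta_I$ produces a function in $\M$ that vanishes at every data site and satisfies
$$\|\eta\|_{L^2(\Omega)} \geq c\,M^{-1}\,\ell_I^{\check s - d/\check p + d/2} \geq c\,\tilde m^{-\alpha},$$
the scaling argument working in one shot because $\check p \leq 2$ (no superposition over $\mathcal{I}$ is needed, contrary to the $p > \tau$ subcase of Theorem \ref{thm_3.1}). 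Since both $\eta$ and the zero function satisfy the same data, $R^*_{\tilde m}(\M)_{L^2(\Omega)} \geq c\,\tilde m^{-\alpha}$.

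The main obstacle is essentially bookkeeping: one must verify that the spatial-only versions of the Besov direct estimate and of the bump normalization work uniformly across the admissible ranges of $\check p, \check q$, and in particular that $|\eta_I|_{B^{\check s}_{\check p \check q}(\Omega)}$ stays bounded under the chosen scaling. No genuinely new difficulty arises beyond what is already handled in Theorems \ref{$thm_2.2$} and \ref{thm_3.1}, and the cleanness of the reduction to the stationary case is exactly why the authors can defer to \cite[Section~3.1]{BVPS}; my proof would amount to restating that argument in the present notation.
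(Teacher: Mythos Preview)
Your proposal is correct and matches the paper's approach: the paper does not reprove Theorem~\ref{thm_u0} but simply defers to \cite[Section~3.1]{BVPS}, and what you have written is precisely the stationary specialization of the Theorem~\ref{thm_3.1} machinery that that reference contains. Your own closing paragraph already identifies this.
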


\subsection{Optimal recovery of $u$}
We are now going to study the optimal recovery rate in \( V := \{v \in L^2(0,T;H^1(\Omega)) : v_t \in L^2(0,T;H^{-1}(\Omega))\} \) for functions \( u \in \U \), given data satisfies \eqref{1.1}
under budget \( m = (\tilde{m} + \bar{m})\hat{m} \). For ,  \(s > d/p,\quad \theta > 1/p',\quad \bar{s} > d/\bar{p},\quad \bar{\theta} > 1/\bar{p}',\quad \check{s} > d/\check{p}\) and \( \F := U(B^\theta_{p'q'}(0,T;B^s_{pq}(\Omega))) \), \( \G := \operatorname{Tr}(U(B^{\bar{\theta}}_{\bar{p}'\bar{q}'}(0,T;B^{\bar{s}}_{\bar{p}\bar{q}}(\Omega)))) \), and \( \M := U(B^{\check{s}}_{\check{p}\check{q}}(\Omega),)\) the model class for $u$ is
\[
\U := \left\{ u \in C(\Omega) : u \text{ solves \eqref{1.1} with } f \in \F,\, g \in \G,\, u_0 \in \M \right\}.
\]

The following theorem gives the optimal recovery rate for \( \U \).
\begin{theorem}
	Let \( \Omega_T = (0,1)^d \times (0,T] \), and let \( \F, \G, \M, \U \) be defined as above. Then the optimal recovery rate in \( V \) satisfies $R^*_m(\U)_V \asymp m^{-\min(a,b,c)}.$
\end{theorem}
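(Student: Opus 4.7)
The plan is to split the argument into an upper bound obtained by solving the heat equation with reconstructed data, and a lower bound obtained by exhibiting pairs of PDE solutions that share all sampled values but are separated in $V$. Both halves use the continuous dependence estimate \eqref{1.3} as the bridge that turns data-level rates into $u$-level rates; the three individual recovery rates for $f$, $g$, $u_0$ supplied by Theorems \ref{thm_3.1}(iv), \ref{thm_3.2}, and \ref{thm_u0} are the only quantitative inputs.

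\textbf{Upper bound.} Given data sites $\X$, $\Y$, $\Z$ of sizes $\tilde m\hat m$, $\bar m\hat m$, and $\tilde m$, I build the piecewise polynomial interpolants $\hat f=S^*_{k,k'}(f)$ from \eqref{2.16}, $\hat g=\overline S_{k,k'}(g)$ from \eqref{3.18}, and the initial-time Lagrange interpolant $\hat u_0$ of $u_0$ on $\Z$. Let $\hat u\in V$ be the unique weak solution of \eqref{1.1} driven by $(\hat f,\hat g,\hat u_0)$, which exists by the well-posedness cited after \eqref{1.3}. Linearity of \eqref{1.1} implies $u-\hat u$ solves \eqref{1.1} with data $(f-\hat f,\, g-\hat g,\, u_0-\hat u_0)$, so the upper half of \eqref{1.3} produces
\begin{align*}
\|u-\hat u\|_V \leq C\Big( \|f-\hat f\|_{L^2(0,T;H^{-1}(\Omega))} + \|g-\hat g\|_{H^{1/2,1/4}(\Sigma)} + \|u_0-\hat u_0\|_{L^2(\Omega)} \Big).
\end{align*}
Applying Theorem \ref{thm_3.1}(iv), Theorem \ref{thm_3.2}, and Theorem \ref{thm_u0} to the three norms, then optimizing $(\tilde m,\bar m,\hat m)$ under the budget constraint $(\tilde m+\bar m)\hat m\asymp m$, yields the upper bound $C\,m^{-\min(a,b,c)}$.

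\textbf{Lower bound.} For each data channel I run a two-solution argument. Theorem \ref{thm_3.1}(iv) furnishes $\eta_1\in\F$ vanishing on $\X$ with $\|\eta_1\|_{L^2(0,T;H^{-1}(\Omega))}\geq c\,\tilde m^{-\alpha_{-1}}\hat m^{-\beta_H}$; let $w_1\in V$ solve \eqref{1.1} with data $(\eta_1,0,0)$. Since $(\eta_1,0,0)$ and $(0,0,0)$ produce identical samples on $\X\cup\Y\cup\Z$ and the zero function lies in each of $\F,\G,\M$, both $w_1$ and $0$ belong to $\U_{\text{data}}$; the lower half of \eqref{1.3} forces
\begin{align*}
\|w_1\|_V \geq c\,\|\eta_1\|_{L^2(0,T;H^{-1}(\Omega))} \geq c\,\tilde m^{-\alpha_{-1}}\hat m^{-\beta_H}.
\end{align*}
Parallel constructions using $\eta_2\in\G$ vanishing on $\Y$ from Theorem \ref{thm_3.2} and $\eta_3\in\M$ vanishing on $\Z$ from Theorem \ref{thm_u0} yield two further $\U_{\text{data}}$-pairs separated in $V$ by $c\,\bar m^{-\bar\alpha}\hat m^{-\beta_H}$ and $c\,\tilde m^{-\alpha}$ respectively. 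The recovery radius of $\U_{\text{data}}$ is bounded below by half the largest of these three separations, so rebalancing $(\tilde m,\bar m,\hat m)$ exactly as in the upper bound completes the matching lower bound.

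\textbf{Main obstacle.} The delicate step is the joint optimization of $(\tilde m,\bar m,\hat m)$ against the budget $m=(\tilde m+\bar m)\hat m$: the $f$-rate $\tilde m^{-\alpha_{-1}}\hat m^{-\beta_H}$ and the $g$-rate $\bar m^{-\bar\alpha}\hat m^{-\beta_H}$ share the temporal factor, while the $u_0$-rate $\tilde m^{-\alpha}$ is purely spatial. Writing each rate as a power of $m$ requires a case analysis on the relative sizes of $\alpha_{-1}$, $\bar\alpha$, $\alpha$, and $\beta_H$, and one must check that the allocation minimizing the upper bound is the same (up to constants) as the one realizing the lower bound, so that the three exponents pair up and identify $a$, $b$, $c$. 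The underlying constructions from Theorems \ref{thm_3.1}--\ref{thm_u0} are already in place; the novelty is in the PDE lifting via \eqref{1.3} and the combined budget balance.
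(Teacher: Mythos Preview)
Your approach is essentially the same as the paper's. Both proofs hinge on the two-sided estimate \eqref{1.3}, which the paper packages as the single equivalence
\[
R^*(\U,\X,\Y,\Z)_V \;\asymp\; R^*(\F,\X)_{L^2(0,T;H^{-1}(\Omega))} + R^*(\G,\Y)_{H^{1/2,1/4}(\Sigma)} + R^*(\M,\Z)_{L^2(\Omega)},
\]
whereas you unpack this into an explicit reconstruction $\hat u$ for the upper bound and three separating pairs $(w_i,0)$ for the lower bound. These are the same argument written at two levels of abstraction.

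One wording issue worth flagging: in your lower bound, ``rebalancing $(\tilde m,\bar m,\hat m)$ exactly as in the upper bound'' is not the right description. For the lower bound you do not choose the allocation; you must show that for \emph{every} choice of data sites with total budget $m$, the maximum of the three channel separations is $\gtrsim m^{-\min(a,b,c)}$. The paper handles this (tersely) by noting that each sub-budget is at most $m$, so each channel lower bound already dominates $m$ to the negative of its own exponent, and hence the maximum dominates $m^{-\min(a,b,c)}$. Your ``main obstacle'' paragraph correctly identifies the budget balance as the crux, but the resolution for the lower half is this monotonicity observation, not a rebalancing.
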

\begin{proof}
	Let data sites \( \X, \Y, \Z \) be fixed with \( |\X| = \tilde m \hat{m} \), \( |\Y| = \bar m \hat{m} \), \( |\Z| = \tilde m \), and total budget \( m = (\tilde m + \bar m)\hat{m} + \tilde{m} \). For any \( u_1, u_2 \in \U_{\text{data}} \) with corresponding \( f_i \in \F_{\text{data}},\, g_i \in \G_{\text{data}},\, u_0^i \in \M_{\text{data}} \), it follows from \eqref{1.3} that
	\begin{align*}\label{3.23}
		\|u_1 - u_2\|_{V} \asymp \|f_1 - f_2\|_{L^2(0,T;H^{-1}(\Omega))} + \|g_1 - g_2\|_{H^{1/2,1/4}(\Sigma)} + \|u_0^1-u_0^2\|_{L^2(\Omega)},\notag\\
		R^*(\U, \X, \Y, \Z)_{V} \asymp R^*(\F, \X)_{L^2(0,T;H^{-1}(\Omega)))} + R^*(\G, \Y)_{H^{1/2,1/4}(\Sigma)} + R^*(\M, \Z)_{L^2(\Omega)}.
	\end{align*}
	%
	Applying Theorems~\ref{thm_3.1}, \ref{thm_3.2}, and \ref{thm_u0}, and assuming \( \tilde m \hat{m} \asymp \bar m \hat{m} \asymp m/2 \), we obtain the upper bound
	\[
	R^*_m(U)_{V} \lesssim m^{-a} + m^{-b} + m^{-c} \lesssim m^{-\min(a, b, c)}. 
	\]	
	Conversely, applying the same theorems, we obtain
	\begin{align*}
		m^{-\min{(a,b,c)}}&\lesssim \inf_{m = \tilde{m}\times \hat{m} + \tilde{m}\times\hat{m} + \tilde{m}} \left((\tilde{m}\times \hat{m})^{-a} + (\bar{m}\times\hat{m})^{-b} + \tilde{m}^{-c}\right)\\
		&\lesssim	 \inf_{m = \tilde{m}\times \hat{m} + \tilde{m}\times\hat{m} + \tilde{m}} \left( R^*_{\tilde{m},\hat{m}}(\F) _{L^2(0,T;H^{-1} (\Omega))} + R^*_{\bar{m}, \hat{m}}(\G)_{H^{1/2,1/4}(\Sigma)} + R^*_{\tilde{m}}(\M)_{L^2(\Omega)} \right) \\
		&\lesssim \inf_{m = |\X| + |\Y| + |\Z|} R^*(\U, \X, \Y, \Z)_{V} = R^*_m(\U)_{V},
	\end{align*}
	which completes the proof.
\end{proof}
\subsection{Concluding remarks on recovery rates}
This section derived the optimal recovery rates for the model classes \( \mathcal{F}, \mathcal{G}, \mathcal{M}, \mathcal{U} \), showing that several distinct classes yield the same rates. Consequently, smaller model classes are superfluous if they embed into larger ones with identical recovery performance. For instance, \( \mathcal{F} = \mathcal{U}(B^\theta_{p'q'}(0,T;B^s_{pq}(\Omega))) \) achieves the same optimal rate in \( L^2(0,T;H^{-1}(\Omega)) \) as in \( L^2(0,T;L^\delta(\Omega)) \) with \( \delta = 2d/(d+2) \), and attains the rate \( \tilde{m}^{-\alpha/d} \hat{m}^{-\beta} \) if it embeds into \( \mathcal{U}(B^\beta_{p'\infty} (0,T;B^\alpha_ {\gamma\infty}(\Omega))) \) and \( C(\Omega_T) \). The largest Besov-type model classes achieving optimal recovery rates for \(f, u_0, g\) are given as follows
\begin{align}\label{3.24}
	\begin{cases}
		\F&:= U(\B),\quad \B = B^\theta_{2,\infty}(0,T;B^s_{p,\infty}(\Omega)),\  p \geq \delta,\,  p' \geq 2,\, s > \frac{d}{p},\,  \theta > \frac{1}{p'},\\
		\mathcal{M} &:= \mathcal{U}(\B_0), \quad \B_0 = B^{\check{s}}_{p,\infty}(\Omega), \ \check{s}>d/\check p, 1 \leq \check p \leq \infty, \\
		\G &:= \text{Tr}(U(\overline{\B})),\quad \overline{\B} = B^{\bar\theta}_{2,\infty}(0,T;B^{\bar{s}}_{2,\infty}(\Omega)),\  1 \leq \bar{p}, \bar{p}' \leq 2,\, \bar{s} >d/\bar{p},
	\end{cases}
\end{align}
with \(\, \bar{\theta} > 1/\bar{p}', \, \bar{s}> \frac{d}{2},\, \bar{\theta}>\frac{1}{2}.\) Then the corresponding optimal recovery rate 
\begin{align}\label{3.25}
	\begin{cases}
		R^{*}_{m}(\F)_{L^2(0,T;H^{-1}(\Omega))} &\asymp \tilde{m}^{-\frac{s}{d}}\hat{m}^{\theta},\quad \tilde{m},\hat{m}\geq 1,\\
		R^*_{\tilde m}(\mathcal{M})_{L^2(\Omega)} &\asymp \tilde m^{-\check{s}/d}, \quad \tilde m \geq 1,\\
		R^{*}_{m}(\G)_{H^{1/2,1/4}(\Sigma)}&\asymp \bar{m}^{-\frac{\bar s-1}{d-1}}\hat{m}^{-\bar\theta}, \quad \bar{m}, \hat m\geq 1.
	\end{cases}
\end{align}

Based on the above discussion, from this point forward we fix the model classes by setting \( \F,\ \G\) and \( \M  \) as in \eqref{3.24}.

\textbf{Model classes for $u$:} Using model classes \(\F, \G, \M\) from \eqref{3.24} with \(\tilde{m} \asymp \bar{m}\), for $\tilde m, \hat{m} \geq 2$ the model class \(\U\) for \(u\) achieves the optimal recovery rate
\[
R^{*}_{m}(\U)_{V} \asymp \tilde m^{-\min\left\{\frac{s}{d}, \frac{\bar{s}-1}{d-1}, \frac{\check{s}}{d}\right\}}\hat{m}^{-\min\{\theta, \bar{\theta}\}}. \quad \tilde m, \hat m \geq 2.
\]
\section{Norms Discretization}\label{Sec_5}
This section introduces discrete norms based only on data site values. Replacing the norms in \( \mathcal{L}_T \) with these yields a numerically computable loss which is equivalent up to optimal recovery rates. 

\subsection{Discrete \( L^2(0,T;L^\tau(\Omega)) \) Norm}\label{sec_6.1}
We discretize the \( L^2(0,T;L^\tau(\Omega)) \) norm for \( 1 \leq \tau \leq \infty \), assuming \( f \in \F := U(B^{\theta}_{p'q'} (0,T;B^s_{pq}(\Omega))) \), with \( s > d/p \), \( \theta > 1/p' \). Consider the uniform grid \( G^{r,r'}_{k,k'} \subset \overline{\Omega} \), see \eqref{3.8}, with size \( \tilde{m} = [r2^k]^d[r'2^{k'}] \), where \( r > \max\{s,1\} \), \( r' > \max\{\theta,1\} \). For \( 1 \leq \tau < \infty \), define
\begin{align}
	\|f\|^{*}_{L^2(0,T;L^\tau(\Omega))}:=  \left[\frac{1}{\hat m}\sum_{j = 1}^{\hat m}\left[\frac{1}{\tilde m}\sum_{i = 1}^{\tilde m}|f(x_i,t_j)|^\tau \right] ^{\frac{2}{\tau}}\right]^{\frac{1}{2}},\quad \text{where} \, (x_i,t_j) \in G_{k,k'}^{r,r'}.
\end{align}
For \( \tau = \infty \), a standard modification applies. The lemma below shows the discrete \( L^2(0,T;L^\tau(\Omega)) \) norm closely approximates the true norm for model class \( \F \).

\begin{lemma}\label{lemma_6.1}
	Let \( f \in  B^{\theta}_{p'q'}(0,T;B^s_{pq}(\Omega)) \), with \( s > d/p \), \( \theta > 1/p' \), \( 1 \leq \tau \leq \infty \), and \( \Omega_T = (0,1)^d \times (0,T] \), the discrete and continuous norms satisfy
	\begin{align}
		\|f\|_{L^2(0,T;L^\tau(\Omega))} &\lesssim \|f\|^*_{L^2(0,T;L^\tau(\Omega))} + \|f\|_{\B}\tilde{m}^{-\alpha_\tau} \hat{m}^{-\beta_{\tau'}},\\
		\|f\|^*_{L^2(0,T;L^\tau(\Omega))} &\lesssim \|f\|_{L^2(0,T;L^\tau(\Omega))} + \|f\|_{\B}\tilde{m}^{-\alpha_\tau} \hat{m}^{-\beta_{\tau'}},
	\end{align}
	where \( \alpha_\tau := \frac{s}{d} - \left( \frac{1}{p} - \frac{1}{\tau} \right)_+ \) and \( \beta_\tau := \theta - \left( \frac{1}{p'} - \frac{1}{2} \right) \). 
\end{lemma}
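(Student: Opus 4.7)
The plan is to compare both norms against a common piecewise polynomial interpolant of $f$ and then invoke the interpolation error bound from Theorem \ref{$thm_2.2$} together with a finite dimensional norm equivalence. Let $S:=S^{*}_{k,k'}(f)$ be the tensor product Lagrange interpolant built on the grid $G^{r,r'}_{k,k'}$ with $r>\max\{s,1\}$ and $r'>\max\{\theta,1\}$. By construction $S(x_i,t_j)=f(x_i,t_j)$ at every data site, so
\[
\|S\|^{*}_{L^2(0,T;L^\tau(\Omega))}=\|f\|^{*}_{L^2(0,T;L^\tau(\Omega))}.
\]
This identity is the bridge that lets me swap $f$ with its (piecewise polynomial) interpolant when working with the discrete norm.

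The next step is a uniform norm equivalence on each simplex $E\times I'\in\T_{k,k'}$: since the polynomial space $\P_{r,r'}$ is finite dimensional, for any $P\in\P_{r,r'}$ one has
\[
\|P\|_{L^\tau(E\times I')}^{\tau}\asymp \frac{|E|\,|I'|}{n_r n_{r'}}\sum_{(x_i,t_j)\in \overline{E\times I'}\cap G_{k,k'}^{r,r'}}|P(x_i,t_j)|^{\tau},
\]
with constants depending only on $r,r',d,\tau$. Integrating the $L^\tau(\Omega)$ norm against $dt$, using that $|E|\asymp \tilde m^{-1}$ and $|I'|\asymp \hat m^{-1}$, and applying this equivalence on each simplex (with the obvious modifications for $\tau=\infty$) gives
\[
\|S\|_{L^2(0,T;L^\tau(\Omega))}\asymp \|S\|^{*}_{L^2(0,T;L^\tau(\Omega))},
\]
with constants independent of $k$ and $k'$. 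This is the only nontrivial ingredient; the main thing to check is that the normalization by $1/\tilde m$ and $1/\hat m$ inside the discrete norm precisely absorbs the factors $|E||I'|$ from the simplex equivalence, so the equivalence constants do not blow up with the level $(k,k')$.

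With these two observations, the result follows by triangle inequality. For the first bound, write
\[
\|f\|_{L^2(0,T;L^\tau(\Omega))}\leq \|f-S\|_{L^2(0,T;L^\tau(\Omega))}+\|S\|_{L^2(0,T;L^\tau(\Omega))},
\]
bound the first summand by Theorem \ref{$thm_2.2$}(i) (with $\tau'=2$, using an embedding of $B^{s}_{pq}(\Omega)$ into $B^{s}_{\tau\infty}(\Omega)$ when $\tau<p$ to reduce to the case $\tau\geq p$) to obtain $\lesssim \|f\|_{\B}\,\tilde m^{-\alpha_\tau}\hat m^{-\beta_{\tau'}}$, and control the second summand by the norm equivalence followed by $\|S\|^{*}=\|f\|^{*}$. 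The reverse inequality follows symmetrically:
\[
\|f\|^{*}_{L^2(0,T;L^\tau(\Omega))}=\|S\|^{*}_{L^2(0,T;L^\tau(\Omega))}\lesssim \|S\|_{L^2(0,T;L^\tau(\Omega))}\leq \|f\|_{L^2(0,T;L^\tau(\Omega))}+\|f-S\|_{L^2(0,T;L^\tau(\Omega))}.
\]
The only mild obstacle I anticipate is verifying the $k,k'$ uniform constants in the simplex level norm equivalence (standard but requires keeping track of the affine scalings), and handling the boundary cases $\tau=\infty$ and $\tau<p$ via appropriate Besov embeddings so that Theorem \ref{$thm_2.2$}(i) can be applied with the stated exponent $\alpha_\tau$.
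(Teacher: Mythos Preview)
Your proposal is correct and follows essentially the same route as the paper: construct the simplicial interpolant $S^{*}_{k,k'}(f)$, use finite-dimensional norm equivalence on each $E\times I'$ to obtain $\|S^{*}_{k,k'}(f)\|_{L^2(0,T;L^\tau(\Omega))}\asymp\|f\|^{*}_{L^2(0,T;L^\tau(\Omega))}$, invoke the approximation bound from Theorem~\ref{$thm_2.2$}(i), and finish with the triangle inequality. The paper's proof is slightly terser but the structure and ingredients are identical; your explicit remarks about tracking the affine scalings and handling $\tau<p$ via embedding are sound and match what is implicit in the paper.
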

\begin{proof}
	We prove the case \( 1 \leq \tau < \infty \), the argument for \( \tau = \infty \) is similar. Let \( S_{k,k'}^* \) denote the simplicial interpolant over \( \mathcal{T}_k(\Omega) \times \mathcal{I}_{k'} \), and apply \eqref{2.16}.
	\begin{align}
		\| S_{k,k'}^*(f) \|^2_{L^2(0,T;L^\tau(\Omega))} = \sum_{I^{'}\in \mathcal{I}_{k'}}^{}\left[\sum_{T \in \mathcal{T}_k} \| L_{E,I'}(f) \|_{L^2(0,T;L^{\tau}(T))}^\tau\right]^{\frac{2}{\tau}}, 
	\end{align}
	where \( L_{E,I'}(f) \in \mathcal{P}_{r,r'} \) interpolates \( \{ f(x_i,t_j) : (x_i,t_j) \in \bar E \times I' \} \). By norm equivalence 
	\begin{align}\label{6.3}
		\| L_{E,I'}(f) \|_{L^2(0,T;L^\tau(\Omega))} \asymp  \left[\frac{1}{\hat{m}}\sum_{j=1}^{\hat m}\left[\frac{1}{\tilde m} \sum_{x_i \in E} |f(x_i,t_j)|^\tau \right]^{2/\tau}\right]^{\frac{1}{2}}.
	\end{align}
	Summing over \( E \) and \( I' \), we obtain
	\begin{align}\label{6.4}
		\| S_{k,k'}^*(f) \|_{L^2(0,T;L^\tau(\Omega))} \asymp \hspace{-1mm}\left[\frac{1}{\hat{m}}\sum_{j=1}^{\hat m}\left( \frac{1}{m} \sum_{i=1}^{m} |f(x_i,t_j)|^\tau \right)^{\frac{2}{\tau}}\right]^{\frac{1}{2}} = \| f \|^*_{L^2(0,T;L^\tau(\Omega))}. 
	\end{align}
	For \( f \in \B \), the interpolant \( S_{k,k'}^*(f) \) achieves near-optimal recovery
	\begin{align}\label{6.5}
		\| f - S_{k,k'}^*(f) \|_{L^2(0,T;L^\tau(\Omega))} \leq C \| f \|_{\B} \tilde{m}^{-\alpha_\tau} \hat{m}^{-\beta_\tau}.
	\end{align}
	Hence
	\[ \| f \|_{L^2(0,T;L^\tau(\Omega))} - \| S_{k,k'}^*(f) \|_{L^2(0,T;L^\tau(\Omega))}| \leq C \| f \|_{\B}\tilde{m}^{-\alpha_\tau} \hat{m} ^{-\beta_\tau},\]
	and the result follows from \eqref{6.4} and \eqref{6.5}.
\end{proof}
\subsection{Discrete $H^{1/2,1/4}(\Omega)$ norm}\label{sec_6.2}
We define discrete \( H^{1/2,1/4}(\Sigma) \) norms that depend solely on function values at data sites, allowing direct computation. For any integers \( k, k' \geq 1 \), let \( \Y := \overline{G}_{k,k'}^{r,r'} = G_{k,k'}^{r,r'} \cap \Sigma \) denote the grid points on the lateral boundary \( \Sigma \), with total size \( \bar{m} \times \hat{m} = 2d[r2^k]^{d-1}[r'2^{k'}] \). For the model class \( \G = \operatorname{Tr}(U(\overline{\B})) \) as defined in \eqref{3.24}, the associated optimal recovery rate is given by
\begin{align}
	R^{*}_{\bar{m}}(\G)_{H^{1/2,1/4}(\Sigma)}  \asymp \bar{m}^{-\bar{\alpha}}\hat{m}^{-\beta_H},  \quad \bar{m}, \hat{m}\geq 1, \ \bar{\alpha}= \frac{\bar{s}-1}{d-1},\ \beta_H = \bar\theta.
\end{align}
This determines the smallest integers \( r > \max\{\bar{s},1\} \), \( r' > \max\{\bar{\theta},1\} \). For any continuous trace \( g = \operatorname{Tr}(v) \) with \( v \in \overline{\B} \), we define $\|g\|_{\operatorname{Tr}(\overline \B)} := \inf_{\operatorname{Tr}(v) = g} \|v\|_{\overline \B}.$ Then
\begin{align}
	|g|^2_{L^2(0,T;H^{1/2}(\partial\Omega))} &= \int_0^T \int_{\partial\Omega \times \partial\Omega} \frac{|g(x,t) - g(y,t)|^2}{|(x,t)-(y,t)|^d} \,dx\,dy\,dt,\label{6.9} \\
	|g|^2_{H^{1/4}(0,T;L^2(\partial\Omega))} &= \int_0^T \int_0^T \int_{\partial\Omega} \frac{|g(x,t) - g(x,s)|^2}{|(x,t)-(x,s)|^{3/2}} \,dx\,dt\,ds.\label{6.9.1}
\end{align}
Adding \( \|g\|_{L^2(0,T;L^2(\partial\Omega))} \) to both define the \( H^{1/2,1/4}(\Sigma) \) norm, known to be equivalent to the trace norm on \( \Omega_T \). Let \( \overline{S}_{k,k'} \) be as in \eqref{3.18}. From Theorem \eqref{thm_3.2} we have
\begin{align}\label{6.10}
	|\, \|g\|_{H^{1/2,1/4}(\Sigma)} - \|\overline{S}_{k,k'}(g)\|_{H^{1/2,1/4}(\Sigma)} | \leq C \|g\|_{\text{Tr}(\overline{\B})} \bar{m}^{-\bar{\alpha}} \hat{m}^{-\beta_H},
\end{align}
with constant \( C \) independent of \( \bar{m}, \hat{m}, g \). Hence, we now aim to construct a discrete \( H^{1/2,1/4}(\Sigma) \) norm for \( \overline{S}_{k,k'}(g) \), which depends only on values at the grid \( \overline{G}^{r,r'}_{k,k'} \). Define the dyadic partition \( \D_{k,k'}(\Sigma) := \Sigma \cap \D_{k,k'}(\bar{\Omega}_T) \), and let \( \T_{k,k'} := \T_{k,k'}(\Sigma) \) be the corresponding Kuhn-Tucker simplicial partition of \( \Sigma \). Let \( V^{r,r'}(\T_{k,k'}) \) be the space of continuous, piecewise polynomials of degree \( r, r' \) over \( \T_{k,k'} \), with \( \overline{S}_{k,k'} \) being the projector onto this space.
For \( g \in C(\Sigma) \), define discrete seminorms
\begin{align}\label{6.11}
	|g|^{*}_{L^2(0,T;H^{1/2}(\partial\Omega))} &:= \left[\frac{1}{\hat{m}\bar{m}^2}\sum_{l=1}^{\hat{m}}\sum_{i\neq j}^{}\frac{ |g(x_i,t_l)-g(x_j,t_l)|^2}{|(x_i,t_k)-(x_j,t_l)|^d}\right]^{1/2},\\
	|g|^{*}_{H^{1/4}(0,T;L^2(\partial\Omega))} &:= \left[\frac{1}{\hat{m}^2\bar{m}}\sum_{j\neq l}^{}\sum_{i}^{\bar{m}}\frac{ |g(x_i,t_l)-g(x_i,t_l)|^2}{|(x_i,t_j)-(x_i,t_l)|^{3/2}}\right]^{1/2},
\end{align}
and show that these discrete seminorms are equivalent to their continuous counterparts on \( V^{r,r'}(\T_{k,k'}) \), beginning with the following lemma.
\begin{lemma}\label{lemma_6.2}
	Let \( E \times I,\, E' \times I' \in \T_{k,k'} \) and \( S \in V^{r,r'}(\T_{k,k'}) \). Then,
	\begin{align}\label{6.12}
		\Bigg[\frac{1}{\bar{m}^2\hat{m}}\sum_{l=1}^{\hat{m}}\sum_{\substack{i \ne j}}^{}\frac{|S(x_i,t_l)- S(x_j,t_l)|^2} {|(x_i,t_l) -(x_j,t_l)| ^d} \Bigg]^{\frac{1}{2}} \asymp \left[\int_{0}^{T}\hspace{-0.2cm}\int_{E\times E'}^{}\frac {|S(x,t)- S(x',t)|^2} {|(x,t) - (x',t)| ^d}\, dx\,dx'dt\right]^{\frac{1}{2}},
	\end{align}
	for \((x_i,t_l) \in \bar E\times I,\, (x_j,t_l) \in \bar E'\times I\) and constants in \( \asymp \) depend on \( r, r' \), and \( d \).
\end{lemma}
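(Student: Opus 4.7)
The plan is to reduce \eqref{6.12} to an equivalence between two continuous seminorms on a fixed finite-dimensional polynomial space via scaling to a reference configuration, and then invoke the fact that any two seminorms with the same kernel on a finite-dimensional space are equivalent. First I would set $h_E := \mathrm{diam}(E) \asymp \mathrm{diam}(E') \asymp 2^{-k}$ and $h_I := |I| \asymp 2^{-k'}$, and introduce affine bijections $\phi_E, \phi_{E'}$ from fixed reference simplices $\hat E, \hat E'$ onto $E, E'$, together with $\phi_I \colon [0,1] \to I$. Under the change of variables $(x, x', t) = (\phi_E(\hat x), \phi_{E'}(\hat x'), \phi_I(\hat t))$ the right-hand side of \eqref{6.12} picks up a factor $h_E^{2(d-1)} h_I$ from $dx\,dx'\,dt$, and the denominator becomes $|\phi_E(\hat x) - \phi_{E'}(\hat x')|^d$. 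The key numerical match is that the prefactor $1/(\bar m^2 \hat m)$ on the left-hand side also equals $h_E^{2(d-1)} h_I$ up to $r,r',d$-dependent constants because $\bar m \asymp 2^{k(d-1)}$ and $\hat m \asymp 2^{k'}$; moreover, the constraint that the summation indices lie in $\bar E \times I$ and $\bar E' \times I$ restricts the sum to a fixed finite collection of reference nodes $(\hat x_i, \hat x_j, \hat t_l)$, with identical denominators $|\phi_E(\hat x_i) - \phi_{E'}(\hat x_j)|^d$.

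Second, on the reference configuration I would verify the equivalence directly. Setting $\hat S(\hat x, \hat t) := S(\phi_E(\hat x), \phi_I(\hat t))$, and analogously on $\hat E'$, the pair $(\hat S|_{\hat E}, \hat S|_{\hat E'})$ lies in the finite-dimensional space $\mathcal{P}_{r,r'} \times \mathcal{P}_{r,r'}$. Both sides of the rescaled equivalence are continuous, nonnegative quadratic forms on this fixed finite-dimensional space, so it suffices to check that their kernels coincide. The continuous form vanishes iff $\hat S(\hat x, \hat t) = \hat S(\hat x', \hat t)$ for almost every $(\hat x, \hat x', \hat t)$, i.e.\ iff $\hat S|_{\hat E}$ and $\hat S|_{\hat E'}$ both reduce to the same polynomial $\psi(\hat t) \in \mathcal{P}_{r'}$ independent of the spatial variable. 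The discrete form vanishes iff $\hat S(\hat x_i, \hat t_l) = \hat S(\hat x_j, \hat t_l)$ for every pair of distinct reference nodes in $\hat E, \hat E'$ and every time node $\hat t_l$; by unisolvence of the reference tensor grid for $\mathcal{P}_{r,r'}$, this polynomial identity enforces the same conclusion. Once the kernels coincide, the equivalence on the quotient follows with constants depending only on $r, r', d$. Multiplying the reference equivalence through by $h_E^{2(d-1)} h_I$ and taking square roots then restores \eqref{6.12}.

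The main obstacle I expect is handling the denominator $|x - x'|^d$ when $E$ and $E'$ are not adjacent, so that $|x-x'|$ is not simply $\asymp h_E |\hat x - \hat x'|$: the point is that this denominator transforms identically on the two sides of \eqref{6.12}, so the geometric weight $|\phi_E(\hat x) - \phi_{E'}(\hat x')|^{-d}$ appears in exactly the same location in both reference expressions and enters as a bounded, continuous function on $\hat E \times \hat E'$ away from the diagonal. A second delicate point arises when $\bar E \cap \bar E'$ is nonempty: a grid node may belong to both $\bar E$ and $\bar E'$, and the constraint $i \neq j$ has to be interpreted as excluding identical points, which matches the fact that $\{x = x'\}$ has measure zero on the continuous side, so no singularity is actually sampled.
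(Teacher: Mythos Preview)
Your reduction to a finite-dimensional kernel argument is the right instinct, and it matches the paper's core idea. However, the proposal has a genuine gap in establishing that the equivalence constants depend only on $r,r',d$ and not on the pair $(E,E')$.

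The issue is this: when you pull back via separate affine maps $\phi_E,\phi_{E'}$, the weight becomes $w(\hat x,\hat x')=|\phi_E(\hat x)-\phi_{E'}(\hat x')|^{-d}$, which still depends on the relative position of $E$ and $E'$. Your kernel argument then says that the two $w$-weighted quadratic forms on $\mathcal{P}_{r,r'}\times\mathcal{P}_{r,r'}$ are equivalent because they share a kernel, but this produces constants that depend on the particular $w$, hence on $(E,E')$. Your remark that the weight ``appears in exactly the same location in both reference expressions'' does not neutralize this: two seminorms with the same weight and the same kernel are equivalent, but the equivalence constants can degenerate as the weight varies over an infinite family. Since the relative position of two level-$k$ simplices in the boundary partition ranges over infinitely many configurations (they can sit at arbitrarily large dyadic separation), a uniform bound does not follow from the kernel argument alone.

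The paper closes this gap by a case split. When $\bar E\cap\bar E'\neq\emptyset$, the pair $(E,E')$ is, up to a single rigid motion and dilation, one of finitely many reference pairs in $\mathcal{T}_{4,4}$; the kernel/compactness argument then runs on a \emph{finite} list of weights, giving uniform constants. When $\bar E\cap\bar E'=\emptyset$, all pairwise distances $|x-x'|$ with $x\in E,\ x'\in E'$ are mutually comparable (their ratio is bounded by a dimensional constant), so the singular weight can be pulled out and the statement reduces to the unweighted $L^2$ discretization
\[
\frac{1}{\bar m^2\hat m}\sum_{l}\sum_{i\ne j}|S(x_i,t_l)-S(x_j,t_l)|^2\ \asymp\ \int_I\int_{E\times E'}|S(x,t)-S(x',t)|^2\,dx\,dx'\,dt,
\]
which is exactly your kernel argument but now with a fixed (unit) weight. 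Incorporating this dichotomy into your outline would make it complete; without it, the uniformity claim in your last sentence of the second paragraph is unsupported.
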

\begin{proof}
	Fix \( d, r, r' \), and \( E \times I, E' \times I' \in \mathcal{T}_{k,k'}(\Sigma) \). Let \( R(S) \) and \( R'(S) \) be the discrete and integral expressions from \eqref{6.12}, defined on the finite-dimensional space \( X := X(E \times I, E' \times I') \subset V^{r,r'}(\mathcal{T}_{k,k'}) \). For \( k, k' \leq 4 \), both \( R \) and \( R' \) vanish only on constants over \( E \times I \cup E' \times I' \), and satisfy uniform norm equivalence on \( X \) with constants depending only on \( r, r', d \).
	\begin{align}\label{6.13}
		c R(S) \leq R'(S) \leq C R(S), \quad S \in X.
	\end{align}
	
	For \( k, k' > 4 \), define the best constants \( c(E, E', I, I') \) and \( C(E, E', I, I') \) for which \eqref{6.13} holds. We aim to show there exist uniform constants \( c^*, C^* > 0 \) (depending on \( r, r', d \)) such that $	c(E, E', I, I') \geq c^*, \ C(E, E', I, I') \leq C^*$.  We consider two cases:
	
	\textbf{Case 1:} \( \bar{E} \times \bar{I} \cap \bar{E}' \times \bar{I}' \neq \emptyset \). Then there exists a rigid transformation mapping a reference pair \( E_0 \times I_0, E_0' \times I_0' \in \mathcal{T}_{4,4} \) onto \( E \times I, E' \times I' \). Changing variables in \eqref{6.12} yields the same norm equivalence with constants \( c_1, C_1 \).
	
	\textbf{Case 2:} \( \bar{E} \times \bar{I} \cap \bar{E}' \times \bar{I}' = \emptyset \). From $L^2$ norm discretization, there exist constants \( c_2, C_2 > 0 \) (depending only on \( r, r', d \)) such that for all \( S \in V^{r,r'}(\mathcal{T}_{k,k'}(\Sigma) \),
	\begin{align*}
		&c_2 \left[ \int_{I}^{}\int_{E \times E'}\hspace{-1mm}|S(x,t) - S(x',t)|^2 \, dx \, dx' dt \right]^{\frac{1}{2}}\leq \\ \Bigg[ &\frac{1}{\bar{m}^2\hat{m}}\sum_{I\in \I_{k'}}^{} \sum_{\substack{x \neq x'}} |S(x,t)- S(x',t)|^2 \Bigg]^{\frac{1}{2}}\hspace{-1mm}\leq C_2 \left[  \int_{I}^{}\int_{E \times E'}\hspace{-1mm}|S(x,t) - S(x',t)|^2 \, dx \, dx' dt \right]^{\frac{1}{2}}.\notag
	\end{align*}
	In this setting, all pairwise distances \( |(x,t) - (x',t)| \) are uniformly comparable, so \eqref{6.12} holds with constants depending only on \( r, r', d \). This completes the proof.
\end{proof}
\begin{lemma}
	For all pair $E\times I, E'\times I'\in \T_{k,k'}$ and each $S\in V^{k,k'}(\T_{k,k'})$,
	\begin{align*}
		\Bigg[\frac{1}{\bar{m}\hat{m}^2}\sum_{\substack{j \ne l}}^{}\sum_{i = 1}^{\bar{m}}\frac{|S(x_i,t_j)- S(x_i,t_l)|^2} {|(x_i,t_j) -(x_i,t_l)| ^{3/2}} \Bigg]^{\frac{1}{2}} \asymp \left[\int_{I\times I'}^{}\int_{E}^{}\frac {|S(x,t)- S(x,t')|^2} {|(x,t) - (x,t')| ^{3/2}}\, dx\, dt\, dt'\right]^{\frac{1}{2}},
	\end{align*}
	for \((x_i,t_j) \in \bar E\times I,\, (x_i,t_l) \in \bar E\times I'\) and the constants in $\asymp$ depends on $r, r'$ and $d$.
\end{lemma}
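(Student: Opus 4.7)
The plan is to follow the template of Lemma~\ref{lemma_6.2} with the roles of the spatial and temporal variables exchanged. Let $R(S)$ denote the discrete expression on the left-hand side and $R'(S)$ the double integral on the right; both are seminorms on the finite-dimensional space $X := V^{r,r'}(\T_{k,k'})\big|_{\bar E \times (\bar I \cup \bar I')}$, and both vanish precisely when $S$ is constant in $t$ on $\bar E \times (\bar I \cup \bar I')$. A finite-dimensional norm-equivalence argument therefore yields constants $c(E,I,I')$ and $C(E,I,I')$ with $cR(S)\leq R'(S)\leq CR(S)$ on $X$; the task is to show these constants can be chosen uniformly in $k,k'$, depending only on $r,r',d$.

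For $k,k'\leq 4$ only finitely many element configurations arise and uniformity is automatic. For $k,k'>4$, split into two cases according to whether $\bar I \cap \bar I'$ is empty. In Case 1 ($\bar I \cap \bar I' \ne \emptyset$), the pair $E\times(I\cup I')$ is the affine image of a reference configuration $E_0\times(I_0\cup I_0')$ drawn from $\T_{4,4}$. Applying the change of variables $t=2^{-k'+4}\sigma+\xi_{I'}$ (together with the analogous spatial scaling), the denominator $|t_j-t_l|^{3/2}$ and the measure $dt\,dt'$ contribute opposite powers of the dilation factor, and the same cancellation takes place in the discrete sum after accounting for the number of grid points inside $\bar E\times I$ and $\bar E\times I'$. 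Consequently $R$ and $R'$ scale by identical factors, so the equivalence on the reference pair with constants $c_1,C_1$ transfers verbatim to $E\times(I\cup I')$.

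In Case 2 ($\bar I \cap \bar I' = \emptyset$), all temporal distances satisfy $|t_j-t_l|\asymp\rho$ uniformly for $t_j\in I, t_l\in I'$, with $\rho:=\mathrm{dist}(I,I')+\max(\ell_I,\ell_{I'})$; the same holds for $|t-t'|$ when $t\in I, t'\in I'$. Hence the factor $|t_j-t_l|^{-3/2}\asymp\rho^{-3/2}$ can be pulled out of both $R(S)$ and $R'(S)$, and what remains is the standard discrete-to-continuous $L^2$ equivalence
\[
\Bigg[\frac{1}{\bar m\hat m^2}\sum_{t_j\in I}\sum_{t_l\in I'}\sum_{i=1}^{\bar m}|S(x_i,t_j)-S(x_i,t_l)|^2\Bigg]^{1/2}\asymp\Bigg[\int_{I\times I'}\int_E|S(x,t)-S(x,t')|^2\,dx\,dt\,dt'\Bigg]^{1/2}
\]
for elements of $V^{r,r'}(\T_{k,k'})$, whose constants depend only on $r,r',d$; see the discrete $L^2$ norm equivalence used within the proof of Lemma~\ref{lemma_6.2}.

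The step I expect to be most delicate is the bookkeeping in Case 1: one must verify that the global prefactor $1/(\bar m\hat m^2)$ in the discrete sum matches the appropriate powers of $2^{-k'}$ and $2^{-kd}$ arising from the change of variables in $R'$, so that the equivalence constants truly do not pick up any hidden dependence on $k,k'$. Once this is checked, the uniform bounds on $c(E,I,I')$ and $C(E,I,I')$ in both cases follow, and the lemma is established.
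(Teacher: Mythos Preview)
Your proposal is correct and follows exactly the approach the paper intends: the paper's own proof consists of the single sentence ``The proof proceeds similarly to Lemma~\ref{lemma_6.2},'' and your argument is precisely that adaptation with the spatial and temporal roles interchanged. The two-case split (touching versus disjoint temporal intervals), the affine reduction to a reference configuration in $\T_{4,4}$, and the reduction to an $L^2$ equivalence when the intervals are separated all mirror Lemma~\ref{lemma_6.2} faithfully.
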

\begin{proof}
	The proof proceeds similarly to Lemma \eqref{lemma_6.2}.
\end{proof}
We now show that the discrete and true semi-norms are equivalent for functions in $\G$.
\begin{theorem}
	Let \( g \in \G = \mathrm{Tr}(U(\overline{\B})) \), with \( \overline{\B} = B^{\bar\theta}_{\bar{p}',\infty}(0,T;B^{\bar{s}}_{\bar{p}, \infty}) \), where \( \bar{s} > d/2 \) and \( \bar{\theta} > 1/2 \). Then for all \( \bar{m}, \hat{m} \geq 1 \),
	\begin{align}\label{6.15}
		|g|_{H^{1/2,1/4}(\Omega)}&\lesssim |g|^*_{H^{1/2,1/4}(\Sigma)} + \|g\|_{\text{Tr}(\overline{\B})} \bar{m}^{-\bar{\alpha}} \hat{m}^ {-\beta_H}, \notag\\
		|g|^*_{H^{1/2,1/4}(\Omega)}&\lesssim |g|_{H^{1/2,1/4}(\Sigma)} + \|g\|_{\text{Tr}(\overline{\B})} \bar{m}^{-\bar{\alpha}} \hat{m}^ {-\beta_H},
	\end{align}
	with \( \bar{\alpha} = \frac{\bar{s}-1}{d-1} \), \( \beta_H = \bar{\theta} \), and constants depending only on \( r, r', d \).
\end{theorem}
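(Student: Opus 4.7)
The plan is to mirror the structure of Lemma~\ref{lemma_6.1}: reduce the problem to a norm equivalence on the finite-dimensional space $V^{r,r'}(\mathcal{T}_{k,k'})$, use the pairwise lemmas as the workhorse, and then transfer the result from the interpolant back to $g$ using the optimal-recovery estimate \eqref{6.10}. First, since $\overline{S}_{k,k'}(g)$ interpolates $g$ at every node of $\overline{G}^{r,r'}_{k,k'}$, the discrete seminorms evaluated on $g$ and on $\overline{S}_{k,k'}(g)$ are literally identical:
\[
|g|^*_{H^{1/2,1/4}(\Sigma)} \;=\; |\overline{S}_{k,k'}(g)|^*_{H^{1/2,1/4}(\Sigma)}.
\]
This turns the question into a norm equivalence for the piecewise polynomial $S := \overline{S}_{k,k'}(g) \in V^{r,r'}(\mathcal{T}_{k,k'})$.

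Second, I would split both the continuous double integrals \eqref{6.9}--\eqref{6.9.1} and the discrete double sums \eqref{6.11} according to which pair of simplices $(E\times I, E'\times I')\in \mathcal{T}_{k,k'}\times \mathcal{T}_{k,k'}$ the two arguments belong to. On each pair, Lemma~\ref{lemma_6.2} (for the spatial seminorm) and its temporal analogue give pointwise equivalence, with constants depending only on $r,r',d$. Summing these local equivalences over all ordered pairs of simplices, and noting that the normalizations $\bar m^{-2}\hat m^{-1}$ and $\bar m^{-1}\hat m^{-2}$ correctly reflect the total number of sample pairs (each simplex carries $O(1)$ grid nodes since $r,r'$ are fixed), yields
\[
|S|_{H^{1/2}(\partial\Omega),L^2(0,T)} \;\asymp\; |S|^*_{H^{1/2}(\partial\Omega),L^2(0,T)}, \qquad
|S|_{L^2(\partial\Omega),H^{1/4}(0,T)} \;\asymp\; |S|^*_{L^2(\partial\Omega),H^{1/4}(0,T)},
\]
so that $|S|_{H^{1/2,1/4}(\Sigma)} \asymp |S|^*_{H^{1/2,1/4}(\Sigma)}$, uniformly in $k,k'$.

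Third, to pass from $S = \overline{S}_{k,k'}(g)$ back to $g$, I would use the reverse triangle inequality in the seminorm $|\cdot|_{H^{1/2,1/4}(\Sigma)}$ together with the approximation bound \eqref{6.10} (which comes from Theorem~\ref{thm_3.2}):
\[
\bigl|\,|g|_{H^{1/2,1/4}(\Sigma)} - |\overline{S}_{k,k'}(g)|_{H^{1/2,1/4}(\Sigma)}\bigr|
\;\leq\; \|g - \overline{S}_{k,k'}(g)\|_{H^{1/2,1/4}(\Sigma)}
\;\leq\; C\,\|g\|_{\mathrm{Tr}(\overline{\mathcal{B}})}\,\bar m^{-\bar\alpha}\hat m^{-\beta_H}.
\]
Combining this with the identity from the first step and the equivalence from the second step yields both inequalities in \eqref{6.15}.

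The main obstacle is the pairwise summation in the middle step. For well-separated pairs $(E\times I, E'\times I')$ the singular kernels $|x-x'|^{-d}$ and $|t-t'|^{-3/2}$ are essentially constant on the pair, so the discrete-to-continuous passage reduces to a standard $L^2$ quadrature argument; for adjacent or identical pairs the kernels are singular, and one must rely on the reference-simplex normalization built into Lemma~\ref{lemma_6.2}. Keeping the constants uniform across both regimes as $k,k'$ vary, and ensuring that the interaction between the two different scalings ($d$ in space and $3/2$ in time) does not produce a logarithmic loss when summed, is the delicate bookkeeping that I expect to require the most care.
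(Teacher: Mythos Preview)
Your proposal is correct and follows essentially the same route as the paper: establish the identity $|g|^*_{H^{1/2,1/4}(\Sigma)} = |\overline{S}_{k,k'}(g)|^*_{H^{1/2,1/4}(\Sigma)}$, use Lemma~\ref{lemma_6.2} and its temporal analogue summed over simplex pairs to obtain the uniform equivalence $|S|_{H^{1/2,1/4}(\Sigma)} \asymp |S|^*_{H^{1/2,1/4}(\Sigma)}$ on $V^{r,r'}(\mathcal{T}_{k,k'})$, and then transfer back to $g$ via \eqref{6.10}. Your worry about a logarithmic loss from the interaction of the two kernels is unnecessary, since the spatial and temporal seminorms are handled separately and Lemma~\ref{lemma_6.2} already absorbs the near/far dichotomy into constants depending only on $r,r',d$.
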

\begin{proof}
	For any \( S \in V^{r,r'}(\T_{k,k'}) \), the continuous semi-norm is
	\begin{align}
		|S|_{H^{1/2,1/4}(\Sigma)} = &\left[\sum_{I\in \I_{k'}}^{}\sum_{E\times I,\,E'\times I\in \T_{k,k'}}^{}\int_{I}^{}\int_{E\times E'}^{}\frac{|S(x,t)-S(x',t)|^2}{|(x,t)-(x',t)|^d}\right]^{1/2} \notag\\&+  \left[\sum_{I,I'\in \I_{k'}}^{}\sum_{E\times I \in \T_{k,k'}}^{}\int_{I\times I'}^{}\int_{E}^{}\frac{|S(x,t)-S(x,t')|^2}{|(x,t)-(x,t')|^{3/2}}\right]^{1/2}. 
	\end{align}
	The discrete semi-norm satisfies
	\begin{align*}
		&|S|^*_{H^{1/2,1/4}(\Sigma)}\asymp\\
		&\Bigg[\sum_{\substack{(x,t)\ne (x',t)}} \hspace{-1mm} \frac{1}{\bar m^2\hat{m}}\frac{|S(x,t) - S(x',t)|^2 }{|(x,t) - (x',t)|^d}\Bigg]^{\frac{1}{2}}+  \Bigg[\sum_{\substack{(x,t)\ne (x,t')}} \hspace{-1mm}\frac{1}{\bar m\hat{m}^2}\frac{|S(x,t) - S(x,t')|^2 }{|(x,t) - (x,t')|^{3/2}}\Bigg]^{\frac{1}{2}},
	\end{align*}
	for each \(E\times I \in \mathcal{T}_{k,k'}\) and \((x,t) \in \overline G^{r,r'}_{k,k'}\). The equivalence above holds (up to constants depending only on \( r, r', d \)) due to shared data sites. By Lemma~\ref{lemma_6.2}, there exists \( c, C > 0 \) depending only on \( r, r', d \) such that
	\begin{align}\label{6.17}
		c \, |S|_{H^{1/2,1/4}(\Sigma)} \leq |S|^{*}_{H^{1/2,1/4}(\Sigma)} \leq C \, |S|_{H^{1/2,1/4}(\Sigma)}.
	\end{align}
	In particular, this holds for \( S = S_{k,k'}(g) \) with \( g \in \G \), and combining with \eqref{6.10} and the identity \( |g|^*_{H^{1/2,1/4}(\Sigma)} = |\overline{S}_{k,k'}(g)|^*_{H^{1/2,1/4}(\Sigma)} \) yields the result.
\end{proof}

To define a discrete norm on \( H^{1/2,1/4}(\Sigma) \), we introduce discrete \( L^2(0,T; L^2(\partial \Omega)) \) norm for functions in \( \mathcal{G} \), using the data sites \( \Y := G^{r,r'}_{k,k'} \)
\begin{align}\label{6.18}
	\|g\|^{*}_{L^2(0,T;L^2(\partial \Omega))} := \bigg[ \frac{1}{\bar{m} \hat{m}} \sum_{i=1}^{\bar{m}} \sum_{j=1}^{\hat{m}} |g(x_i, t_j)|^2 \bigg]^{1/2}, \quad g \in \mathcal{G}.
\end{align}
Following arguments from Section \ref{sec_6.1}, for \( \alpha = \frac{\bar{s} - 1/2}{d - 1} > \bar{\alpha} \), we obtain for all \( g \in \operatorname{Tr}(\overline{\B}) \),
\begin{align}\label{6.19}
	\|g\|_{L^2(0,T;L^2(\partial \Omega))} &\lesssim \|g\|^*_{L^2(0,T;L^2(\partial \Omega))} + \|g\|_{\operatorname{Tr}(\overline{\B})} \, \bar{m}^{-\alpha} \hat{m}^{-\beta_H}, \notag \\
	\|g\|^*_{L^2(0,T;L^2(\partial \Omega))} &\lesssim \|g\|_{L^2(0,T;L^2(\partial \Omega))} + \|g\|_{\operatorname{Tr}(\overline{\B})} \, \bar{m}^{-\alpha} \hat{m}^{-\beta_H}.
\end{align}
We now define the discrete \( H^{1/2,1/4}(\Sigma) \) norm as
\begin{align}\label{6.20}
	\|g\|^*_{H^{1/2,1/4}(\Sigma)} := 2\|g\|^*_{L^2(0,T;L^2(\partial \Omega))} + |g|^*_{H^{1/2,1/4}(\Sigma)}.
\end{align}
\begin{theorem}\label{thm_6.3}
	Let \( g \in \mathcal{G} = \operatorname{Tr}(U(\overline{\B})) \), with \( \overline{\B} = B^{\bar{\theta}}_{\bar{p}', \bar{q}'}(0,T; B^{\bar{s}}_{\bar{p}, \bar{q}}(\Omega)) \), where \( \bar{s} > d/2 \), \( \bar{\theta} > 1/2 \), and \( \bar{m}, \hat{m} \geq 1 \). Then,
	\begin{align}\label{6.21}
		\|g\|_{H^{1/2,1/4}(\Sigma)} &\lesssim \|g\|^*_{H^{1/2,1/4}(\Sigma)} + \|g\|_{\operatorname{Tr}(\overline{\B})} \, \bar{m}^{-\bar{\alpha}} \hat{m}^{-\beta_H}, \notag \\
		\|g\|^*_{H^{1/2,1/4}(\Sigma)} &\lesssim \|g\|_{H^{1/2,1/4}(\Sigma)} + \|g\|_{\operatorname{Tr}(\overline{\B})} \, \bar{m}^{-\bar{\alpha}} \hat{m}^{-\beta_H},
	\end{align}
	where the constants depend only on \( r, r' \), and \( d \).
\end{theorem}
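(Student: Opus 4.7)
The plan is to derive Theorem~\ref{thm_6.3} as a direct consequence of the two previously established equivalences: the discrete $L^2(0,T;L^2(\partial\Omega))$ norm estimate \eqref{6.19} and the discrete seminorm estimate \eqref{6.15}. Since the continuous norm decomposes additively as $\|g\|_{H^{1/2,1/4}(\Sigma)} = \|g\|_{L^2(0,T;L^2(\partial\Omega))} + |g|_{H^{1/2,1/4}(\Sigma)}$, and its discrete counterpart is built from the corresponding discrete pieces (with the deliberately chosen coefficient $2$ on the $L^2$ part, which absorbs the implicit constant from \eqref{6.19}), a triangle inequality applied term by term should produce the desired two-sided bound.

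First I would write, using \eqref{6.19} and \eqref{6.15},
\begin{align*}
\|g\|_{H^{1/2,1/4}(\Sigma)} &= \|g\|_{L^2(0,T;L^2(\partial\Omega))} + |g|_{H^{1/2,1/4}(\Sigma)} \\
&\lesssim \|g\|^{*}_{L^2(0,T;L^2(\partial\Omega))} + |g|^{*}_{H^{1/2,1/4}(\Sigma)} + \|g\|_{\operatorname{Tr}(\overline{\B})}\bigl(\bar{m}^{-\alpha}+\bar{m}^{-\bar\alpha}\bigr)\hat{m}^{-\beta_H},
\end{align*}
and observe that, by construction, $\alpha = \tfrac{\bar s - 1/2}{d-1} > \tfrac{\bar s - 1}{d-1} = \bar\alpha$, so for $\bar m \geq 1$ we have $\bar m^{-\alpha} \leq \bar m^{-\bar\alpha}$. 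Hence the error term collapses to $\|g\|_{\operatorname{Tr}(\overline{\B})} \bar m^{-\bar\alpha}\hat m^{-\beta_H}$, while the leading discrete piece is bounded above by $\|g\|^{*}_{H^{1/2,1/4}(\Sigma)}$ because of the factor $2$ in \eqref{6.20}. The reverse inequality is obtained symmetrically: I would bound $2\|g\|^{*}_{L^2(0,T;L^2(\partial\Omega))}$ and $|g|^{*}_{H^{1/2,1/4}(\Sigma)}$ separately via the reverse halves of \eqref{6.19} and \eqref{6.15}, add them, and again use $\bar m^{-\alpha}\le \bar m^{-\bar\alpha}$ to reduce the two residual terms to a single optimal-recovery contribution.

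The argument is essentially bookkeeping on constants, so there is no deep obstacle. The only subtle point worth being explicit about is that the coefficient $2$ in the definition \eqref{6.20} of the discrete norm is exactly what converts the $\lesssim$ in \eqref{6.19} into a clean majorization in both directions, so one must track this constant carefully rather than pretend $\|\cdot\|^*_{L^2}$ and $\|\cdot\|_{L^2}$ are interchangeable. All implicit constants depend only on $r, r', d$ through \eqref{6.15} and \eqref{6.19}, which matches the claim of Theorem~\ref{thm_6.3}.
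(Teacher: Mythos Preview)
Your proposal is correct and matches the paper's own proof, which simply states that the result follows by combining \eqref{6.15} and \eqref{6.19} together with the observation that $\alpha > \bar\alpha$. Your remark about the coefficient $2$ in \eqref{6.20} is a slight over-reading---since the estimates are only claimed up to constants ($\lesssim$), any fixed numerical factor is absorbed automatically---but this does not affect the validity of the argument.
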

\begin{proof}
	The result follows by combining proofs in \eqref{6.15} and \eqref{6.19}, with \( \alpha > \bar{\alpha} \).
\end{proof}

\subsection{A Discrete \( L^2(\Omega) \) Norm}
Let \( \mathcal{M} = U(B^{\check{s}}_{\check{p}\check{q}}(\Omega)) \) with \( \check{s} > \frac{d}{\check{p}} \), representing the model class for \( u_0 \). Consider the uniform grid \( G_{k,r} \subset \Omega \times \{t=0\} \) with \( \tilde{m} := |G_{k,r}| = [r2^k]^d \), where \( k \geq 0 \) and \( r > \max(s,1) \). For any continuous \( u_0 \), define the discrete norm
\[
\|u_0\|^{*}_{L^2(\Omega)} := \bigg( \frac{1}{\tilde{m}} \sum_{j=1}^{\tilde{m}} |u(x_j,t_0)|^2 \bigg)^{1/2}, \quad x_j \in G_{k,r}.
\]
\begin{lemma}\label{lemma_u00}
	Let \( \B_0 = B^{\check{s}}_{\check{p}\check{q}}(\Omega) \), \( \Omega = (0,1)^d \), and \( \check{s} > \frac{d}{\check{p}} \). Then for \( u_0 \in \B_0 \),
	\begin{align}
		\|u_0\|_{L^2(\Omega)} \lesssim \|u_0\|^{*}_{L^2(\Omega)} + \|u_0\|_{\B_0} \, \tilde m^{-\alpha},
		\
		\|u_0\|^{*}_{L^2(\Omega)} \lesssim \|u_0\|_{L^2(\Omega)} + \|f\|_{\B_0} \, \tilde m^{-\alpha},
	\end{align}
	with \( \alpha := \frac{s}{d} - \left( \frac{1}{p} - \frac{1}{2} \right) \), the optimal recovery rate of \( \mathcal{M} \) in \( L^2(\Omega) \). 
\end{lemma}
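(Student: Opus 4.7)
The plan is to mirror the two-sided argument used in Lemma \ref{lemma_6.1}, but in the purely spatial setting, since the grid $G_{k,r}\subset\Omega\times\{t=0\}$ carries no temporal component. Let $S^*_k(u_0)$ denote the continuous piecewise polynomial interpolant of $u_0$ on the Kuhn--Tucker simplicial partition $\T_k(\Omega)$, built from the values $\{u_0(x_j)\}_{x_j\in G_{k,r}}$ via the spatial Lagrange elements $\phi_i$ of degree $r$. Since $\check{s}>d/\check{p}$, the Besov space $\B_0$ embeds continuously into $C(\bar\Omega)$, so all point evaluations are meaningful.

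First I would establish the local norm equivalence on each simplex $E\in\T_k$: because the restriction $S^*_k(u_0)|_E$ lies in the finite-dimensional polynomial space $\P_r^d$, a scaling argument from a reference simplex (analogous to \eqref{6.3}--\eqref{6.4}) yields
\begin{align*}
\|S^*_k(u_0)\|_{L^2(\Omega)}^2 \;=\; \sum_{E\in\T_k}\|L_E(u_0)\|_{L^2(E)}^2 \;\asymp\; \frac{1}{\tilde m}\sum_{j=1}^{\tilde m}|u_0(x_j)|^2 \;=\; \bigl(\|u_0\|^*_{L^2(\Omega)}\bigr)^2,
\end{align*}
with constants depending only on $r$ and $d$. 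This gives the key identification $\|S^*_k(u_0)\|_{L^2(\Omega)}\asymp \|u_0\|^*_{L^2(\Omega)}$.

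Next I invoke the optimal recovery bound for $\M$ stated in Theorem~\ref{thm_u0} (which is precisely the one-variable analogue of case (iii) of Theorem~\ref{$thm_2.2$} applied to spatial Besov regularity): the interpolant satisfies
\begin{align*}
\|u_0 - S^*_k(u_0)\|_{L^2(\Omega)} \;\leq\; C\,\|u_0\|_{\B_0}\,\tilde m^{-\alpha},\qquad \alpha=\tfrac{\check s}{d}-\bigl(\tfrac{1}{\check p}-\tfrac{1}{2}\bigr).
\end{align*}
The two claimed inequalities then follow from the reverse triangle inequality
\begin{align*}
\bigl|\,\|u_0\|_{L^2(\Omega)} - \|S^*_k(u_0)\|_{L^2(\Omega)}\,\bigr| \;\leq\; \|u_0 - S^*_k(u_0)\|_{L^2(\Omega)} \;\leq\; C\,\|u_0\|_{\B_0}\,\tilde m^{-\alpha},
\end{align*}
combined with the equivalence $\|S^*_k(u_0)\|_{L^2(\Omega)}\asymp\|u_0\|^*_{L^2(\Omega)}$ obtained above.

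The only subtle point, and the part where care is needed, is ensuring that the constants in the local norm equivalence on $E$ are truly independent of $k$: this requires the standard affine rescaling from a fixed reference simplex together with the observation that the Lagrange nodes on the reference element are unisolvent for $\P_r^d$, so the $\ell^2$ average of nodal values and the $L^2$ norm are equivalent norms on this finite-dimensional space. All other steps are routine, and the argument is notationally simpler than in Lemma~\ref{lemma_6.1} because no temporal summation or tensor structure is involved.
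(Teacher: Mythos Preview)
Your proposal is correct and follows exactly the approach the paper intends: the paper's own proof is simply a citation to \cite[Lemma~6.1]{BVPS} specialized to $\tau=2$, and your argument is precisely the spatial version of the Lemma~\ref{lemma_6.1} proof (interpolant $S^*_k$, local polynomial norm equivalence, recovery estimate, reverse triangle inequality). You have in fact written out more detail than the paper itself provides.
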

\begin{proof}
	Follows from \cite[Lemma~6.1]{BVPS} for \( L^\tau(\Omega) \) norms, applied here with \( \tau = 2 \).
\end{proof}

\subsection{Discrete loss under control}
We define a computable surrogate \( \L^{*} \) for the theoretical loss \( \L_T \), based on the available data \( (f, g, u_0) \). Fix parameters \( k, k', r, r' \), and let \( \X = G^{r,r'}_{k,k'} \subset \Omega_T \), \( \Y = \overline{G}^{r,r'}_{\bar k,k'} \subset \Sigma \), and \( \Z = G_{k,r} \subset \Omega \times \{0\} \) denote the data sites. Let \( \tilde{m}\times \hat{m} = \#(\X) \), \( \bar{m} \times \hat{m} = \#(\Y) \), and \( \tilde{m} = \#(\Z) \).
\begin{remark}
	Let \( \|\cdot\|^*_{L^2(0,T;L^\gamma(\Omega))} \) be the discrete norm from \eqref{6.3}, where \( \gamma = 2d/(d+2)\) for \(d\geq3\) and \(\gamma = 	(1 + [\log(m)]^{-1})\) for \(d=2\).
	Also \( \| \cdot \|^{*}_{H^{1/2,1/4}(\Sigma)} \) as in \eqref{6.11}, \eqref{6.18}, and \eqref{6.20}. For continuous \( v, \Delta v\) and \(v_t \) on \( \overline{\Omega}_T \), define the discrete loss
	\begin{align}\label{7.1}
		\L^{*}(v) :=
		\begin{cases}
			\|f + \Delta v - v_t\|^{*}_{L^2(0,T;L^{\gamma}(\Omega))} + \|g - v\|^{*}_{H^{1/2,1/4}(\Sigma)}, & d \geq 3, \\
			[1 + \log(\tilde m)] \|f + \Delta v - v_t\|^{*}_{L^2(0,T;L^{\gamma}(\Omega))} + \|g - v\|^{*}_{H^{\frac{1}{2},\frac{1}{4}}(\Sigma)},\hspace{-2mm} & d = 2.
		\end{cases}
	\end{align}
\end{remark}

The assumptions on $\{\F, \G, \M\}$ imply the solution \( u \) of \eqref{1.1} belongs to a model class \( \mathcal{U} \) with recovery rate $\max \left\{ \tilde m^{-s/d} \hat{m}^{-\theta}, \, \bar m^{-(\bar{s} - 1)/(d - 1)} \hat{m}^{-\bar{\theta}}, \, \tilde m^{-\check{s}/d} \right\}.$ We define $	\|v\|_{\mathcal{U}} := \max \left\{ \|\Delta v\|_{\B}, \, \|\operatorname{Tr}(v)\|_{\operatorname{Tr}(\B)}, \, \|u_0\|_{\B_0} \right\}$, for any function \( v \). The next theorem bounds \( \|u - v\|_{H^1(\Omega)} \) in terms of the discrete loss \( \L^{*}(v) \), under the assumption \( \Delta v \in \B,\, v \in \overline\B \), and \( u_0 \in \B_0 \).
\begin{theorem}
	Let \( u \) solve \eqref{1.1} with \( f \in \mathcal{F} = \mathcal{U}(\B) \), \( g \in \mathcal{G} = \operatorname{Tr}(\mathcal{U}(\B)) \), and \( u_0 \in \mathcal{M} = \mathcal{U}(\B_0) \), as in \eqref{3.24}. Given data at the sites \( G^{r,r'}_{k,k'} \), \( \bar{G}^{r,r'}_{\bar k,k'} \), and \( G_{k,r} \) with cardinalities \( \tilde{m} \times \hat{m} \), \( \bar{m} \times \hat{m} \), and \( \tilde{m} \) respectively, the discrete loss functional \( \L^* \) from \eqref{7.1} satisfies
	\begin{align}\label{7.3}
		\|u - v\|_V \lesssim \L^*(v) + \left(1 + \|v\|_{\mathcal{U}}\right)\mathcal{R}_{\mathcal{U}}(\tilde{m}, \bar{m}),
	\end{align}
	for any continuous \( v \in V \), where the constants are independent of \( u, v, \tilde{m}, \bar{m} \), and
	\begin{align}\label{7.4}
		\mathcal{R}_{\mathcal{U}}(\tilde{m}, \bar{m}) :=
		\begin{cases}
			\max \left\{ \tilde{m}^{-s/d} \hat{m}^{-\theta},\, \bar{m}^{-(\bar{s} - 1)/(d - 1)} \hat{m}^{-\bar{\theta}},\, \tilde{m}^{-\check{s}/d} \right\}, & d \geq 3, \\
			\max \left\{ (\log \tilde{m}) \tilde{m}^{-s/2} \hat{m}^{-\theta},\, \bar{m}^{-(\bar{s} - 1)} \hat{m}^{-\bar{\theta}},\, \tilde{m}^{-\check{s}/2} \right\}, & d = 2.
		\end{cases}
	\end{align}
\end{theorem}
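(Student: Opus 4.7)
The plan is to reduce the claim to the continuous stability estimate \eqref{1.3} applied to the residual $w := u - v$, and then replace each continuous norm appearing in the resulting loss by its discrete counterpart using the results of Section~\ref{Sec_5}. Since $w$ satisfies $w_t - \Delta w = f - (v_t - \Delta v)$ in $\Omega_T$, $w = g - v$ on $\Sigma$, and $w(\cdot,0) = u_0 - v(\cdot,0)$ in $\Omega$, the a priori bound \eqref{1.3} yields
\begin{align*}
	\|u - v\|_V \;\lesssim\; \|f + \Delta v - v_t\|_{L^2(0,T;H^{-1}(\Omega))} + \|g - v\|_{H^{1/2,1/4}(\Sigma)} + \|u_0 - v(\cdot,0)\|_{L^2(\Omega)}.
\end{align*}
This is exactly $\L_T(v)$, so it remains only to pass from the continuous loss to $\L^*$.

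For the interior term I would invoke Lemma~\ref{lemma_12.4} to bound the $L^2(0,T;H^{-1}(\Omega))$ norm by $\|\cdot\|_{L^2(0,T;L^\gamma(\Omega))}$, picking up the factor $1 + \log(\tilde m)$ only in the case $d = 2$ (by choosing $\tau = 1 + [\log\tilde m]^{-1}$ in the second part of that lemma). Then Lemma~\ref{lemma_6.1} with $\tau = \gamma$ replaces the continuous $L^2(L^\gamma)$ norm by the discrete one, at the cost of $\|f + \Delta v - v_t\|_{\B}\,\tilde m^{-\alpha_\gamma}\hat m^{-\beta_2}$. Since $\F = U(B^\theta_{2,\infty}(0,T;B^s_{p,\infty}(\Omega)))$ with $p \geq \gamma$ and $p' = 2$, one checks that $\alpha_\gamma = s/d$ and $\beta_2 = \theta$, matching the first term of $\mathcal{R}_\U$. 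The Besov seminorm of the residual is controlled by $\|f\|_\B + \|v_t - \Delta v\|_\B \leq 1 + \|v\|_\U$ via the triangle inequality and the definition of $\|v\|_\U$.

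The boundary term is handled by Theorem~\ref{thm_6.3}, giving
\begin{align*}
	\|g - v\|_{H^{1/2,1/4}(\Sigma)} \lesssim \|g - v\|^*_{H^{1/2,1/4}(\Sigma)} + \|g - v\|_{\operatorname{Tr}(\overline{\B})}\,\bar m^{-\bar\alpha}\hat m^{-\beta_H},
\end{align*}
with $\bar\alpha = (\bar s - 1)/(d-1)$, $\beta_H = \bar\theta$, and $\|g - v\|_{\operatorname{Tr}(\overline\B)} \leq 1 + \|v\|_\U$. For the initial condition, Lemma~\ref{lemma_u00} produces $\|u_0 - v(\cdot,0)\|_{L^2(\Omega)} \lesssim \|u_0 - v(\cdot,0)\|^*_{L^2(\Omega)} + \|u_0 - v(\cdot,0)\|_{\B_0}\,\tilde m^{-\check s/d}$. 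Summing the three estimates, absorbing the data-site sums into $\L^*(v)$, and taking the max of the three remainder rates yields $\mathcal{R}_\U(\tilde m, \bar m)$ as in \eqref{7.4}.

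The main obstacle I anticipate is the $d=2$ case: here the embedding constant from Lemma~\ref{lemma_12.4} blows up as $\tau \downarrow 1$, so one must tune $\tau = 1 + [\log\tilde m]^{-1}$ carefully so that the extra factor is absorbed into the $1 + \log(\tilde m)$ weight multiplying the interior discrete norm in \eqref{7.1}, while simultaneously checking that $\alpha_\gamma \to s/2$ (and the corresponding discretisation rate in Lemma~\ref{lemma_6.1}) degrades by only this logarithmic factor, matching the $(\log\tilde m)\tilde m^{-s/2}\hat m^{-\theta}$ term of $\mathcal{R}_\U$. A secondary technical point is verifying that the definition of $\|v\|_\U$ indeed controls the Besov norms of the interior residual, of the trace residual, and of the initial residual simultaneously, so that the single factor $1 + \|v\|_\U$ suffices on the right-hand side of \eqref{7.3}.
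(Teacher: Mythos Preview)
Your proposal is correct and follows essentially the same route as the paper: apply the stability estimate \eqref{1.3} to $u-v$, then replace each continuous norm by its discrete analogue via Lemma~\ref{lemma_6.1} (after the embedding of Lemma~\ref{lemma_12.4}), Theorem~\ref{thm_6.3}, and Lemma~\ref{lemma_u00}, and bound the residual Besov norms by $1+\|v\|_{\U}$. You are in fact more explicit than the paper about the $d=2$ mechanism (tuning $\tau=1+[\log\tilde m]^{-1}$ in Lemma~\ref{lemma_12.4}), which the paper simply defers to \cite{BVPS}.
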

\begin{proof}
	We prove the case \( d \geq 3 \). By \eqref{1.3},
	\begin{align*}
		\|u - v\|_V &\lesssim \|f + \Delta v - v_t\|_{L^2(0,T;H^{-1}(\Omega))} + \|g - \operatorname{Tr}(v)\|_{H^{1/2,1/4}(\Sigma)} + \|u_0 - v\|_{L^2(\Omega)} \\
		&\lesssim \L^*(v) + \|f + \Delta v - v_t\|_{\B} \tilde{m}^{-s/d} \hat{m}^{-\theta} + \|g - \operatorname{Tr}(v)\|_{\operatorname{Tr}(\B)} \bar{m}^{-(\bar{s} - 1)/(d - 1)} \hat{m}^{-\bar{\theta}} \\
		&\quad + \|u_0 - v\|_{\B_0} \tilde{m}^{-\check{s}/d}.
	\end{align*}
	The first inequality uses the weak formulation of the PDE and the second applies discrete to continuous norm estimates (see Lemma~\ref{sec_6.1}, Theorem~\ref{thm_6.3}, and Lemma~\ref{lemma_u00}). The result follows by bounding model norms assuming \( \|f\|_\B, \|g\|_{\operatorname{Tr}(\B)}, \|u_0\|_{\B_0} \leq 1 \). Similarly, the $d=2$ case will be directly followed by an integration in time from \cite[Theorem~7.2]{BVPS}.
	
\end{proof}

\section{Numerical results}\label{Sec_6}
In this section, we present numerical computation to verify the theoretical results developed in sections~\ref{sec_2} and \ref{sec_3}. These examples are designed to validate the consistent loss framework, the role of data-driven discretization, and improvements in the error behaviour of the solution as predicted by the analysis. The architecture and training setup used in our experiments follow the design given in \cite{BVPS}, which employs deep residual networks with $\text{ReLU}^3$ activation functions, a $\tanh$-activated input layer, and scaled weight initialization. We adopt the same configuration, including step size, momentum-based gradient descent, and rescaled velocities for computation for the space-time domain. For completeness, we refer to \cite{BVPS} for full implementation details of architecture for solving squared original loss function $\L_{sq}(v)$ and squared consistent loss function $\L^*_{sq}(v)$ corresponding to \eqref{1.1}.
\subsection{Experiments:}
In this experiment, we consider the problem \eqref{1.1} for two manufactured solutions $u_1$ and $u_2$. The source function \(f\), boundary data \(g\), and initial condition \(u_0\) are chosen so that:
\begin{align*}
	u_1(x,y,t) = xy(1-x)(1-y)e^{-t}, \qquad u_2(x,y,t) = \sin(\pi x)\cos(\pi y) + e^{-t},
\end{align*}
are the exact solutions. We solve this problem using a neural network with width \(W = 20\), depth \(L = 4\) (for $u_1$) and \(W = 100\) and \(L = 8\) (for $u_2$). 
The problem is more challenging for $u_2$ since \(f\), \(g\), and \(u_0\) are all nonzero.
\begin{table}[H]
	\centering
	\renewcommand{\arraystretch}{1.1}
	\begin{tabular}{|c|c c|c c|c|}
		\hline
		Mesh &\multicolumn{2}{|c|}{\textbf{Example 1}} &  \multicolumn{2}{c|}{\textbf{Example 2}} \\
		\cline{2-5}
		size & Rel err $(\%)$& Loss $(10^{-5})$ & Rel err $(\%)$ & Loss $(10^{-4})$ \\
		$N$ & PINNs / CPINNs & $\L_{sq}$ / $\L_{sq}^*$ & PINNs / CPINNs & $\L_{sq}$ / $\L_{sq}^*$ \\
		\hline
		$5$ & 21.77 / 4.75 & $4.0$ / $0.6$ &  3.01 / 1.06 & $1.71$ / $0.44$ \\
		$10$ & 14.52 / 3.03 & $1.5$ / $0.6$ &  2.72 / 0.44 & $1.84$ / $0.57$ \\
		$15$ & 14.96 / 3.09 & $1.5$ / $0.7$ & 2.36 / 0.41 & $1.43$ / $0.64$ \\
		$20$ & 16.06 / 3.01 & $1.7$ / $0.6$ &  2.26 / 0.38 & $1.33$ / $0.61$ \\
		$25$ & 14.99 / 2.54 & $1.4$ / $0.5$ &  2.16 / 0.36 & $1.20$ / $0.55$ \\
		$30$ & 14.18 / 2.23 & $1.2$ / $0.4$ & 2.27 / 0.35 & $1.31$ / $0.58$ \\
		\hline
	\end{tabular}
	\caption{Results demonstrating that the CPINNs loss function $\mathcal{L}^*_{sq}$ yields errors approximately 3 to 6 times smaller than those obtained by the original PINNs loss $\mathcal{L}_{sq}$.}
	\label{table_1}
\end{table}
Table~\ref{table_1} summarizes the results obtained for different mesh sizes of the form $(N\times N)$ corresponding to various choices of collocation points. 
This improvement is not due to better training dynamics, but rather to the superior error control provided by \(\mathcal{L}^{*}_{sq}\). While tuning the weights in \(\mathcal{L}_{sq}\) (e.g., varying \(\lambda\) in \eqref{1.5}) might improve its performance, our approach avoids such tuning by design, as all terms in \(\mathcal{L}^{*}_{sq}\) are naturally balanced.
\begin{figure}[H]
		\subfloat{
			{\includegraphics[width=0.3\textwidth]{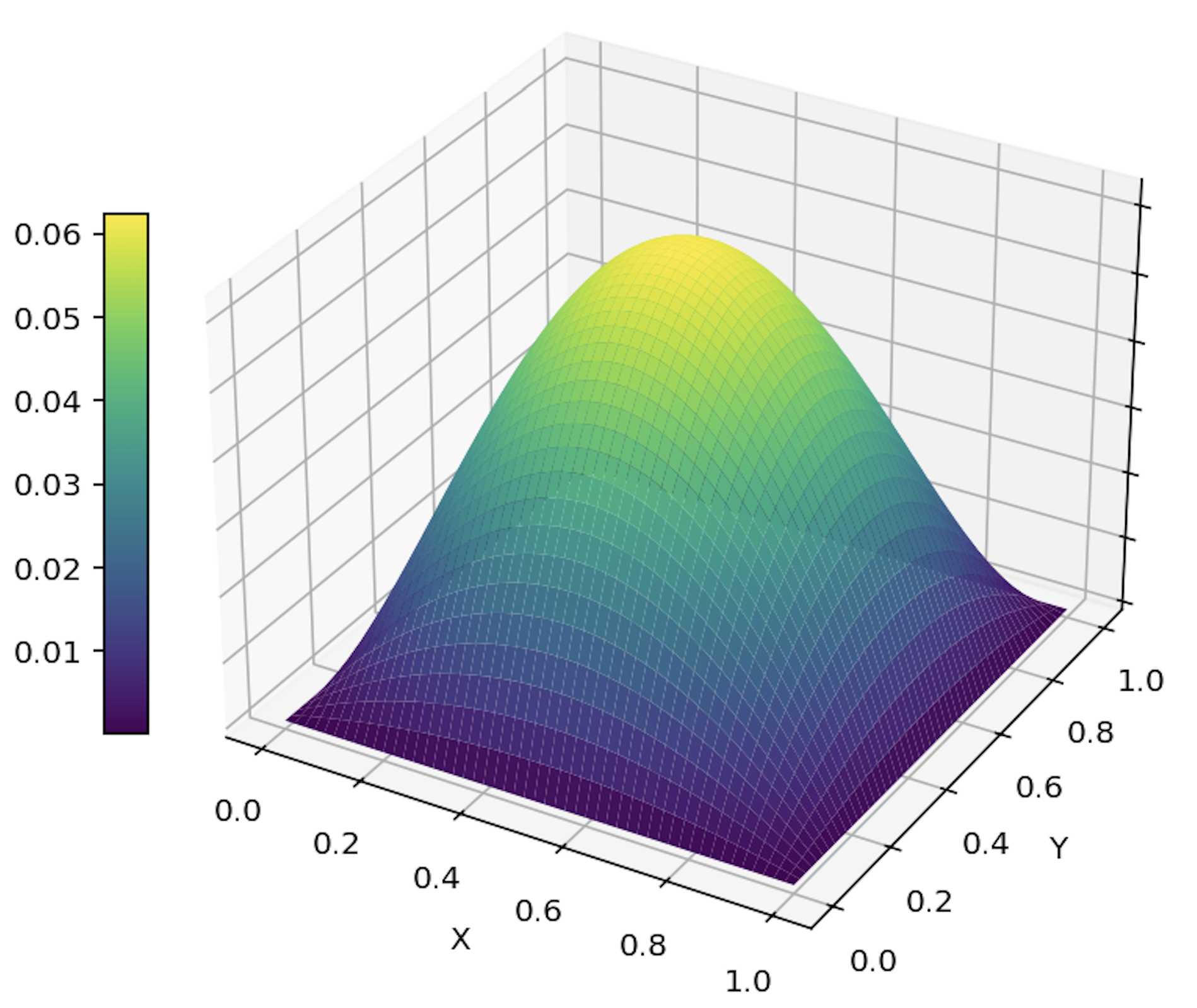}} 
			{\includegraphics[width=0.3\textwidth]{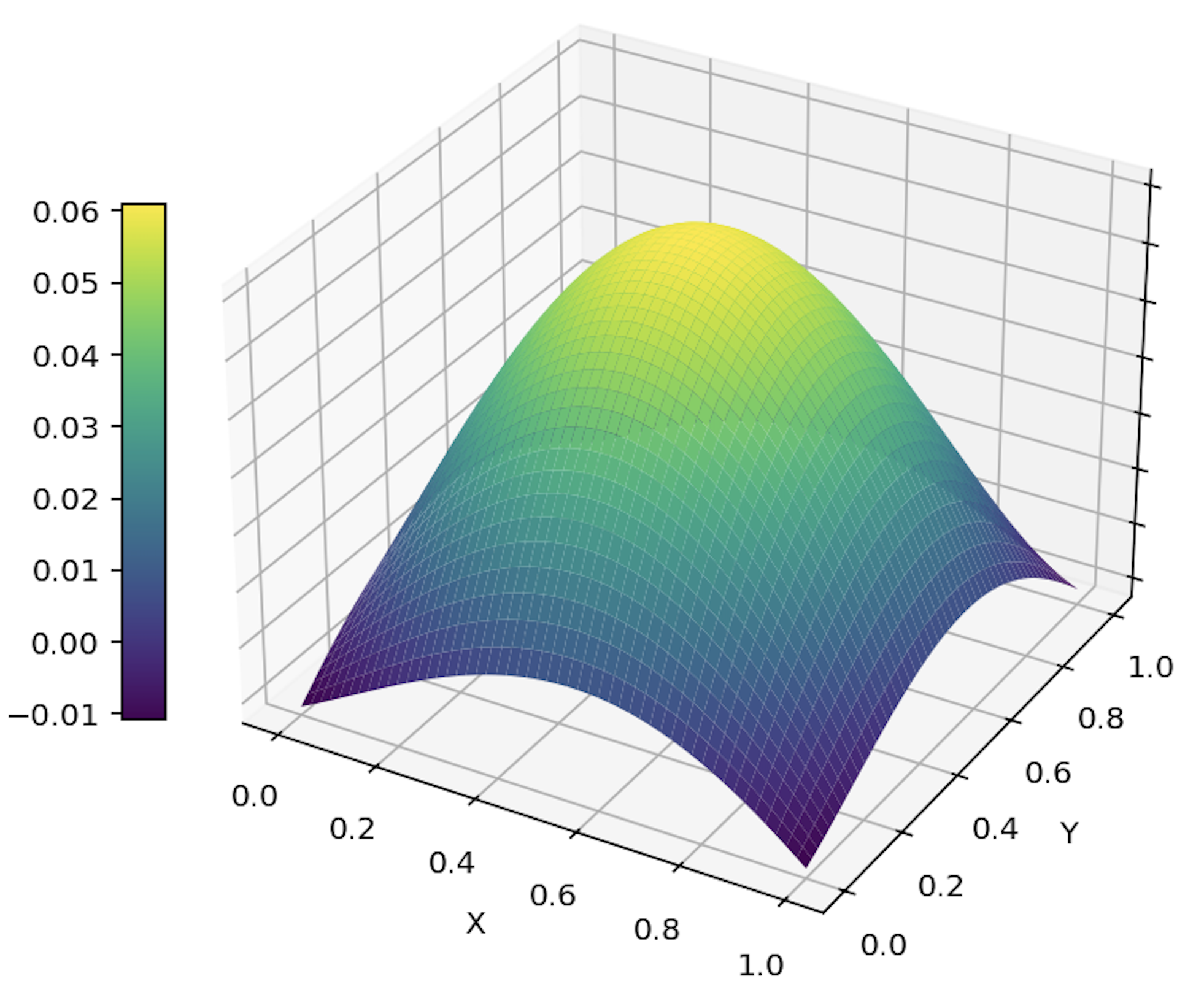}}
			{\includegraphics[width=0.3\textwidth]{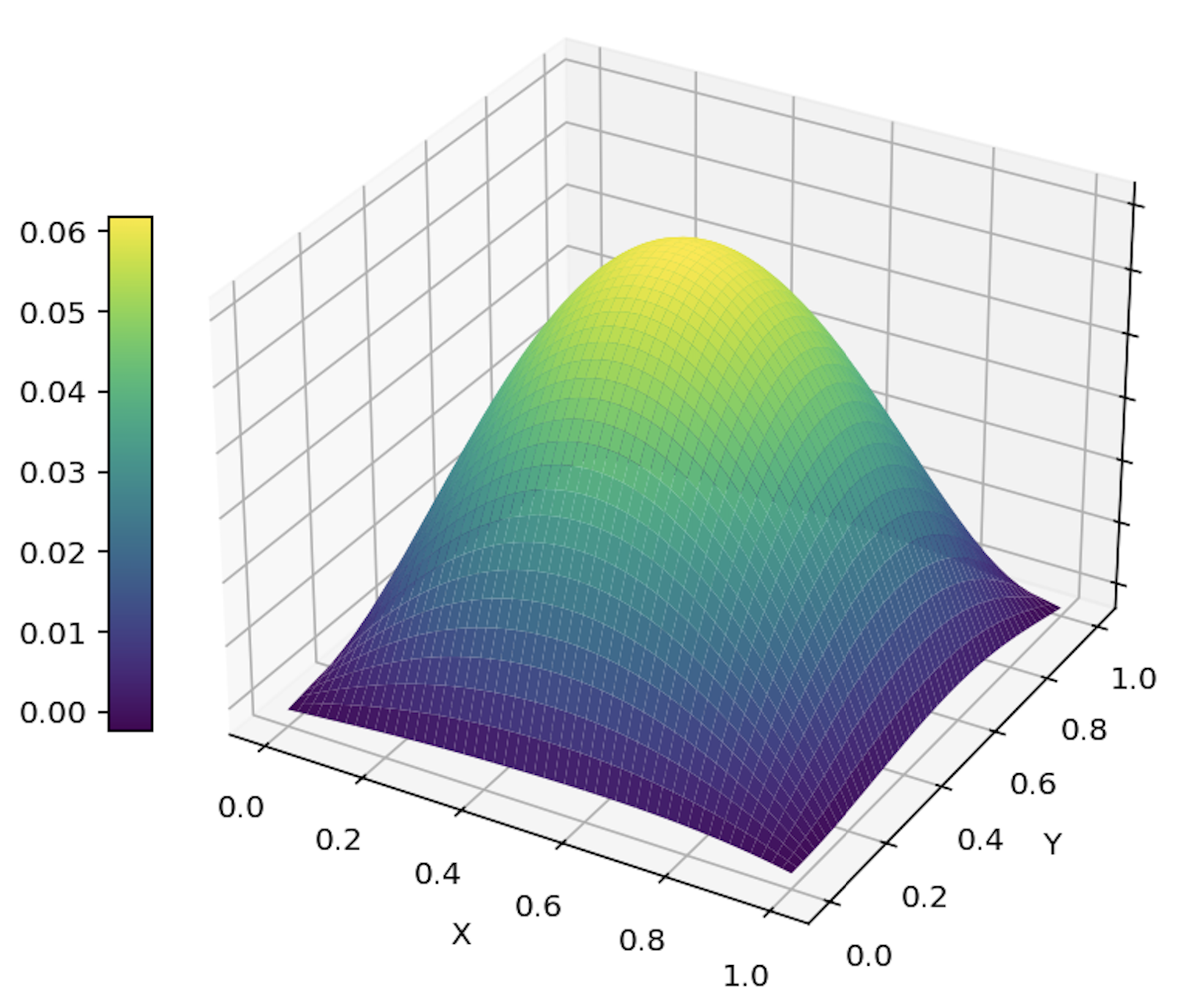}}
		}\\
		\subfloat{
			{\includegraphics[width=0.3\textwidth]{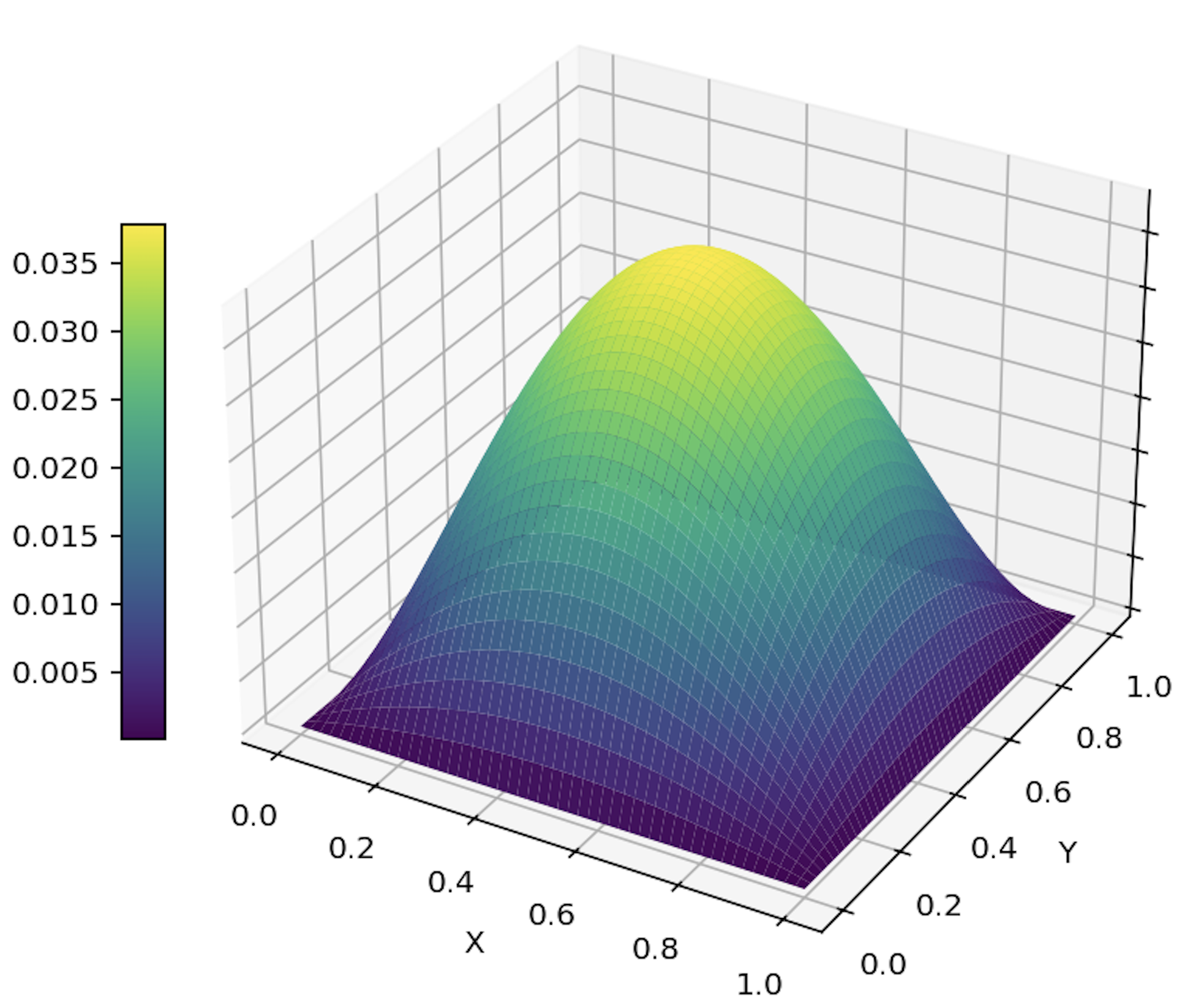}} 
			{\includegraphics[width=0.3\textwidth]{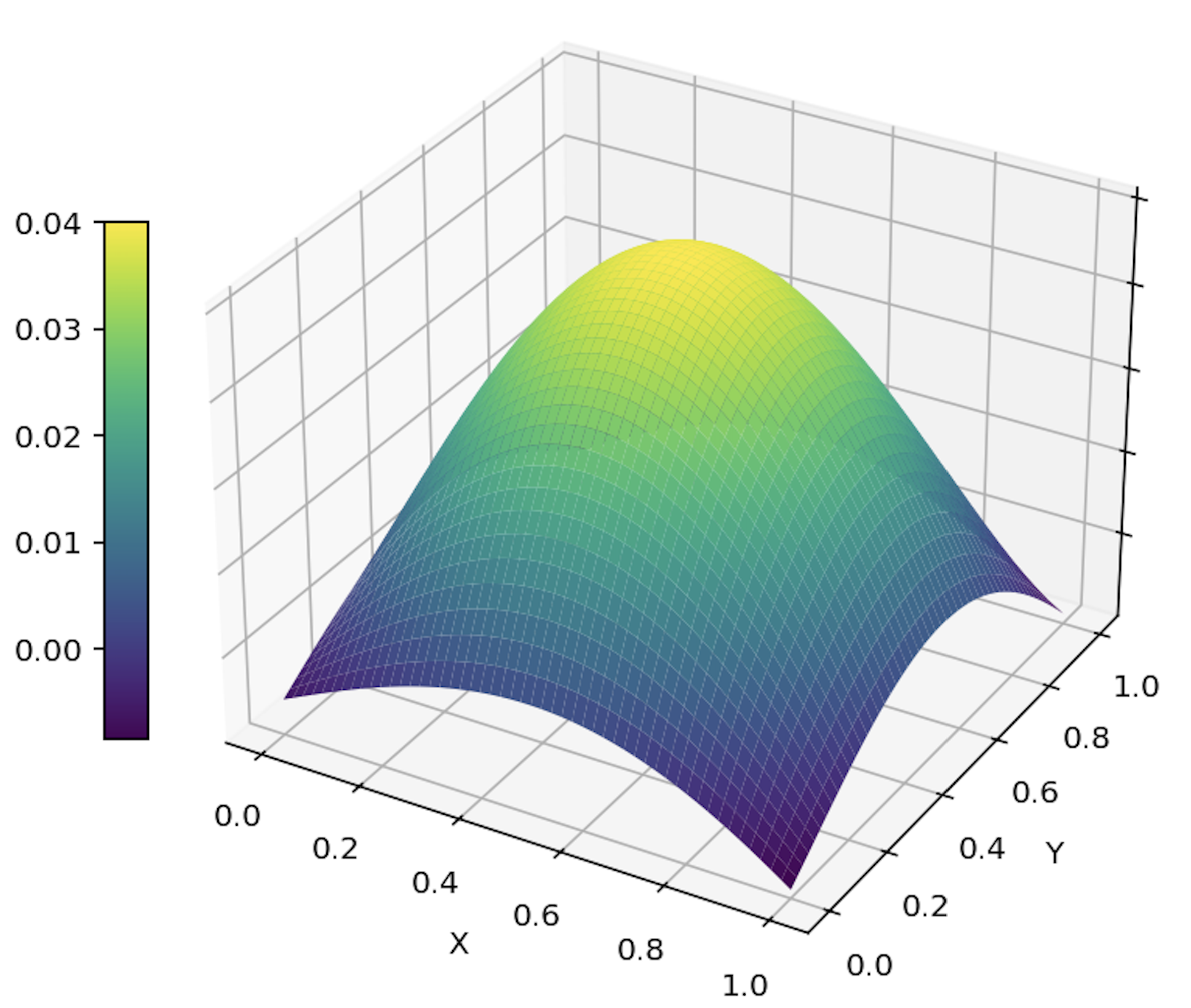}}
			{\includegraphics[width=0.3\textwidth]{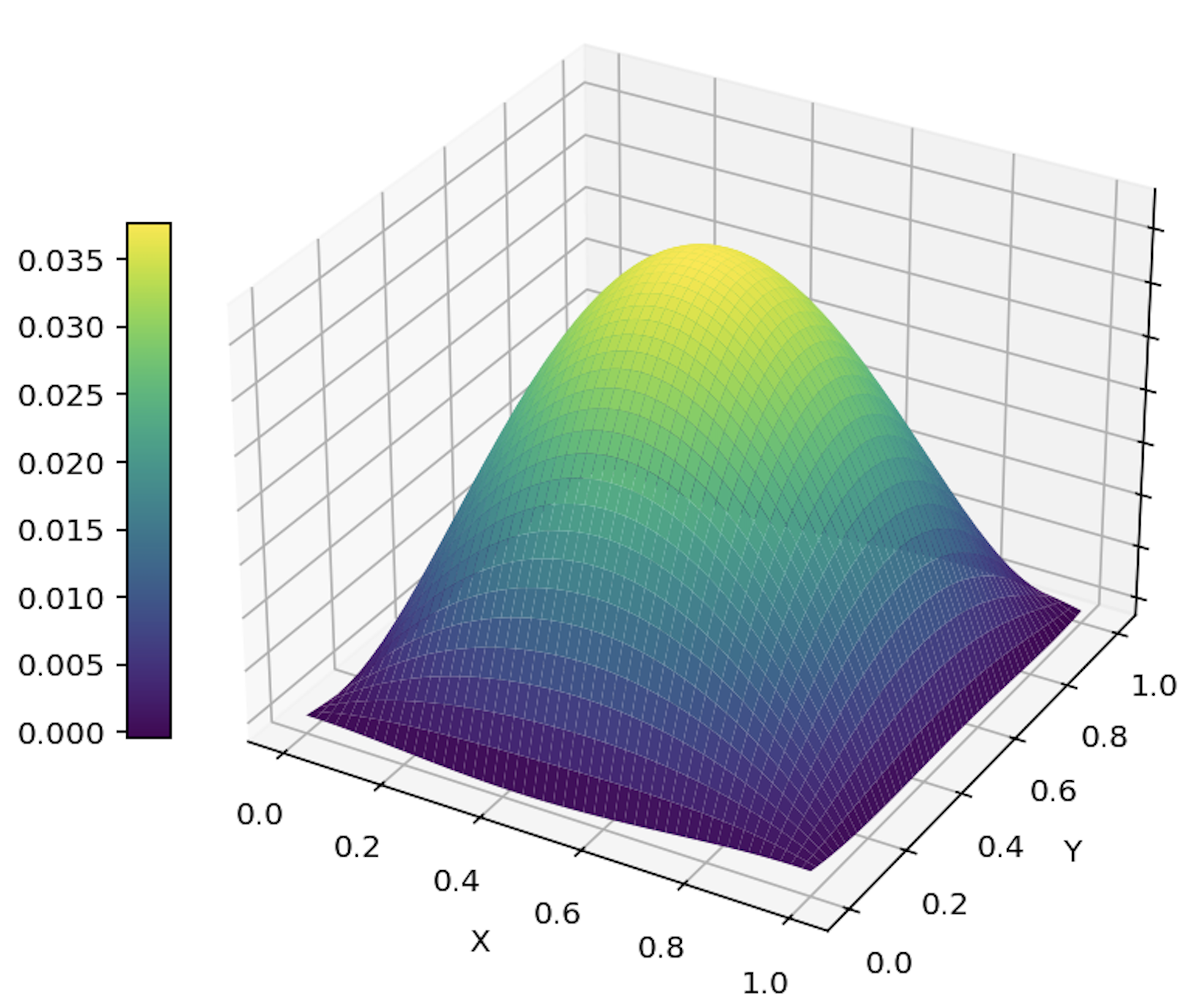}}
		}\\
		\subfloat{
			{\includegraphics[width=0.3\textwidth]{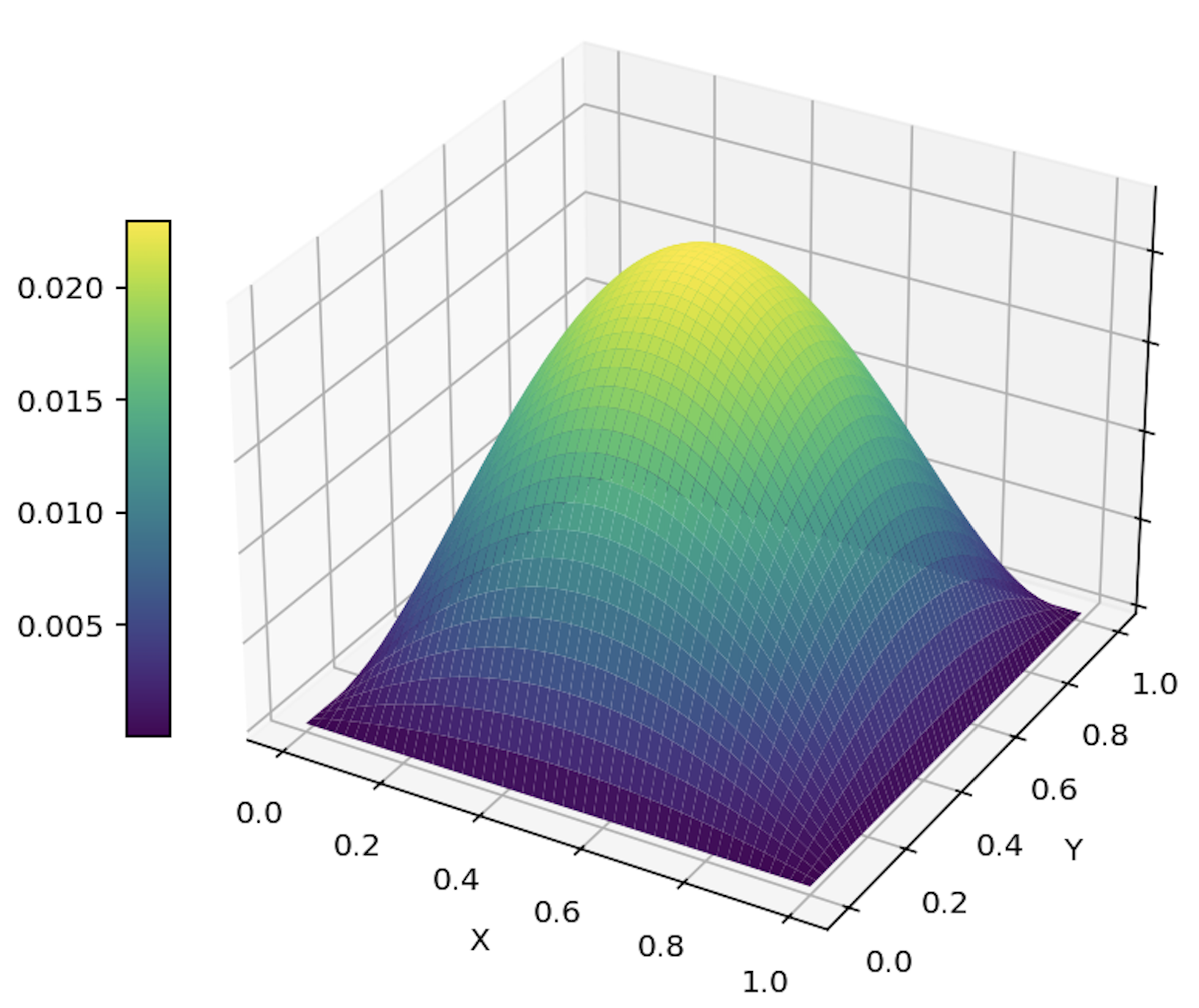}} 
			{\includegraphics[width=0.3\textwidth]{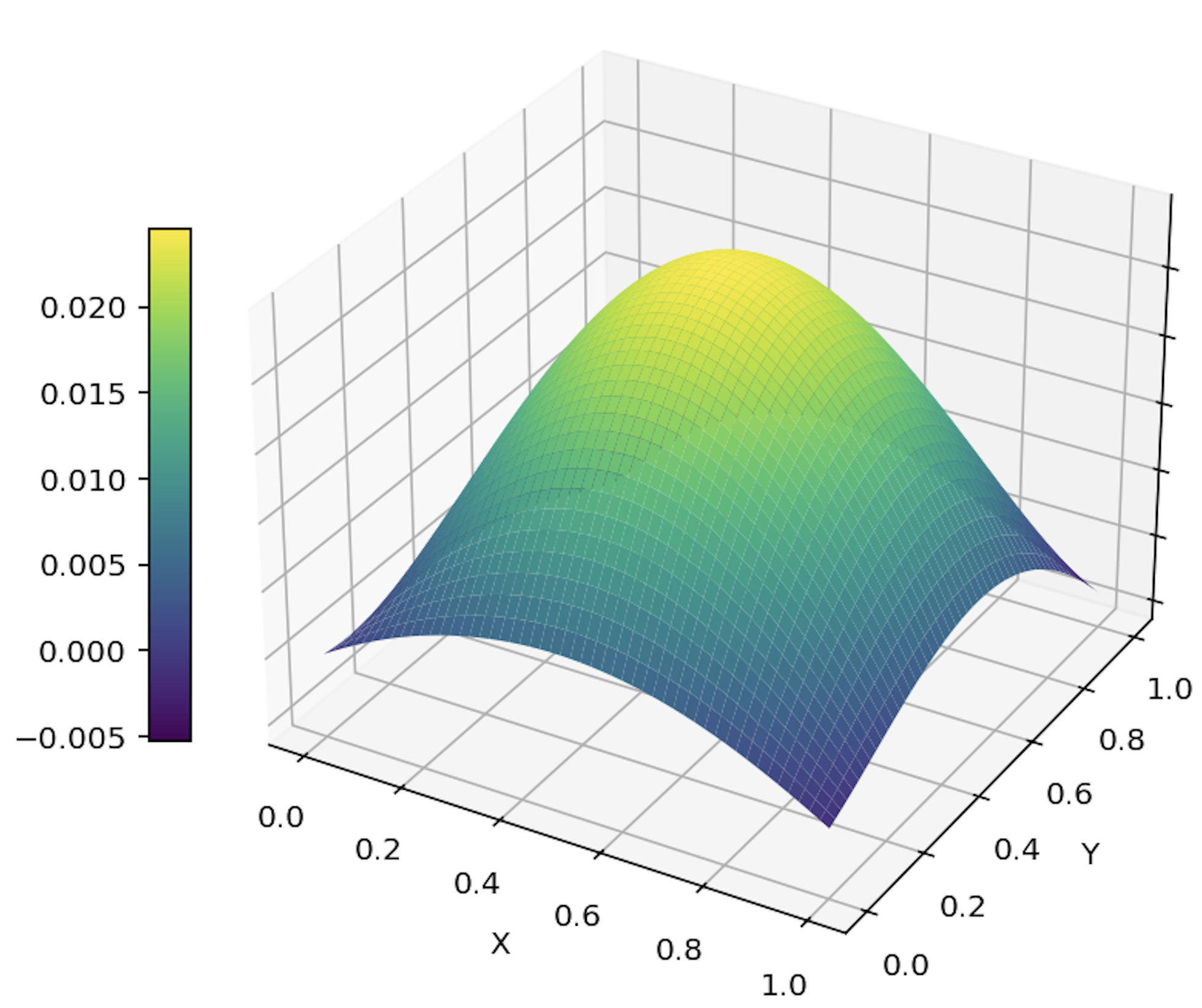}}
			{\includegraphics[width=0.3\textwidth]{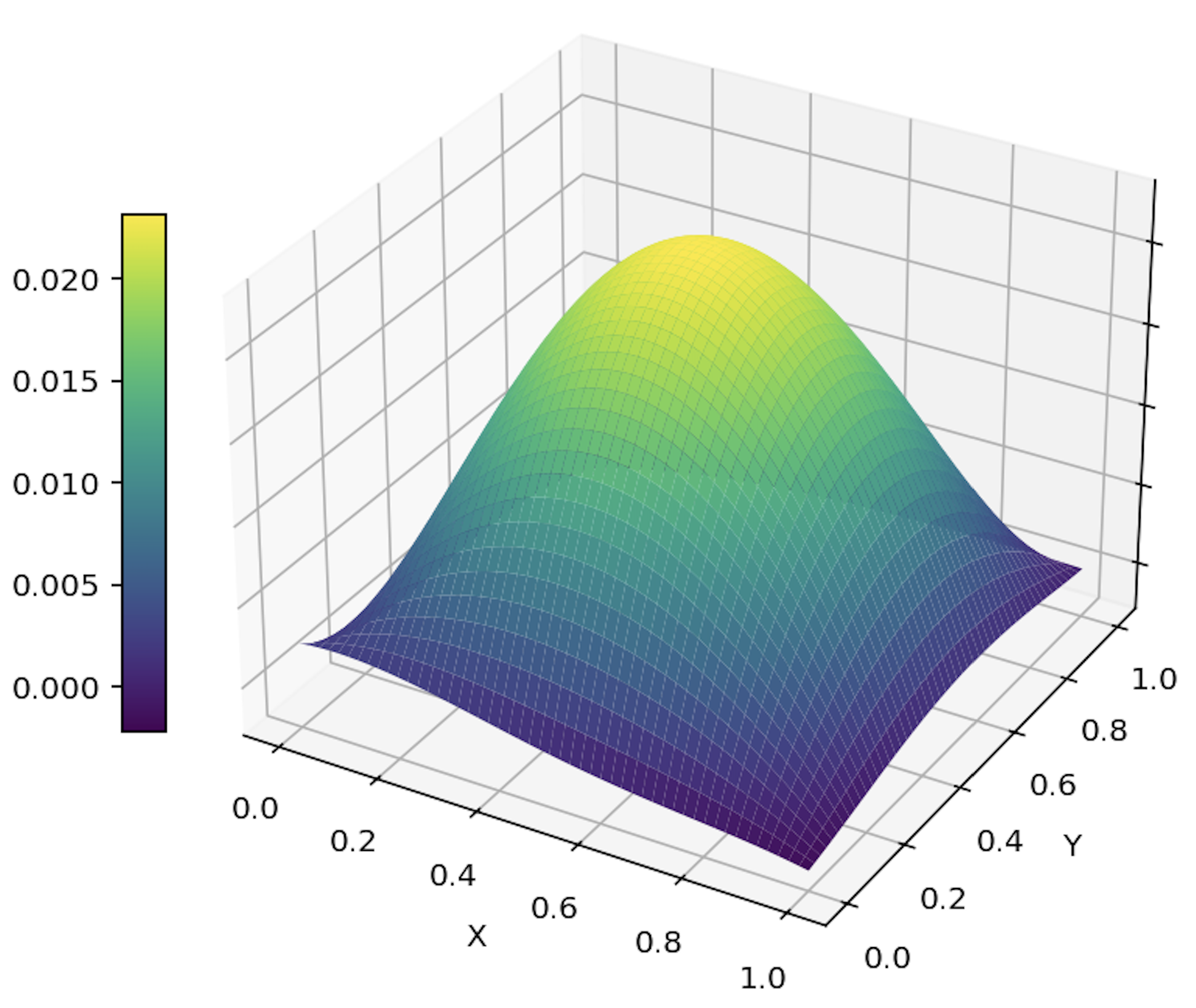}}
		}\\
		\caption{Plots of the exact solution $u_1$ (left), the learned solution using loss function $\L_{sq}$ (middle) and using loss function $\L_{sq}^{*}$ (right), for $15 \times 15$ collocation points at $t=0$ (top row), $t=0.5$ (middle row), and $t=1$ (top row).}
\end{figure}
\section{Conclusion}\label{Sec_7}
In this paper, we analyzed approximation properties of consistent PINNs for parabolic PDEs using non-linear approximation and optimal recovery. We established error estimates for piecewise polynomial approximations in mixed-norm Besov space \( B^{\theta}_{p'} (0,T; B^s_p(\Omega))\). These rates show the regularity of the target function and the resolution of the partition, which express how approximation accuracy scales with both spatial and temporal refinement. Develop near-best approximation property of local polynomial projectors, which enables robust error control. Our results not only justify the convergence of PINNs under mild assumptions but also bridge connections to classical approximation theory. Future work will extend these techniques to adaptive mesh refinements and nonlinear model classes.

\appendix

\section{Polynomial based local approximation}
Numerous key results exist for approximating functions in Besov spaces. In this work, we employ piecewise polynomial approximations, beginning with local polynomial approximation. For integers \( r, r' \geq 1 \), let \( \mathcal{P}_{r,r'} \) denote the space of polynomials of total degree less than \( r \) in the spatial variables and less than \( r' \) in the temporal variable.
\begin{align*}
	\mathcal{P}_{r,r'}:=\left\{\sum_{k'<r'}^{}\sum_{|k|_1<r}^{}a_{k,k'}x^kt^{k'},\ a_{k,k'}\in \mathbb{R}\right\},
\end{align*}
$\text{where } x^k := x_1^{k_1} \cdots x_d^{k_d},  k = (k_1, \dots, k_d),\, k_j \geq 0, \text{and }|k|_1 := \sum_{j=1}^{d} k_j.$
Note that $\omega_r(P,l\times l')_{L^{p'}(0,T;L^p(\Omega))}=0,\ l,l'\geq0,$ for all $P\in \mathcal{P}_{r,r'}$. If $I$ is any cube in $\mathbb{R}^d$, $I'$ be any interval in $[0,T]$ and $f\in L^{p'}(I';L^p(I)), 1<p\leq \infty$, we denote by
\begin{align}\label{12.1}
	\mathcal{E}_{r,r'}(f, I\times I')_{p.p'} := \inf_{P\in \mathcal{P}_{r,r'}}\|f-P\|_{L^{p'}(I';L^p(I))}.
\end{align} 
The quantity \( \mathcal{E}_{r,r'}(f, I \times I')_{p,p'} \) denotes the best approximation error of \( f \) on \( I \times I' \) in the mixed norm \( \| \cdot \|_{L^{p'}(I'; L^p(I))} \) by polynomials in \( \mathcal{P}_{r,r'} \). Whitney’s theorem states this error is equivalent to the mixed modulus of smoothness of \( f \), up to constants, for domains \( I \times I' \) with side lengths \( \ell_I, \ell_{I'} \).
\begin{align}\label{12.2}
	c \E_{r,r'} (f, I\times I')_{p,p'} \leq \omega_{r,r'}(f, \ell_I\times \ell_{I'})_{L^{p'}(I';L^p(I))} \leq C \E_{r,r'} (f, I\times I')_{p,p'},
\end{align}
for the constants $c,C$ rely solely on $r,d$ and $p,p'$, provided that $p,p'$ is near $0$.  The lower inequality in \eqref{12.2} is typically the only one to which Whitney's theorem applies. The upper inequality, however, arises trivially because for any polynomial $P \in P_{r,r'}$,
\begin{align*}
	\omega_{r,r'}(f, \ell_I\times \ell_{I'})_{L^{p'}(I';L^p(I))} \hspace{-0.7mm}=\hspace{-0.2mm} \omega_{r,r'}(f - \hspace{-0.4mm} P, \ell_I\times \ell_{I'})_{L^{p'}(I';L^p(I))}\hspace{-0.5mm}\leq \hspace{-0.3mm}C \|f - \hspace{-0.4mm}P\|_{L^{p'}(I';L^p(I))}.
\end{align*}
The following modified form of Whitney's theorem is helpful. For $0 <p<\infty$, $f\in \Lp,$ and $I\times I' \subset \mathbb{R}^d\times[0,T]$, we define
\begin{align*}\label{12.3}
	\tilde w_{r,r'}(f, b\times b')^{p'}_{\Lp} \hspace{-1mm}:= \frac{b^{-d} }{\delta b'}\int\limits_{h' \in [0,b']}\int\limits_{I_{r'h'}}\left[\int\limits_{h \in [-b, b ]^d} \int\limits_{I_{rh}} |\Delta^r_h\Delta^{r'}_{h'}f|^p \, dx \, dh \right]^{\frac{p'}{p}} dt  \,  dh',
\end{align*}
where, \( \delta b' \) denotes the length of the interval \( [0, b'] \), while \( I_{r'h'} := \{ t : [t, t + r' h'] \subset I' \} \) and \( I_{rh} := \{ x : [x, x + r h] \subset I \} \). The quantity \( \tilde{w}_{r,r'} \) is referred to as the averaged modulus of smoothness of \( f \), and it is equivalent to \( \omega_{r,r'} \).
\begin{align*}\label{12.4}
	c \tilde{w}_{r,r'} (f, b\times b')_{\Lp} \leq \omega_{r,r'}(f, b\times b')_{\Lp} \leq C \tilde{w}_{r,r'} (f, b\times b')_{\Lp}, 
\end{align*}
for every $0 < b,b' \leq 1$, where again the constants $c,C$ depend only on $r,r'$ and p and can be taken absolute when $r,r'$ is fixed and $0 <p_0 \leq p\leq \infty$ with $p_0$ fixed. Thus, Whitney’s theorem holds with $\omega_{r,r'}$ replaced by $\tilde w_{r,r'}$
\begin{align}\label{12.5}
	c \E_{r,r'} (f, I \times I')_{p,p'} \leq \tilde w_{r,r'} (f, \ell_I)_{\Lp} \leq C \E_{r,r'} (f, I\times I')_{p,p'}.
\end{align}
We use the averaged modulus \( \tilde{w}_{r,r'} \) as it satisfies subadditivity of \( \tilde{w}_{r,r'}^{p'} \), unlike \( \omega_{r,r'} \). For partitions \( \mathcal{I}, \mathcal{I'} \) of \( \Omega \) and \( [0,T] \), we will take advantage of this property in analysis.  
\begin{align}\label{12.6}
	\sum_{I' \in \mathcal{I'}}\sum_{I \in \mathcal{I}} \tilde w_{r,r'} (f, b\times b')^p_{\Lp} \leq \tilde w_{r,r'} (f, b\times b')^p_{L^p(0,T;L^p(\Omega))}, \quad b, b' > 0.
\end{align}
The subadditivity property also holds for partitions of \( \Omega_T \) into simplices \(\mathcal{T}\times \I'\). For \( I \times I' \subset \Omega_T \), a polynomial \( P_{I,I'} \in \mathcal{P}_{r,r'} \) is a near-best \( \Lp \) approximation to \( f \) with constant \( \lambda \geq 1 \) if
\begin{align}\label{12.7}
	\| f - P_I \|_{\Lp} \leq \lambda \E_{r,r'} (f, I\times I')_{r,r'}.
\end{align}
Lemma 3.2 in \cite{IBS} shows that if \( P_I \in \mathcal{P}_r \) is near-best in \( L^p(I) \) with constant \( \lambda \), then it remains near-best in \( L^{\bar{p}}(I) \) for all \( \bar{p} \geq p \). Extending this, if \( P_{I,I'} \in \mathcal{P}_{r,r'} \) is near-best in \( \Lp \) with constant \( \lambda \), it is also near-best in \( L^{\bar{p}'}(I';L^{\bar{p}}(I)) \).
\begin{align}\label{12.8}
	\| f - P_{I,I'} \|_{L^{\bar p'}(I';L^{\bar p}(I))} \leq C \lambda \E_{r,r'} (f, I\times I')_{p,p'},
\end{align}
with the constant $C$ depending only on $r,d,p,$ and $p'$. This constant does not depend on $I, I',\bar p$ or $\bar p'$. Any near-best approximation \( P_{I,I'} \) with constant \( \lambda \) is also near-best on any larger cuboid \( J \times J' \supset I \times I' \), satisfying
\begin{align}\label{12.9}
	\| f - P_{I,I'} \|_{L^{p'}(J';L^p(J))} \leq C \lambda \E_{r,r'} (f, J\times J')_{p,p'}.
\end{align}
where, \( C \) also depends on the ratio \( |J \times J'| / |I \times I'| \) (see Lemma 3.3 in \cite{IBS}). Although Lemmas 3.2 and 3.3 in \cite{IBS} are stated for polynomials of degree \( < r \), it holds for total degree \( < rr' \) ($r$ in space and $r'$ in time). In summary, near-best \( L^{p'}(I';L^p(I)) \) approximations remain near-best on larger cuboids \( J \times J' \supset I \times I' \) and for \( \bar{p} \geq p \). 

\subsection{Polynomial norms and inequalities}
We recall key norm equivalences (see (3.2) in \cite{BDS}, Lemmas 3.1 and 3.2 in \cite{RDVS}). For any cuboid \( I \times I' \subset \mathbb{R}^d \times [0,T] \) and \( g \in \Lp \), define the normalized norm
\begin{align}\label{12.10}
	\| g \|_{\Lp}^{\#} := |I|^{-1/p}|I'|^{-1/p'} \| g \|_{\Lp}. 
\end{align}
For any polynomial \( P \in \mathcal{P}_{r,r'} \), we have $	\| P \|_{L^{q'}(I';L^q(I))}^{\#} \asymp \| P \|_{\Lp}^{\#},$ with constants independent of \( I, I' \), provided \( 0<p_0\leq p,q,p',q' \leq \infty\). Moreover,
\begin{align}\label{12.11}
	|P|_{B_{p'q'}^{\theta} (I';B_{pq}^s (I))} \leq C \ell_I^{-s} \ell_{I'}^{-\theta}\| P \|_{\Lp},
\end{align} 
where \( C \) is independent of \( I, I' \) for \( P \in \mathcal{P}_{l,l'} \) and any \( s,\theta > 0 \) (see Corollary 5.2, \cite{IBS}).

\subsection{Besov spaces and piecewise polynomial approximation}
We recall that membership in \( B^{\theta}_p(0,T; B^s_p(\Omega)) \) implies approximation by piecewise polynomials at a certain rate. For \( k,k' \geq 0 \), let \( D_k \) and \( \mathcal{I}_{k'} \) be dyadic partitions of \( \Omega \) and \( [0,T] \), with mesh sizes \( 2^{-k} \) and \( 2^{-k'} \). Define \( \mathcal{S}_{k,k'}(r,r') \) as the space of piecewise polynomials of degree \( (r-1) \) in space and \( (r'-1) \) in time. On each cell \( I \times I' \), let \( P_{I,I'} \in \mathcal{P}_{r,r'} \) denote the best \( \Lp \) approximation to \( f \).
\begin{align}\label{12.13}
	S_{k,k'} := S_{k,k'}(f) := \sum_{I' \in \I_{k'}}\sum_{I \in \D_k} P_{I,I'} \chi_I \chi_{I'}\in S_{k,k'},
\end{align}
where \(\chi_I\) and \( \chi_{I'}\) are characteristic functions. 
The following lemma holds.

\begin{lemma}\label{L12.1}
	Let \(1 \leq p, p' \leq \infty\), \(s, \theta > 0\), and \(r, r' \geq 2\) with \(r > s\), \(r' > \theta\). If \(f \in \mathfrak{B} := B^\theta_{p'} (0,T; B^s_p(\Omega))\), then
	\begin{align}\label{12.14}
		\text{dist}(f,\S_{k,k'}(r,r'))_{L^{p'}(0,T;L^p(\Omega))}\leq C |f|_{\mathfrak{B}}2^{-(ks+k'\theta)}, \quad k,k'\geq0,
	\end{align}
	with constant \(C\) depending only on \(p, p', s\) and \(\theta\).
\end{lemma}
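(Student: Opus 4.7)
The natural candidate for the approximant is the piecewise best-local-polynomial $S_{k,k'}(f)$ defined in \eqref{12.13}. Since $S_{k,k'}(f)\in \mathcal{S}_{k,k'}(r,r')$, it suffices to bound $\|f-S_{k,k'}\|_{L^{p'}(0,T;L^p(\Omega))}$. On each product cell $I\times I'\in \D_k\times \I_{k'}$, the local polynomial $P_{I,I'}$ is by definition the best $L^{p'}(I';L^p(I))$ approximation to $f$ in $\mathcal{P}_{r,r'}$, so Whitney's theorem in its averaged form \eqref{12.5} gives
\begin{align*}
\|f-P_{I,I'}\|_{L^{p'}(I';L^p(I))}=\mathcal{E}_{r,r'}(f,I\times I')_{p,p'}\leq C\,\tilde w_{r,r'}(f,2^{-k}\times 2^{-k'})_{L^{p'}(I';L^p(I))},
\end{align*}
with $C$ depending only on $r,r',d,p,p'$. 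This is the local input to the global estimate.

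The assembly step is the main technical point. Writing $a_I(t):=\|(f-P_{I,I'})(\cdot,t)\|_{L^p(I)}^p$, one has
\begin{align*}
\|f-S_{k,k'}\|_{L^{p'}(0,T;L^p(\Omega))}^{p'}=\sum_{I'\in\I_{k'}}\int_{I'}\Big(\sum_{I\in\D_k}a_I(t)\Big)^{p'/p}dt.
\end{align*}
The plan is to convert the nested $\ell^p$-in-space / $L^{p'}$-in-time structure into a plain sum of cellwise mixed norms. If $p'\geq p$, Minkowski's integral inequality (with counting measure in $I$) pulls the spatial sum outside and then back inside the $p$-th power, yielding
$\int_{I'}(\sum_I a_I)^{p'/p}dt \le (\sum_I\|f-P_{I,I'}\|_{L^{p'}(I';L^p(I))}^p)^{p'/p}$; if $p'<p$, the elementary subadditivity $(\sum a_I)^{p'/p}\leq \sum a_I^{p'/p}$ gives the same type of bound directly. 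In both cases the global error is controlled by a sum over $I'\times I$ of Whitney bounds.

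Combining the cellwise Whitney estimate with the subadditivity property \eqref{12.6} of $\tilde w_{r,r'}^{p'}$ (applied over the product partition $\D_k\times\I_{k'}$), one obtains
\begin{align*}
\|f-S_{k,k'}\|_{L^{p'}(0,T;L^p(\Omega))}\leq C\,\tilde w_{r,r'}(f,2^{-k}\times 2^{-k'})_{L^{p'}(0,T;L^p(\Omega))}.
\end{align*}
The equivalence $\tilde w\asymp \omega$ and the Besov seminorm characterization displayed just after \eqref{12.5} then give
$\omega_{r,r'}(f,2^{-k}\times 2^{-k'})_{L^{p'}(0,T;L^p(\Omega))}\leq |f|_{\mathfrak{B}}\,2^{-(ks+k'\theta)}$, which, chained with the previous display, finishes the proof.

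The main obstacle is the mixed-norm bookkeeping in the assembly step: because the outer exponent $p'$ and inner exponent $p$ can lie in either order, a single inequality does not cover both regimes, and one must split into $p'\geq p$ (Minkowski's integral inequality) versus $p'<p$ (concavity/subadditivity of $x^{p'/p}$) to reduce everything to the cellwise mixed norm on which Whitney is directly applicable. Once this dichotomy is handled, the use of the averaged modulus $\tilde w$ is essential precisely because $\omega_{r,r'}$ lacks the $L^{p'}$-subadditivity needed to sum local bounds into a single global modulus without losing the optimal rate.
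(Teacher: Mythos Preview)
Your proof is correct and follows the same strategy as the paper: take $S_{k,k'}$ from \eqref{12.13}, apply Whitney on each cell, assemble, and invoke the subadditivity of the averaged modulus together with the Besov seminorm characterization. The only difference is in the assembly step: the paper first sums over the spatial cubes at each fixed $t$ (a direct $\ell^p$ identity giving a pointwise-in-$t$ bound on $\|f(t)-S_{k,k'}(t)\|_{L^p(\Omega)}$), then raises to the power $p'/p$ and integrates in time; this bypasses your case split on $p'\gtrless p$ altogether and avoids having to check that the subadditivity direction in \eqref{12.6} matches the branch you are in.
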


\begin{proof}
	Let \(S_{k,k'}\) be as in \eqref{12.13}. By using Whitney’s theorem and then for $p<\infty$ taking sum over $I \in \D_{k}$ we obtain
	\begin{align}\label{12.15}
		\|f(t) - P_{I,I'}(t)\|^p_{L^p(\Omega)} &\leq C^p\sum_{I \in D_k} \|\Delta^{r'}_{h'}\Delta^{r}_{h}f(t)\|^p_{L^p(I)} \leq C^P \|\Delta^{r'}_{h'}\Delta^{r}_{h}f(t)\|^p_{L^P(\Omega)}.
	\end{align}
	Now if $p'<\infty$, raise the power $p'/p$ both sides, and then integrate and sum it for $I'\in \I_{k'}$, we get summing over \(I \in D_k\) and integrating over \(I' \in \mathcal{I}_{k'}\), we obtain
	\begin{align}\label{12.16}
		\|f - &S_{k,k'}\|^{p'}_{L^{p'}(0,T;L^p(\Omega))} \leq C^{p'} \sum_{I' \in \I_{k'}}\int_{I'}^{}\|\Delta^{r'}_{h'}\Delta^{r}_{h}f(t)\|^{p'}_{L^p(\Omega)} \, dt \notag\\ &\leq C^{p'} \tilde w_{r,r'} (f, 2^{-k}\times 2^{-k'})^{p'}_{L^{p'}(0,T;L^p(\Omega)}\leq C^{p'} |f|^{p'}_{\mathfrak{B}} 2^{-(ks+k'\theta)p'},
	\end{align}
	where we used the subadditivity of the averaged modulus of smoothness and the Besov seminorm characterization. For case \(p = \infty\)  these inequalities follow directly from \eqref{12.15} and the fact that $\| \cdot \|_{\Lp} \leq \| \cdot \|_{L^\infty(0,T;L^\infty(\Omega))}$ for each \( I\times I' \subset \OMT \).
\end{proof}
\subsection{Local polynomial approximation in $L^{\tau'}(0,T;L^\tau(\Omega))$}
Here constants $C$ in this section may vary at each occurrence and dependent on $s, \theta, p$, and $p'$.
\begin{theorem}\label{thm_12.2}
	Let \( S_{k,k'} \) be as in \eqref{12.13}. If \( s > d/p,\ \theta > 1/p',\ 1\leq p \leq \tau \leq \infty,\ 1\leq p' \leq \tau' \leq \infty \), then for all \( f \in \mathfrak{B} :=\BV \),
	\begin{align}\label{12.17}
		\|f - S_{k,k'}\|_{L^{\tau'}(0,T;L^\tau(\Omega))} \leq C |f|_{\mathfrak{B}} 2^{-\left(k(s - \frac{d}{p} + \frac{d}{\tau})+ k'(\theta- \frac{1}{p'}+ \frac{1}{\tau'}) \right)}.			
	\end{align}
\end{theorem}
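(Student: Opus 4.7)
The plan is to establish the bound in three stages, following the classical DeVore-Popov framework for mixed-norm estimates of best piecewise polynomial approximants. Lemma \ref{L12.1} already handles the diagonal endpoint $\tau=p$, $\tau'=p'$, so it remains to prove the opposite endpoint $\tau=\tau'=\infty$ and then interpolate to cover the intermediate range.

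For the $L^\infty$ endpoint, I fix $(x,t)\in I\times I'$ with $I\in\D_k$, $I'\in\I_{k'}$, and form the nested chain of dyadic cuboids $J_j\times J_{j'}\ni(x,t)$, $J_j\in\D_j$, $J_{j'}\in\I_{j'}$, for all $j\ge k$, $j'\ge k'$. Telescoping gives $f-P_{I,I'}=\sum_{j\ge k}\sum_{j'\ge k'}Q_{j,j'}$, where $Q_{j,j'}:=P_{J_{j+1},J_{j'+1}}-P_{J_j,J_{j'}}\in\P_{r,r'}$. Each $Q_{j,j'}$ is polynomial on $J_{j+1}\times J_{j'+1}$, so the normalized polynomial norm equivalence \eqref{12.10}--\eqref{12.11} lets me trade $L^\infty$ for the mixed $L^{p'}(L^p)$ norm at the price of $2^{jd/p}2^{j'/p'}$. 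The near-best approximation property \eqref{12.8}--\eqref{12.9}, applied after passage to the dyadic parent, bounds $\|Q_{j,j'}\|_{L^{p'}(J_{j'};L^p(J_j))}$ by $C\,\E_{r,r'}(f,J_j\times J_{j'})_{p,p'}$, and Whitney \eqref{12.5} converts this into $\tilde\omega_{r,r'}(f,2^{-j}\times 2^{-j'})_{L^{p'}(L^p)}\lesssim|f|_{\mathfrak{B}}\,2^{-(js+j'\theta)}$ via the Besov characterization. The resulting double series is the product of two geometric sums with ratios $2^{-(s-d/p)}$ and $2^{-(\theta-1/p')}$, both strictly less than one under the hypotheses $s>d/p$ and $\theta>1/p'$, so summation produces
\begin{equation*}
\|f-S_{k,k'}\|_{L^\infty(\Omega_T)}\lesssim|f|_{\mathfrak{B}}\,2^{-(k(s-d/p)+k'(\theta-1/p'))}.
\end{equation*}

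To cover general $p\le\tau\le\infty$, $p'\le\tau'\le\infty$, I interpolate between the two endpoint estimates by a Hölder-type factorization: pointwise $|f-S_{k,k'}|^\tau\le\|f-S_{k,k'}\|_{L^\infty}^{\tau-p}|f-S_{k,k'}|^p$ inside the spatial integral, then $|f-S_{k,k'}(\cdot,t)|_{L^\tau}^{\tau'-p'\tau/p}\le C\|f-S_{k,k'}\|_{L^\infty}^{\tau'-p'\tau/p}$ inside the temporal integral, leaves a residual $\|f-S_{k,k'}\|_{L^{p'}(L^p)}^{p'}$ controlled by Lemma \ref{L12.1}. Collecting the $L^\infty$ factors against the appropriate powers and taking the $\tau'$-th root yields exactly $2^{-(k(s-d/p+d/\tau)+k'(\theta-1/p'+1/\tau'))}$. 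The main obstacle is the telescoping step: one must verify that the near-best constant of $P_{J_j,J_{j'}}$ (originally optimal on its own cell) survives the passage to the neighbouring finer cell via \eqref{12.9}, and that the inverse inequality \eqref{12.11} has constants independent of the scale indices $j,j'$, so that the double geometric series converges uniformly in $(x,t)$, $k$, and $k'$ and produces absolute constants in the final bound.
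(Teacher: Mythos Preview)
Your $L^\infty$ endpoint argument is essentially the paper's Case~1 and is correct in spirit, though the telescoping identity $f-P_{I,I'}=\sum_{j\ge k}\sum_{j'\ge k'}Q_{j,j'}$ with $Q_{j,j'}:=P_{J_{j+1},J_{j'+1}}-P_{J_j,J_{j'}}$ does not hold as written; a double sum requires the second-difference form $Q_{j,j'}=P_{J_j,J_{j'}}-P_{J_{j-1},J_{j'}}-P_{J_j,J_{j'-1}}+P_{J_{j-1},J_{j'-1}}$ (this is the paper's $R_{j,j'}$), or else a one-parameter diagonal telescope.

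The real gap is in the interpolation step. A H\"older factorization between the two endpoints $L^\infty(\Omega_T)$ and $L^{p'}(0,T;L^p(\Omega))$ carries only a \emph{single} parameter: whatever manipulations you perform, you end up with
\[
\|f-S_{k,k'}\|_{L^{\tau'}(L^\tau)}\le C\,A^{1-\mu}B^{\mu},\qquad A:=\|f-S_{k,k'}\|_{L^\infty},\ B:=\|f-S_{k,k'}\|_{L^{p'}(L^p)},
\]
for some $\mu\in[0,1]$ (your scheme produces $\mu=p'/\tau'$). The resulting exponent is $k\bigl(s-\tfrac{d}{p}(1-\mu)\bigr)+k'\bigl(\theta-\tfrac{1}{p'}(1-\mu)\bigr)$, and no single $\mu$ matches $k(s-\tfrac{d}{p}+\tfrac{d}{\tau})+k'(\theta-\tfrac{1}{p'}+\tfrac{1}{\tau'})$ unless $\tau/p=\tau'/p'$. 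For a concrete failure, take $\tau=p$ but $\tau'>p'$: the target $k$-exponent is $s$, while your bound gives $s-\tfrac{d}{p}(1-\tfrac{p'}{\tau'})<s$, which is strictly weaker. Two independent target indices $(\tau,\tau')$ cannot be reached from two endpoints.

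The paper fixes this by never interpolating $f-S_{k,k'}$ globally. It writes $f-S_{k,k'}=\sum_{j>k}\sum_{j'>k'}R_{j,j'}$ and, because each $R_{j,j'}$ is piecewise polynomial on $\D_j\times\I_{j'}$, applies the normalized norm equivalence \eqref{12.10} \emph{cell by cell}:
\[
\|R_{j,j'}\|_{L^{\tau'}(I';L^\tau(I))}\lesssim |I|^{\frac{1}{\tau}-\frac{1}{p}}\,|I'|^{\frac{1}{\tau'}-\frac{1}{p'}}\,\|R_{j,j'}\|_{L^{p'}(I';L^p(I))}.
\]
Here the space and time scalings decouple automatically, producing exactly the factor $2^{jd(1/p-1/\tau)}2^{j'(1/p'-1/\tau')}$; summing over cells via the subadditivity \eqref{12.6} and then over $j,j'$ gives the claimed rate. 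Your argument is repaired by moving the norm change inside the telescoping sum, applying it to each polynomial piece before summing rather than to the full error afterward.
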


\begin{proof}
	Let us fix any \( f \in \BV \) and consider the corresponding \( S_{k,k'} = S_{k,k'}(f) \), see \eqref{12.13}.  
	It was proven in Lemma \eqref{L12.1} that  
	\[\|f - S_{k,k'}\|_{L^{p'}(0,T;L^p(\Omega))} \leq C |f|_{\mathfrak{B}} 2^{-(k s+k'\theta)}, \quad k,k' \geq 0.\]
	Let $R_{0,0} := S_{0,0}$ and for each $(x,t) \in \OMT$, we define
	\begin{align}\label{12.18}
		R_{k,k'} (x,t) &:= S_{k,k'} (x,t) - S_{k-1,k'}(x,t) - S_{k,k'-1} (x,t) - S_{k-1,k'-1} (x,t), \quad k,k' \geq 1,\notag\\
		R_{k,0}(x,t) &\hspace{-1mm}:= S_{k,0}(x,t) - S_{k-1,0}(x,t),\,
		R_{0,k'}(x,t) \hspace{-1mm}:= S_{0,k'}(x,t) - S_{0,k'-1}(x,t)\ k, k'\geq 1.\notag
	\end{align}
	The functions $R_{k,k'}$ are defined for all $(x,t) \in \bar\Omega\times [0,T]$ and are in $S_{k,k'} , k,k'\geq 0$,
	\begin{align}\label{12.19}
		\|R_{k,k'}\|_{L^{p'}(0,T;L^p(\Omega))} &\leq C \big[ \|f - S_{k,k'}\|_{L^{p'}(0,T;L^p(\Omega))} + \|f - S_{k-1,k'}\|_{L^{p'}(0,T;L^p(\Omega))} \notag\\&\hspace{0.5cm}+  \|f - S_{k,k'-1}\|_{L^{p'}(0,T;L^p(\Omega))} +  \|f - S_{k-1,k'-1}\|_{L^{p'}(0,T;L^p(\Omega))} \big] \notag\\ 
		&\leq C |f|_{\mathfrak{B}} 2^{-(k s+k'\theta)}, \quad k,k' \geq 1,
	\end{align}
	and of course $\|R_{0,0}\|_{L^{p'}(0,T;L^p(\Omega))} \leq C \|f\|_{L^{p'}(0,T;L^p(\Omega))}$. It follows that
	\begin{align}\label{12.20}
		f = \sum_{k'=0}^\infty\sum_{k=0}^{\infty} R_{k,k'},
	\end{align}
	with the series converging in $L^{p'}(0,T;L^p(\Omega))$. Consider the following cases for $\tau, \tau'$.\\
	\textbf{Case 1: $\tau, \tau' = \infty$,} For every dyadic cuboid $I\times I' \in \D_k\times \I_{k'}$ , we have
	\[ R_{k,k'} (x) = Q_{I,I'} (x) := P_{I,I'} (x,t) - P_{\bar I,\bar I'} (x,t), \quad (x,t) \in I\times I',\]
	where $(\bar I , \bar I') \in \D_{k-1}\times  \I_{k-1}$. From Whitney’s theorem and \eqref{12.11}, we have 
	\begin{align}\label{12.21}
		\|Q_{I,I'}\|_{C(I\times I')} &\leq C |I|^{1/p} |I'|^{-1/p} \|Q_{I,I'}\|_{\Lp} \notag\\&\leq C 2^{kd/p+k'/p'} \left[ \|f - P_{I,I'}\|_{\Lp} + \|f - P_{\bar I,\bar I'}\|_{\Lp} \right]\notag\\
		&\leq C 2^{(kd/p+k'/p')} \omega_{r,r'}(f, 2^{-k+1}\times 2^{-k'+1})_{\Lp}.
	\end{align}
	Thus for every $(x,t) \in \bar \Omega\times [0,T]$, we have
	\begin{align}\label{12.22}
		|R_{k,k'}(x,t)| &\leq C 2^{(kd/p+k'/p')} \omega_{r,r'}(f, 2^{-k+1}\times 2^{-k'+1})_{L^{p'}(0,T;L^p(\Omega))}\notag\\ &\leq C |f|_{\mathfrak{B}} 2^{-k(s - d/p)-k'(\theta-1/p')}, \quad k,k' \geq 1.
	\end{align}
	The series \eqref{12.20} converges in \( L^\infty(0,T;L^\infty(\Omega)) \) and also pointwise to a limit function \( f \), and for each \( (x,t) \in \OMT,\  k,k' \geq 0 \), we have  
	\begin{align}\label{12.23}
		|\tilde f - S_{k,k'} (x,t)| \leq \hspace{-1mm}\sum_{j'>k'}^{}\sum_{j>k}^{}|R_{j,j'} (x,t)| \leq C |f|_{\BV} 2^{-k(s - \frac{d}{p})-k'(\theta-\frac{1}{p'})},			
	\end{align}
	which proves the theorem for \( \tau = \tau' = \infty \).\\
	\textbf{Case 2: $\tau, \tau' <\infty$,} As in \eqref{12.21} and polynomial norms comparison of \eqref{12.11} gives
	\begin{align*}
		\|R_{k,k'}&\|^{\tau'}_{L^\tau (0,T;L^\tau(\Omega))}= \int_{0}^{T}\left[\int_{\Omega}^{}| R_{k,k'} |^{\tau}\ dx\right]^{\frac{\tau'}{\tau}}dt = \sum_{I'\in \I_{k'}}^{} \int_{I'}^{}\left[\sum_{I \in \D_k}  \int_{I}^{}| Q_{I,I'} |^{\tau}dx\right]^{\frac{\tau'}{\tau}}dt\\ 
		&\leq2^{d\tau'/p} C^{\tau'} 2^{kd\tau'(\frac{1}{p} - \frac{1}{\tau})} \sum_{I'\in \I_{k'}}^{} \int_{I'}^{}\|\Delta_{h}^{r}\Delta_{h'}^{r'} f(t)\|^{\tau'}_{L^p(\Omega)}\\
		&\leq2^{\tau'(\frac{d}{p}+\frac{1}{p'})} C^{\tau'} 2^{\tau'\left(kd(\frac{1}{p} - \frac{1}{\tau})+k'(\frac{1}{p'} - \frac{1}{\tau'})\right)}  \tilde w_{r,r'} (f, 2^{-k+1}\times 2^{-k'+1})^{\tau}_{ L^{p'}(0,T;L^p(\Omega)}.
	\end{align*}
	In other words, for any $\tau \geq p$ and $\tau'\geq p'$, we have
	\begin{align*}\label{12.24}
		\|R_{k,k'}\|_{L^{\tau'} (0,T;L^\tau(\Omega))} \leq C\, 2^{\left(-(ks+k'\theta)+kd(\frac{1}{p} - \frac{1}{\tau})+k'(\frac{1}{p'} - \frac{1}{\tau'})\right)}  |f|_{\mathfrak{B}}, \, k,k' \geq 1.
	\end{align*}
	Since \( f - S_{k,k'} = \sum_{j' > k'}\sum_{j > k} R_{j,j'} \),
	\begin{align*}
		\|f - S_{k,k'}\|_{L^\tau (0,T;L^\tau(\Omega))}& \leq  \sum_{j' > k'}\sum_{j > k} \|R_{j,j'}\|_{L^\tau (0,T;L^\tau(\Omega))}\\
		&\leq C |f|_{\mathfrak{B}} 2^{-\left(k(s - \frac{d}{p} + \frac{d}{\tau}) + k'(\theta -\frac{1} {p'} + \frac{1} {\tau'})\right)},
	\end{align*}
	which completes the proof in this case.
\end{proof}
\begin{remark}
	For  \( C \) independent of \( I, I' \), Theorem~\eqref{thm_12.2} also applies to general \( S_{k,k'} \in \S_{k,k'} \) with
	\begin{align}\label{12.25}
		\|f - P_{I,I'}\|_{L^{\tau'} (I';L^\tau(I))} \leq C 2^{k d(\frac{1}{p} - \frac{1}{\tau}) + k'(\frac{1}{p'} - \frac{1}{\tau'})} \tilde w_{r,r'} (f, 2^{-(k+k')})_{\Lp}.
	\end{align}
\end{remark}
\section*{Data availability}
The datasets produced during this study will be available from the corresponding author upon reasonable request.

\bibliographystyle{plain}
\bibliography{reference_2n}
\end{document}